\documentclass[11pt,a4paper]{article}
\usepackage[cp1251]{inputenc}
\usepackage{amsmath,amsthm}
\usepackage{amsfonts,amstext,amssymb,verbatim,epsfig}
\usepackage{dsfont}
\usepackage{enumerate}
\usepackage{psfrag}
\usepackage{graphicx}

\oddsidemargin -8mm
\evensidemargin -8mm
\topmargin -4mm
\textheight 650pt
\textwidth 500pt

\sloppy 
\def\R{{\mathbb{R}}}
\def\N{{\mathbb{N}}}
\def\Z{{\mathbb{Z}}}

\newcommand{\GG}{\mathbb{G}}
\renewcommand{\P}{\mathbb{P}}

\newcommand{\ballZ}{\mathrm{B}} 

\def\badseed{{\overline G}}

\def \p{{\bf P1}} 
\def \pp{{\bf P2}} 
\def \ppp{{\bf P3}} 
\def \s{{\bf S1}} 
\def \sss{{\bf S2}}

\def \atom{\xi} 
\def \set{{\mathcal S}}
\def \distS{{\rho_{\scriptscriptstyle \set}}}
\def \ballS{{\ballZ_{\scriptscriptstyle \set}}}

\def \ballSZ{{\widetilde \ballZ_{\scriptscriptstyle \set_\infty}}}
\def \distSZ{{\widetilde \rho_{\scriptscriptstyle \set_\infty}}}
\def \constP{{\chi_{\scriptscriptstyle {\mathrm P}}}}
\def \constS{{\Delta_{\scriptscriptstyle {\mathrm S}}}}
\def \funcP{f_{\scriptscriptstyle {\mathrm P}}}
\def \funcS{f_{\scriptscriptstyle {\mathrm S}}}
\def \RP{R_{\scriptscriptstyle {\mathrm P}}}
\def \RS{R_{\scriptscriptstyle {\mathrm S}}}
\def \LP{L_{\scriptscriptstyle {\mathrm P}}}
\def \epsP{{\varepsilon_{\scriptscriptstyle {\mathrm P}}}}

\def \scexp{{\theta_{\scriptscriptstyle {\mathrm sc}}}}

\newcommand{\chemdist}[1]{\rho_{#1}}
\def \chemconstt{{\mathrm C}_{\scriptscriptstyle \rho}} 
\def \shaperad{R} 

\newcommand{\ballR}{ \mathrm{Q} } 

\def \binlog{\log} 

\newtheorem{theorem}{Theorem}[section]
\newtheorem{corollary}[theorem]{Corollary}
\newtheorem{lemma}[theorem]{Lemma}
\newtheorem{proposition}[theorem]{Proposition}

\newtheoremstyle{likedef}
  {}%
  {}%
  {}%
  {\parindent}%
  {\bfseries}%
  {.}%
  {.5em}%
  {}%

\theoremstyle{likedef}
\newtheorem{definition}[theorem]{Definition}
\newtheorem{remark}[theorem]{Remark}

\numberwithin{equation}{section}

\begin{document}
\title{On chemical distances and shape theorems in percolation models with long-range correlations}

\author{
Alexander Drewitz
\thanks{Columbia University, Department of Mathematics, RM 614, MC 4419,
2990 Broadway, New York City, NY 10027, USA. email: drewitz@math.columbia.edu}
\and
Bal\'azs R\'ath
\thanks{
The University of British Columbia, Department of Mathematics,
Room 121, 1984 Mathematics Road, Vancouver, B.C.
Canada V6T 1Z2. email: rathb@math.ubc.ca}
\and
Art\"{e}m Sapozhnikov
\thanks{
University of Leipzig, Department of Mathematics, Room A409, 
Augustusplatz 10, 04109 Leipzig, Germany.
email: artem.sapozhnikov@math.uni-leipzig.de}
}

\maketitle

\footnotetext{MSC2000: Primary 60K35, 82B43.}
\footnotetext{Keywords: Chemical Distance, Shape Theorem, Percolation, Long-Range Correlations, Random Interlacements, Gaussian Free Field.}

\begin{abstract}
In this paper we provide general conditions on a one parameter family of random infinite subsets of $\Z^d$ 
to contain a unique infinite connected component for which the chemical distances are comparable to the Euclidean distance.
In addition, we show that these conditions also imply a shape theorem for the corresponding infinite connected component.

By verifying these conditions for specific models, we obtain novel results about
 the structure of the infinite 
connected component of the vacant set of random interlacements and the 
level sets of the Gaussian free field. 
As a byproduct we  obtain alternative proofs to the corresponding results for random interlacements
in \cite{CP}, and while our main interest
is in percolation models with long-range correlations, we also
recover results in the spirit of \cite{antal_pisztora} for Bernoulli percolation.

Finally, as a corollary, we derive new results about the (chemical) diameter of the largest
connected component in the complement of the trace of the random walk on the torus. 
\end{abstract}

\bigskip
\bigskip
\bigskip

Percolation was introduced in \cite{broadbent_hammersley} as a mathematical model of a porous medium, 
where the physical space is modeled by the lattice $\Z^d$, and the pure substance is described as a random 
subset $\set$ of $\Z^d$. 
A central question is to understand physical properties of $\set$, and how universal these properties are for various distributions of $\set$. 
Mathematically speaking, what can be said about the connectivity properties and the geometry of the subgraph of $\Z^d$ spanned by $\set$?
In this paper we consider the general
situation when the set $\set$ has a unique infinite connected component $\set_\infty$ 
which covers a positive fraction of $\Z^d$. 
We define the \emph{chemical distance} of $x,y \in \set_\infty$ to be the length of the shortest nearest neighbor path connecting $x$ and $y$ in $\set_\infty$, 
and we want to understand when the long-scale behavior of the resulting random metric space is similar to that of the underlying space $\Z^d$.

In the case of supercritical Bernoulli percolation (when every vertex of $\Z^d$ is in $\set$ with probability $p>p_c$ independently of each other), 
\cite[Theorem 1.1]{antal_pisztora} asserts the existence of a constant $C=C(d,p)$ such that 
the probability that the chemical distance of  $x, y \in \set_\infty$ is greater than $C  |x-y|$ decays 
exponentially
as $|x-y| \to \infty$. 
Using this control on chemical distances as well as Kingman's subadditive ergodic theorem one can deduce a \emph{shape theorem} 
which states that the chemical ball of radius $R$ in $\set_\infty$, rescaled by $R$, converges almost surely with respect to the Hausdorff distance 
to a deterministic compact convex set of $\R^d$ as $R \to \infty$ (see, e.g., \cite[Corollary 5.4]{garet_marchand}).

 In this paper we extend the shape theorem from the supercritical
 Bernoulli setting to a general class of correlated percolation models, by
 proving novel results about chemical distances in these models. In our
 setup, we deal with a one-parameter family of probability measures $\mathbb
 P^u$ (describing the law of $\set \subseteq \Z^d$ at different densities)
 satisfying the following conditions:

\begin{itemize}
\item \p{}: spatial ergodicity;

\item \pp{}: stochastic monotonicity in $u$;

\item \ppp{}: weak decorrelation for monotone events;

\item \s{}: local uniqueness of $\set_\infty$;

\item \sss{}: continuity of the density of $\set_\infty$ in $u$;

\end{itemize}

see Section \ref{sec:modRes} for the rigorous definitions and
 our general results
Theorem \ref{thm:Sinfty:chemdist} 
(which states that chemical distance on $\set_\infty$ is comparable to Eucledian distance)
 and Theorem \ref{thm:shapeThm} (the shape theorem).

As we will show, the above conditions are general enough to be satisfied by many models, including 
random interlacements, the vacant set of random interlacements, and the level sets of the Gaussian free field (see Sections~\ref{sec:ri}, \ref{sec:vsri}, and \ref{sec:gff}, respectively). 
In our context, the last two models are particularly challenging since, in contrast to random interlacements,
 they exhibit percolation phase transitions. For the time being we cannot deal with their 
entire supercritical phases. However, our framework allows us to identify the only condition that is missing:
 as
soon as \s{} is extended up to criticality, our results will also hold for the entire supercritical phases. 
Let us also emphasize here that the above 
models give rise to random subsets of $\Z^d$ with {\it polynomial decay 
of correlations}, but still they satisfy the weak decorrelation 
condition \ppp{}, as we show in Sections~\ref{sec:ri}, \ref{sec:vsri}, and \ref{sec:gff}. 

\medskip

Historically, the asymptotic linear behaviour of chemical distances along rays in supercritical
 Bernoulli percolation was proved for all $d \geq 2$ and  $p>p_c$ 
  in \cite[(5.5)]{GM90}. In order to deduce the shape theorem, one also needs
 a uniform control of the chemical distances in large balls: 
 the exponentially decaying bound (for all $p>p_c$) of \cite[Theorem 1.1]{antal_pisztora} improved 
 the polynomially decaying bound (for sufficiently high $p$) of \cite[Lemma 2.8]{Gartner_Molchanov}.
Corresponding properties of the chemical distance on the random interlacement $\mathcal{I}^u$ at level $u$
 have recently been derived in \cite[Theorems 1.1 and 1.3]{CP}.
However, the techniques of \cite{CP} heavily 
rely on the specific connectivity properties of  $\mathcal{I}^u$,
and in particular, the question of whether these results hold for the vacant set 
$\mathcal{V}^u= \Z^d \setminus \mathcal{I}^u$  of random interlacements also
had remained unsolved (and similarly for the level sets of Gaussian free field).
In addition to answering these questions positively, our method also
provides a general, model independent approach. This allows for a possible application our results
to other percolation models by checking the conditions \p{} -- \ppp{} and \s{} -- \sss{}
in order to derive chemical distance results and shape theorems for them.
In this context, it would for example be interesting to see whether the random cluster model
(see \cite{GrimmettCluster} for a reference) satisfies all of the above conditions.

Another potential advantage of our general framework is that it seems to capture the key
 properties (i.e., \p{} -- \ppp{} and \s{} -- \sss{})  of a dependent percolation model which imply that the geometry of $\set_\infty$ is similar to that of $\Z^d$. In particular, recent progress
indicates that our conditions imply that simple random walk on $\set_\infty$ behaves similarly to random walk
 on $\Z^d$, see Remark \ref{remark_isoperimetry} for further discussion.

Finally, using a connection between random interlacements and random walk on the discrete torus, 
we obtain new results about the (chemical) diameter of the giant connected component in the complement of the random walk trace (see Section~\ref{sec:rwtorus}).

\section{Model and results} \label{sec:modRes}

We consider a one parameter family of probability measures $\mathbb P^u$, $u\in(a,b)\subseteq \R_+$, on the measurable space $(\{0,1\}^{\Z^d},\mathcal F)$, $d\geq 2$, 
where the sigma-algebra $\mathcal F$ is generated by the canonical coordinate maps $\Psi_x : \{0,1\}^{\Z^d}\to \{0,1\}$, $x\in\Z^d$ 
(i.e., $\Psi_x(\atom) = \atom_x$ for $\atom\in\{0,1\}^{\Z^d}$ and $x\in\Z^d$).

For any $\atom\in\{0,1\}^{\Z^d}$, we define 
\[
\set = \set(\atom) = \{x\in\Z^d~:~\xi_x = 1\} \subseteq \Z^d .\
\]
We view $\set$ as a subgraph of $\Z^d$ in which the edges are drawn between any two vertices of $\set$ within $\ell^1$-distance $1$ from each other. 
(For $x=(x^{(1)},\dots,x^{(d)})\in \R^d$, the $\ell^1$-norm of $x$ is defined in the usual way by $|x|_1 = \sum_{i=1}^d|x^{(i)}|$.)
We denote by $\set_\infty$ the subset of vertices of $\set$ which are in infinite connected components of $\set$. 

In this paper we will focus on connectivity properties of $\set_\infty$. 
We will prove that under certain conditions (see \p{} -- \ppp{} and \s{} -- \sss{}) on the family of probability measures $\mathbb P^u$, $u\in(a,b)$, 
the graph $\set_\infty$ is connected and ``looks like'' the underlying lattice $\Z^d$.
Roughly speaking, we show that on large scales the graph distance in $\set_\infty$ behaves like a metric induced by a norm on $\R^d$.

We will now define conditions on the family $\mathbb P^u$, $u\in(a,b)$. 
After that we will state the main results (Theorems \ref{thm:Sinfty:chemdist} and \ref{thm:shapeThm}) of the paper. 
Particular examples of probability measures satisfying the conditions (and results) below will be given in Section~\ref{sec:examples}. Note that our aim was to formulate our conditions in a general and flexible form, in order to facilitate the extension of the main results of this paper to other percolation models.

The numbers $0\leq a<b$ as well as the dimension $d\geq 2$ are going to be fixed throughout the paper, 
and we omit the dependence of various constants on $a$, $b$, and $d$. 
\begin{itemize}
\item[\p{}]
For any $u\in(a,b)$, $\mathbb P^u$ is invariant and ergodic with respect to the lattice shifts.
\end{itemize}
To state the next two conditions we need the following definition. 
\begin{equation*}
\begin{array}{c}
\text{An event $G \in \mathcal F$ is called \emph{increasing} (respectively, \emph{decreasing}), if}\\ 
\text{for all $\atom\in G$ and $\atom' \in \{0,1\}^{\Z^d}$ with $\atom_y \leq \atom_y'$ (respectively, $\atom_y \geq \atom_y'$) for all $y\in\Z^d$, one has $\atom' \in G$.}
\end{array}
\end{equation*}
\begin{itemize}
\item[\pp{}]
For any $u,u'\in(a,b)$ with $u<u'$, and any increasing event $G\in\mathcal F$, 
$\mathbb P^u[G] \leq \mathbb P^{u'}[G]$.
(Usually, this condition is referred to as stochastic monotonicity of $\mathbb P^u$.)
\end{itemize}
Conditions \p{} and \pp{} are rather general and are satisfied by many families of probability measures. 
The condition \ppp{} below is more restrictive than the above two. It states that $\mathbb P^u$, $u\in(a,b)$, satisfy a certain weak decorrelation inequality. 
For $x \in \Z^d$ and $r \in \R_+$, we denote by 
\[\ballZ(x,r) = \{y\in\Z^d~:~|x-y|_\infty\leq \lfloor r \rfloor \}\] the closed $l^{\infty}$-ball in $\Z^d$ with 
radius $\lfloor r \rfloor$ and center $x$. 
(For $x=(x^{(1)},\dots,x^{(d)})\in \R^d$, the $\ell^\infty$-norm of $x$ is defined by $|x| = |x|_\infty = \max\{|x^{(1)}|,\ldots|x^{(d)}|\}$.)
We write $\ballZ(r)$ for $\ballZ(0,r)$. 

\medskip

\begin{itemize}
\item[\ppp{}]
Let $L\geq 1$ be an integer. Let $x_1,x_2\in\Z^d$. 
For $i\in\{1,2\}$, let $A_i\in\sigma(\Psi_y~:~y\in \ballZ(x_i,10L))$ be decreasing events, and 
$B_i\in\sigma(\Psi_y~:~y\in \ballZ(x_i,10L))$ increasing events. 
There exist $\RP,\LP <\infty$ and $\epsP,\constP>0$ such that for any integer $R\geq \RP$ and $a<\widehat u<u<b$ satisfying 
\begin{equation}\label{eq:uwidehatu}
u\geq \left(1 + R^{-\constP}\right)\cdot \widehat u ,\
\end{equation}
if $|x_1 - x_2|_\infty \geq R\cdot L$, then 
\begin{equation}\label{eq:decorrelation:decreasing}
\mathbb P^u\left[A_1\cap A_2\right] \leq 
\mathbb P^{\widehat u}\left[A_1\right] \cdot
\mathbb P^{\widehat u}\left[A_2\right] 
+ e^{-\funcP(L)} ,\
\end{equation}
and
\begin{equation}\label{eq:decorrelation:increasing}
\mathbb P^{\widehat u}\left[B_1\cap B_2\right] \leq 
\mathbb P^u\left[B_1\right] \cdot
\mathbb P^u\left[B_2\right] 
+ e^{-\funcP(L)} ,\
\end{equation}
where $\funcP$ is a real valued function satisfying
\begin{equation}\label{def:epsP}
\text{$\funcP(L) \geq e^{(\log L)^\epsP}$ for all $L\geq \LP$.}
\end{equation}
\end{itemize}
\begin{remark}\label{rem:decorrelation}
At first sight it may look like condition \ppp{} is rather strong and asserts that the correlations should decay 
much faster than any polynomial ($e^{\funcP(L)} \gg L^k$ for any $k$). 
However, the fact that we deal with different parameters $u$ and $\widehat u$ (and a very restricted class of events) allows for measures with polynomial decay of correlations to satisfy \ppp{}. 
Examples of such families of measures for $d\geq 3$ are random interlacements and the level sets of the Gaussian free field, for which  
\[
\left|\mathbb P^u[\Psi_x = 1, \Psi_y = 1] - \mathbb P^u[\Psi_x = 1]\mathbb P^u[\Psi_y = 1]\right| \asymp (1+|x-y|)^{2-d}, \qquad x,y\in\Z^d .\
\]
We will discuss these examples in more details in Sections~\ref{sec:ri} -- \ref{sec:gff}. 
In the literature, the idea behind the proof of inequalities of the form \eqref{eq:decorrelation:decreasing} and \eqref{eq:decorrelation:increasing} is often referred to as ``sprinkling''.
\end{remark}

\bigskip

While \p{} -- \ppp{} are general conditions on the family $\mathbb P^u$, $u\in(a,b)$, the next condition \s{} pertains to the connectivity properties of $\set$. 
It can be understood as a certain local uniqueness property for macroscopic connected components in $\set$. 
Roughly speaking, this condition implies that with high probability, large enough boxes intersect exactly one connected component of $\set$ with large diameter.

\begin{equation}\label{def:setr}
\begin{array}{c}
\text{For $r\in [0,\infty]$, we denote by $\set_r$, the set of vertices of $\set$}\\
\text{which are in connected components of $\set$ of ($\ell^1$-)diameter $\geq r$.}
\end{array}
\end{equation} 
In particular, in the case $r=\infty$ we recover the set $\set_\infty$ of vertices of $\set$ contained in infinite connected components of $\set$.

\medskip

\begin{itemize}
\item[\s{}] 
There exists a function $\funcS:(a,b)\times\Z_+\to \mathbb R$ such that 
\begin{equation}\label{eq:funcS}
\begin{array}{c}
\text{for each $u\in(a,b)$, there exist $\constS = \constS(u)>0$ and $\RS = \RS(u)<\infty$}\\
\text{such that $\funcS(u,R) \geq (\log R)^{1+\constS}$ for all $R\geq \RS$,}
\end{array}
\end{equation}
and for all $u\in(a,b)$ and $R\geq 1$, the following inequalities are satisfied:
\begin{equation*} 
\mathbb P^u\left[ \, 
\set_R\cap\ballZ(R) \neq \emptyset \,
\right]
\geq 
1 - e^{-\funcS(u,R)} , 
\end{equation*}
and
\begin{equation} \label{eq:locUnProb}
\mathbb P^u\left[
\bigcap_{x,y\in\set_{R/10}\cap\ballZ(R)}
\left\{ \,
\text{$x$ is connected to $y$ in }
\set\cap\ballZ(2R) \, 
\right\}
\right]
\geq 1 - e^{-\funcS(u,R)} . 
\end{equation}
\end{itemize}

\medskip

It is not difficult to see that if the family $\mathbb P^u$, $u\in(a,b)$, satisfies \s{}, then for any $u\in(a,b)$, 
\begin{equation}\label{eq:Sinfty}
\text{
$\mathbb P^u$-a.s., the set $\set_\infty$ is non-empty and connected,
}
\end{equation}
and there exist $c_{\scriptscriptstyle 1}=c_{\scriptscriptstyle 1}(u)>0$ and $C_1 = C_1(u)<\infty$ such that for all $R\geq 1$, 
\begin{equation}\label{eq:C1:infty}
\mathbb P^u\left[ \,
\set_\infty\cap\ballZ(R) \neq \emptyset \, 
\right]
\geq 
1 - C_1 e^{-c_{\scriptscriptstyle 1}\funcS(u,R)} . 
\end{equation}
Moreover, using a standard covering argument, if the family $\mathbb P^u$, $u\in(a,b)$, is invariant under the lattice shifts and satisfies \s{}, then 
for any $u\in(a,b)$ and $\varepsilon>0$, 
there exist $c_{\scriptscriptstyle 2}=c_{\scriptscriptstyle 2}(u,\varepsilon)>0$ and $C_2 = C_2(u,\varepsilon)<\infty$ such that 
for all $R\geq 1$, 
\begin{equation}\label{eq:C2:epsilon}
\mathbb P^u\left[
\bigcap_{x,y\in\set_{\varepsilon R}\cap\ballZ(R)}
\left\{ \,
\text{$x$ is connected to $y$ in }\set\cap\ballZ((1+\varepsilon)R) \,
\right\}
\right]
\geq 1 - C_2 e^{-c_{\scriptscriptstyle 2}\funcS(u,R)} .\
\end{equation}

We omit the proof of \eqref{eq:C2:epsilon} and
 refer the reader to the derivation of Proposition~1 from Lemma~13 in \cite{RS:Transience}, where essentially the same statement is proved.

\bigskip

Our final condition \sss{} on the measures $\mathbb P^u$, $u\in(a,b)$, concerns the density of $\set_\infty$, 
\begin{equation}\label{def:etau}
\eta(u) = \mathbb P^u\left[0\in\set_\infty\right] .\
\end{equation}

\medskip

\begin{itemize}
\item[\sss{}]
The function $\eta(\cdot)$ is positive and continuous on $(a,b)$. 
\end{itemize}
In fact, the positivity of $\eta(\cdot)$ already follows from \eqref{eq:Sinfty}, if we assume that $\mathbb P^u$, $u\in(a,b)$, are invariant with respect to lattice shifts and satisfy \s{}. 
Note that if we assume \pp{}, then 
\begin{equation*}
\text{$\eta(\cdot)$ is non-decreasing on $(a,b)$.}
\end{equation*}
\begin{remark}
Condition \sss{} is the price that we pay for asking for decorrelation inequalities only in the weak form of \ppp{} (see the proofs of Lemmas \ref{l:Auxk} and \ref{l:Buxk}). 
If the inequalities \eqref{eq:decorrelation:decreasing} and \eqref{eq:decorrelation:increasing} in \ppp{} were true for $\widehat u = u$, then we would not need the assumption \sss{}. 
However, as we discuss in Remark~\ref{rem:decorrelation}, there are many percolation models with long-range correlations for which 
\eqref{eq:decorrelation:decreasing} and \eqref{eq:decorrelation:increasing} fail to hold with $\widehat u = u$, but 
conditions \ppp{} and \sss{} are still valid. 
\end{remark}

The main result of our paper is the following theorem. 
For $x,y\in\set$, let $\distS(x,y)\in\Z_+\cup\{\infty\}$ denote the {\it chemical distance} (also known as the internal or graph distance) in $\set$ between $x$ and $y$, i.e., 
\begin{equation*}
\distS(x,y) = \inf \left\{ \, n \geq 0 \; : \; 
\begin{array}{c}
\text{there exist $x_0, \dots, x_n\in \set$ such that $x_0 = x$, $x_n = y$,}\\ 
\text{and $|x_k - x_{k-1}|_1=1$ for all $k=1,\dots, n$}
\end{array}
\right\}.
\end{equation*}
Here we use the usual convention $\inf \emptyset  = \infty$, i.e., we set $\distS(x,y) = \infty$ if 
$x$ and $y$ are in different connected components of $\set$.

\begin{theorem}[Chemical distance]\label{thm:Sinfty:chemdist}
Assume that the family of probability measures $\mathbb P^u$, $u\in(a,b)$, on $(\{0,1\}^{\Z^d},\mathcal F)$, $d\geq 2$, satisfies 
the conditions \p{} -- \ppp{} and \s{} -- \sss{}.
For any $u\in(a,b)$, there exist $c = c(u)>0$ and $C = C(u)<\infty$ such that for all $R\geq 1$, 
\begin{equation}\label{eq:Sinfty:chemdist}
\mathbb P^u\left[
\text{ for all $x,y\in\set_R\cap \ballZ(R)$, $\distS(x,y) \leq C R$ }
\right]\geq 1 - Ce^{-c(\binlog R)^{1+\constS}} ,\
\end{equation}
where $\constS$ comes from condition \s{} (see \eqref{eq:funcS}).
\end{theorem}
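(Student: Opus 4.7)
The plan is to combine a multi-scale renormalization with the weak decorrelation condition \ppp{}, anchoring the recursion at a base scale where \s{} supplies the needed small probability. I introduce a geometrically growing sequence $L_{k+1} = \ell_0 L_k$ and declare a scale-$k$ box $\ballZ(x, L_k)$ to be \emph{good} (at a parameter $u$) if the local uniqueness and existence events from \s{} hold for $\ballZ(x, L_k)$ and for a bounded collection of overlapping sub-boxes. Good boxes have two key features: by \s{} and a standard covering argument they host a unique macroscopic cluster, and any two large clusters in neighboring good boxes are joined inside a slightly enlarged box by a path of $\set$ of bounded Euclidean length, hence of chemical distance $\leq C L_k$.

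Next I track the \emph{bad} event at scale $k$, namely the failure of the chemical-distance conclusion ``every pair $x, y \in \set_{L_k/10} \cap \ballZ(x_0, L_k)$ satisfies $\distS(x, y) \leq C' L_k$ within $\set \cap \ballZ(x_0, 2L_k)$''. I would then carry out a \emph{cascading} step: if a box at scale $L_{k+1}$ is bad, then one can locate two scale-$L_k$ bad boxes at $\ell^\infty$-distance of order $L_{k+1}$ from each other, since a violating chemical path of length $> C' L_{k+1}$ must encounter two well-separated obstructions at the finer scale (otherwise a linear-in-$L_k$ number of connections between good scale-$L_k$ boxes would produce a path of length $\lesssim L_{k+1}$). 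The number of possible positions of such a pair is polynomial in $L_k$, so it is harmless.

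The parameters are chosen to match \ppp{}. Starting from the target $u \in (a, b)$, I pick $u_k \downarrow u_\infty$ satisfying $u_k \geq (1 + L_k^{-\constP}) u_{k+1}$, and pick $L_0$ large enough that $u_\infty \in (a, u)$. The decreasing form of \ppp{} then yields the recursion
\begin{equation*}
p_{k+1}(u_{k+1}) \leq C L_k^{2d} \, p_k(u_k)^2 + e^{-\funcP(L_k)},
\end{equation*}
where $p_k(v)$ is the bad probability at scale $L_k$ under $\mathbb{P}^v$. The base case is furnished by \s{}: at scale $L_0$ the bad probability is at most $C e^{-\funcS(u_0, L_0)} \leq Ce^{-(\log L_0)^{1+\constS}}$. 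Iterating the recursion preserves, up to constants, a bound of shape $Ce^{-c(\log L_k)^{1+\constS}}$, and since $\funcP(L) \geq e^{(\log L)^{\epsP}}$ dominates any such term, the $e^{-\funcP(L_k)}$ error is absorbed. A union bound over a covering of $\ballZ(R)$ by scale-$k$ boxes (with $L_k \asymp R$) then produces \eqref{eq:Sinfty:chemdist} at parameter $u_\infty$.

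Condition \sss{} is used to transfer the conclusion from $u_\infty$ back to $u$. Since $\eta(u_\infty)$ and $\eta(u)$ are both positive and close, together with \pp{} and ergodicity one shows that the infinite components at the two parameters share the same macroscopic structure on balls up to negligible probability, yielding the bound at $u$ itself. The main obstacle is the cascading step: one must arrange that the two extracted scale-$k$ bad events (i) are \emph{decreasing} and measurable with respect to $\ballZ(x_i, 10 L_k)$, (ii) are separated by at least $\RP L_k$, and (iii) cover all ways in which the scale-$k+1$ event can fail, while keeping the counting factor under control. Executing this decomposition cleanly — and showing that $u_\infty$ can be kept bounded away from $a$ uniformly through all scales — is the technical heart of the argument.
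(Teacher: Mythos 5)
Your plan shares the broad skeleton of the paper's proof (multi-scale renormalization, cascading, sprinkling driven by \ppp{}), but there are several gaps that would derail the argument as written.

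\textbf{Geometric scales are incompatible with \ppp{}.} You take $L_{k+1} = \ell_0 L_k$ with $\ell_0$ fixed, and in the cascading step the two scale-$L_k$ obstructions are separated by a distance of order $L_{k+1}$. But in \ppp{} the relevant quantity is the separation measured in units of the box size, $R = |x_1-x_2|/L \asymp \ell_0$, which is a \emph{constant}. The sprinkling factor is therefore $1+\ell_0^{-\constP}$, bounded away from $1$, so $\prod_k (1+\ell_0^{-\constP})$ diverges and no sequence $u_k$ can stay inside $(a,b)$. Your proposed constraint $u_k \geq (1+L_k^{-\constP})u_{k+1}$ does not match what \ppp{} demands. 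This is exactly why the paper uses super-geometric scales with two separate sequences $r_k$ and $l_k$ (see \eqref{def:scales}): the separation ratio $r_k$ must tend to infinity (so $\prod(1+r_k^{-\constP})$ converges, cf.\ \eqref{eq:condr0}), while $l_k=L_{k+1}/L_k$ grows even faster so that the two sub-boxes still fit inside the parent box.

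\textbf{The bad event is not monotone, and the cascading is not the right one.} You define the scale-$k$ bad event as the failure of a chemical-distance conclusion; this is neither increasing nor decreasing in $\atom$ (enlarging $\set$ adds vertices to $\set_{L_k/10}$, which can create \emph{more} pairs to check), so \ppp{} does not apply to it. You flag this yourself as ``the main obstacle'', but it is really the central structural choice of the paper: the seed events are split into an increasing part $A^u_x$ (existence of a dense cluster with good connectivity) and a decreasing part $B^u_x$ (an upper bound on density), and Theorem~\ref{thm:badseed:proba} is applied to each monotone family separately. Relatedly, your cascading step (``a violating chemical path must encounter two well-separated obstructions'') is a plausible heuristic but is not a proof; the paper instead \emph{defines} the cascading events abstractly via \eqref{def:cascading}, and separately proves a purely deterministic lemma (Lemma~\ref{l:fromGktoGk-1}) that a path of $k$-good vertices can be deformed into a comparably short path of $(k-1)$-good vertices. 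That decoupling between the probabilistic cascading and the geometric path-modification is what makes the argument go through.

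\textbf{The sprinkling direction and the role of \sss{} are reversed.} You start at $u_0 = u$ and decrease towards $u_\infty < u$, then propose to ``transfer the conclusion back to $u$'' using \sss{}. There is no valid transfer of a chemical-distance bound between two parameters via continuity of $\eta$ alone; stochastic monotonicity only dominates one way, and certainly does not preserve connectivity properties. In the paper, for increasing seed events one starts at $(1+\delta)u$ and the recursion descends towards $u$ from above (with $u_k \geq u$ for all $k$, cf.\ \eqref{eq:uinftyu}), so the bound one lands on is directly at the target level $u$; for decreasing seed events one starts at $(1-\delta)u$ and ascends. The actual role of \sss{} is different: it guarantees that the density thresholds in the definitions of $A^u_x$ and $B^u_x$ (which are expressed via $\eta(u)$) remain compatible with the sprinkled measure $\mathbb P^{(1\pm\delta)u}$, i.e.\ that $\eta((1\pm\delta)u)$ is close enough to $\eta(u)$ for the seed events to have probability tending to $1$ under the shifted parameter (this is the content of \eqref{eq:epsilondelta} and \eqref{eq:delta54}). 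No continuity argument at the level of the infinite cluster's ``macroscopic structure'' is used or needed.
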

\begin{remark}
If a probability measure $\mathbb P^u$ is invariant under the lattice shifts and satisfies \s{}, then 
using a union bound one can already derive a weak version of \eqref{eq:Sinfty:chemdist}. 
Indeed, for any $\varepsilon>0$, there exist $c = c(u,\varepsilon)>0$ and $C = C(u,\varepsilon)<\infty$ such that for all $R\geq 1$, 
\[
\mathbb P^u\left[
\text{ for all $x,y\in\set_R\cap \ballZ(R)$, $\distS(x,y) \leq C R^{1+\varepsilon}$ }
\right]\geq 1 - Ce^{-c(\binlog R)^{1+\constS}} .\
\]
However, in order to prove \eqref{eq:Sinfty:chemdist}, we need to quantify correlations in some way. 
In this paper we achieve this by assuming \ppp{}. 
\end{remark}

\medskip

Using standard methods, we can deduce from Theorem~\ref{thm:Sinfty:chemdist} a shape theorem for $\distS$-balls of $\set_\infty$. 
For $x\in\set$ and $r \ge 0$, we denote by $\ballS(x,r)$ the ball in $\set$ with center $x$ and radius $\lfloor r\rfloor$, i.e., 
\begin{equation*}
\ballS(x,r) = \left\{y\in\set~:~\distS(x,y)\leq r \right\} .\
\end{equation*}
The shape theorem states that large balls in $\set_\infty$ with respect to the metric $\distS$ after rescaling 
have an asymptotic deterministic shape. 

\begin{theorem} [Shape theorem] \label{thm:shapeThm}
Assume that the family of probability measures $\mathbb P^u$, $u\in(a,b)$, satisfies \p{} -- \ppp{} and \s{} -- \sss{}. 
Then for any $u\in(a,b)$ there exists a convex compact set $D_u = D_u(\mathbb P^u) \subset \R^d$
such that for each $\varepsilon \in (0,1)$, 
there exists a $\P^u [ \; \cdot \; \vert \, 0 \in \set_\infty]$-almost surely finite 
random variable $\tilde{\shaperad}_{\varepsilon,u}$ satisfying
\begin{equation*}
\forall \,
\shaperad \geq \tilde{\shaperad}_{\varepsilon,u} \; : \; 
\set_\infty \cap (1-\varepsilon)\shaperad \cdot D_u 
\subseteq \ballS (0,\shaperad) \subseteq
\set_\infty  \cap   (1+\varepsilon) \shaperad \cdot D_u .
\end{equation*}
\end{theorem}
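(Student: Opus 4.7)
Following the classical Cox--Durrett / Garet--Marchand strategy, I would apply Kingman's subadditive ergodic theorem to a regularized chemical distance to obtain a time constant along each lattice direction, extend the result to a norm $\mu$ on $\R^d$, and then upgrade the resulting pointwise limit to the uniform shape statement using Theorem~\ref{thm:Sinfty:chemdist}.

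\textbf{Regularization and Kingman.} Since the endpoints $0$ and $nx$ need not lie in $\set_\infty$ (making $\distS(0,nx)$ infinite), first regularize: fix a deterministic lexicographic tie-breaking rule and let $\pi(z,\omega)\in \set_\infty(\omega)$ be the $|\cdot|$-nearest point of $\set_\infty(\omega)$ to $z\in\Z^d$. By \eqref{eq:Sinfty} and \eqref{eq:C1:infty}, $\pi$ is $\P^u$-a.s.\ well defined and $|z-\pi(z)|$ has stretched-exponential tails. Set $\tilde\rho(y,z):=\distS(\pi(y),\pi(z))$. Fix $x\in\Z^d\setminus\{0\}$; the array $X_{m,n}:=\tilde\rho(mx,nx)$ is subadditive ($X_{0,n}\le X_{0,m}+X_{m,n}$ by the triangle inequality), shift-covariant (because of the covariance of $\pi$ under lexicographic tie-breaking), stationary under $\tau_x$ by \p{}, and integrable by Theorem~\ref{thm:Sinfty:chemdist} combined with the tail on $|z-\pi(z)|$. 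Kingman's theorem yields a random variable $\mu(x)\ge 0$ with $n^{-1}\tilde\rho(0,nx)\to\mu(x)$ both $\P^u$-a.s.\ and in $L^1$; that $\mu(x)$ is deterministic follows from the full $\Z^d$-ergodicity in \p{} since the limit is invariant under all $\tau_y$.

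\textbf{Norm and uniformization.} Homogeneity $\mu(kx)=k\mu(x)$ for $k\in\N$ is immediate, subadditivity of $\tilde\rho$ and ergodicity give $\mu(x+y)\le\mu(x)+\mu(y)$, and symmetry $\mu(-x)=\mu(x)$ comes from the invariance of $\Z^d$ under $z\mapsto -z$; thus $\mu$ is a seminorm on $\Q^d$. The two-sided bound $|x|\le \mu(x)\le C(u)|x|$ (lower from the trivial $\distS\ge|\cdot|_\infty$, upper from Theorem~\ref{thm:Sinfty:chemdist}) makes $\mu$ Lipschitz, so it extends to a genuine norm on $\R^d$. Set
\[
D_u:=\{y\in\R^d \, : \, \mu(y)\le 1\},
\]
which is convex, symmetric and compact. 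For the uniform statement, apply the directional convergence along a countable dense family of rational directions in $\partial D_u$ and use continuity of $\mu$. To control directions not in this family, invoke Theorem~\ref{thm:Sinfty:chemdist} on dyadic scales $R=2^k$: the tail $Ce^{-c(\log R)^{1+\constS}}$ from \eqref{eq:Sinfty:chemdist} is summable in $k$ (because $\constS>0$), so by Borel--Cantelli there is a $\P^u$-a.s.\ event on which, for all sufficiently large $k$, every two points of $\set_{2^k}\cap\ballZ(2^k)$ are joined in $\set$ by a chemical path of length at most $C\cdot 2^k$. Combining this uniform estimate with the directional limits and compactness of $\partial D_u$ delivers the two desired inclusions on the event $\{0\in\set_\infty\}$.

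\textbf{Main obstacle.} The genuine work is the final uniformization: patching the countably many pointwise Kingman limits with the quantitative estimate of Theorem~\ref{thm:Sinfty:chemdist} so as to control \emph{all} points in the chemical ball simultaneously. The exponent $1+\constS>1$ in \eqref{eq:funcS} is what makes the tail in \eqref{eq:Sinfty:chemdist} summable along dyadic scales, which is exactly what Borel--Cantelli requires; with a slower rate this bootstrap would break down. Secondary technical points --- verifying integrability and stationarity of $X_{m,n}$ despite the $\omega$-dependence of $\pi$, and showing that the regularization error $|z-\pi(z)|$ is negligible at scale $R$ --- are routine, handled using the stretched-exponential tail in \eqref{eq:C1:infty}.
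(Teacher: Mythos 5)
Your proposal follows essentially the same route as the paper: regularizing via a shift-covariant ``nearest point of $\set_\infty$'' map (the paper's $\Phi(\cdot,\set_\infty)$), applying Kingman's subadditive ergodic theorem to the regularized chemical distance to obtain a deterministic norm $p_u$ whose unit ball is $D_u$, and then upgrading the countably many directional limits to the uniform two-sided inclusion via a finite covering argument that uses the Borel--Cantelli-summable tail from Theorem~\ref{thm:Sinfty:chemdist} (packaged in the paper as Lemma~\ref{l:chemdistball:crudebound}). The structure, the key lemmas, and even the identification of the exponent $1+\constS>1$ as what makes the bootstrap work all match the paper's proof.
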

\begin{remark} 
(i) The set $D_u$ preserves symmetries of $\mathbb P^u.$ In particular, if 
$\mathbb P^u$ is invariant with respect to the isometries of $\R^d$ which preserve $0$ and $\Z^d$, then so is $D_u$. \\
(ii) Since $\mathbb P^u$, $u\in(a,b)$, satisfy \pp{}, it follows that 
for any $u,u'\in(a,b)$ with $u<u'$, $D_u \subseteq D_{u'}$.
\end{remark}

We prove Theorem \ref{thm:shapeThm} in Section \ref{section:shape_proof} using Theorem \ref{thm:Sinfty:chemdist} and
Kingman's subadditive ergodic theorem \cite{kingman} in a standard fashion.

\bigskip

Let us now describe the strategy for the proof of Theorem~\ref{thm:Sinfty:chemdist}. 
In this description we will say that a path is ``short" if its length is comparable to the $\ell^1$-distance of its
 endvertices. 
We will set up a general multi-scale renormalization scheme, which
involves the recursive definition of unfavorable regions on higher and higher scales. 
Our aim is to 
iteratively construct short nearest neighbor paths in $\set$: given a short path $\pi$ (not necessarily in $\set$) which avoids the unfavorable regions on a given scale, 
we modify this path in a way that the resulting path $\pi'$ (still not necessarily in $\set$) avoids the unfavorable regions on the previous scale as well. 
We choose our scales to grow faster than geometrically in order to achieve an upper bound on the length of the iteratively constructed path which is uniform in the number of iterations. 
Our definition of the favorable region on the bottom scale implies (due to \s{}) that such region
contains a 
unique macroscopic connected component which is locally connected to the macroscopic connected components of all the neighboring favorable regions. 
By ``glueing'' such connected components together, we are able to locally modify 
short paths (not necessarily in $\set$) that avoid unfavorable regions on the bottom scale 
to obtain short nearest neighbor paths in $\set$. 
The weak decorrelation inequalities of \ppp{} allow us to set up a renormalization scheme in which unfavorable regions on higher scales become very unlikely. 
This is a tricky part, since we should be satisfied only with monotone events 
(note here that the local uniqueness event in the probability of inequality \eqref{eq:locUnProb} is not monotone!)
and cope with different parameters ($u$ and $\widehat u$) in the decorrelation inequalities.  
All in all, favorable regions on high scales are likely and allow for short paths in $\set$. 
We use this conclusion to define an event $\mathcal H$ such that 
(a) $\mathcal H$ implies the event on the left-hand side of \eqref{eq:Sinfty:chemdist} and 
(b) $\mathbb P^u[\mathcal H]$ is at least $1 - Ce^{-c(\binlog R)^{1+\constS}}$.

\begin{remark}\label{remark_isoperimetry}
It is natural to ask whether the conditions \p{} -- \ppp{} and \s{} -- \sss{} are sufficient for obtaining finer results about the structure of $\set_\infty$, 
e.g., quenched {\it Gaussian bounds on the transition kernel} of a simple random walk on $\set_\infty$ and a quenched {\it invariance principle} for the walk. 
In the case of Bernoulli percolation, the corresponding Gaussian bounds were proved in \cite{Barlow} (see also \cite{MathieuRemy} for a partial result) 
using extensively techniques and results of \cite{antal_pisztora}, and the quenched invariance principle was subsequently proved in \cite{SidoraviciusSznitman_RW} for $d\geq 4$ and 
in \cite{BergerBiskup,MathieuPiatnitski} for all $d\geq 2$ using the results of \cite{Barlow}. 
Techniques of the above papers heavily rely on tools valid only for weakly dependent models, such as the Liggett-Schonmann-Stacey theorem \cite{LSS}, 
and are not applicable even to the specific models treated in Sections~\ref{sec:ri}--\ref{sec:gff}. 
We believe that the multi-scale renormalization scheme developed in this paper will find applications in the proof of the above questions not only for specific examples of models, but in the full 
generality of assumptions \p{} -- \ppp{} and \s{} -- \sss{}. 
In fact, the quenched invariance principle for random walk on $\set_\infty$ under assumptions \p{} -- \ppp{} and \s{} -- \sss{} has been recently proved in \cite{PRS_isoperimetric}. 
The proof of \cite{PRS_isoperimetric} is based on a careful analysis of our renormalization scheme and our main result Theorem~\ref{thm:Sinfty:chemdist}. 
Also, the recent preprint \cite{Okamura} takes advantage of our setup and main result
Theorem~\ref{thm:Sinfty:chemdist} in order to derive quenched large deviations for simple random walk
on the infinite connected component $\set_\infty.$ 
\end{remark}

\bigskip 

The rest of the paper is organized as follows. 

In Section~\ref{sec:examples} we give applications of our main results by demonstrating particular models in which the conditions \p{} -- \ppp{} and \s{} -- \sss{} are satisfied.
Our main focus is on the models for which {\it only} weak decorrelation inequalities of the form \eqref{eq:decorrelation:decreasing} and \eqref{eq:decorrelation:increasing} are available. 
In particular, in Section~\ref{sec:ri} we show that our results give an alternative approach to the results of \cite{CP} for random interlacements, 
in Section~\ref{sec:vsri} we show that our results hold for the vacant set of random interlacements in a (sub)phase of the supercritical parameters, 
and in Section~\ref{sec:gff} we show that the level sets of the Gaussian free field fit into our setup for a (sub)phase of the supercritical parameters. 
The results of Sections~\ref{sec:vsri} and \ref{sec:gff} are new. 
Finally, in Section~\ref{sec:rwtorus} we apply results of Section~\ref{sec:vsri} to obtain new results about the (chemical) diameter of the giant connected component 
in the complement of the random walk trace on the torus. 

In Section~\ref{section:renorm} we define the multi-scale renormalization scheme in an abstract setting. 
We define the notion of unfavorable regions and state conditions under which the higher scale unfavorable regions are unlikely. 

In Section \ref{section:conn_patterns} we define the connectivity patterns of favorable regions on the bottom scale of our scheme, 
bringing into play two families of events of high probability, one increasing and one decreasing. 

In Section \ref{section:good_and_bad} we use the events from Section~\ref{section:conn_patterns} and the notion of favorable/unfavorable regions from Section~\ref{section:renorm} 
to define good and bad regions on all scales. We then describe explicitly how to modify a path through a good region on one scale to obtain a path through a good region on a 
lower scale without increasing its length by too much. This gives us one iteration in the construction of short paths in $\set$. 

In Section \ref{section:proof_of_chemdist_thm} we prove Theorem \ref{thm:Sinfty:chemdist}. 

In Section \ref{sec:badseed:proof} we prove the main result of Section~\ref{section:renorm} (see Theorem~\ref{thm:badseed:proba}) verifying the conditions under which 
high scale regions are favorable with very high probability. 
We use the weak decorrelation inequalities to prove that a good upper bound on the probability of unfavorable events on a certain scale results in an even
better bound on the next scale. 

In Section \ref{section:shape_proof} we prove Theorem \ref{thm:shapeThm} using Theorem \ref{thm:Sinfty:chemdist} and
Kingman's subadditive ergodic theorem \cite{kingman} in a standard fashion.

\bigskip

Throughout the paper, $e_i$, $1\leq i\leq d$, denote the canonical basis in $\mathbb R^d$. 
Constants are denoted by $c$ and $C$. Their values may change from place to place. 
We omit the dependence of constants on $a$, $b$, and $d$, but reflect the dependence on other parameters in the notation.

\section{Applications}\label{sec:examples}

In this section we give a number of examples for which our conditions  and results hold. 
In particular, we show that Theorems~\ref{thm:Sinfty:chemdist} and \ref{thm:shapeThm} hold for 
\begin{itemize}\itemsep0pt
 \item[(a)]
the {\it random interlacements} at any level $u>0$ (see Section~\ref{sec:ri}), 
giving alternative, model independent proofs to some of the results in \cite{CP},
\item[(b)]
the {\it vacant set of random interlacements} at level $u$ in the (non-empty) regime of so-called ``local uniqueness'', 
which is believed to coincide with the whole supercritical phase (see Section~\ref{sec:vsri}), 
\item[(c)]
the {\it level sets of the Gaussian free field}, also in the (non-empty) regime of local uniqueness (see Section~\ref{sec:gff}).
\end{itemize}
The results in Sections~\ref{sec:vsri} and \ref{sec:gff} are new, and in particular, they cannot be derived
using the techniques of \cite{CP} (see Remark \ref{remark_cerny_popov} for further discussion).
Let us also point out that the above models (a), (b) and (c) give rise to random subsets of $\Z^d$ with \emph{polynomial decay of correlations}, but still they satisfy the strong concentration bound \eqref{eq:Sinfty:chemdist}.

In Section~\ref{sec:rwtorus} we apply the results of Section~\ref{sec:vsri} to study the (chemical) diameter of the largest connected component 
in the complement of the trace of random walk on a discrete torus.

\subsection{Bernoulli percolation}\label{sec:bp}

Bernoulli site percolation with parameter $u\in(p_c,1)$ on $\Z^d$, $d\geq 2$, satisfies all the conditions
 \p{} -- \ppp{} and \s{} -- \sss{} (see, e.g., \cite{Grimmett}).
In this case $\mathbb P^u$ is just the product measure on $(\{0,1\}^{\Z^d},\mathcal F)$ with 
$\mathbb P^u[\Psi_x = 1] = 1 - \mathbb P^u[\Psi_x = 0] = u$. 

The analogue of Theorem \ref{thm:Sinfty:chemdist} for the chemical distance on the infinite cluster
 of Bernoulli percolation in the whole supercritical phase was proved in \cite[Theorem 1.1]{antal_pisztora}, where
 the authors obtained an exponentially decaying upper bound.

\subsection{Random interlacements}\label{sec:ri}

Random interlacements $\mathcal I^u$ at level $u>0$ on $\Z^d$, $d\geq 3$,  
is a random subset of $\Z^d$,  
which arises as the local limit as $N \to \infty$ of the set of sites visited by a simple random walk on
the discrete torus $(\Z/N\Z)^d$, $d \geq 3$, when it runs up to time $\lfloor u  N^d \rfloor$, see \cite{SznitmanAM, windisch_torus}.  
The distribution of $\mathcal I^u$ is determined by the equations
\begin{equation}\label{def_eq:Iu_capa}
\mathbb P[\mathcal I^u\cap K = \emptyset] = e^{-u\cdot \mathrm{cap}(K)}, \quad\text{for any finite $K\subseteq\Z^d$,}
\end{equation}
where $\mathrm{cap}(K)$ denotes the discrete capacity of $K$. 
For any $u>0$, $\mathcal I^u$ is an infinite almost surely connected random subset of $\Z^d$ (see \cite[(2.21)]{SznitmanAM}) with polynomially decaying correlations 
\begin{equation}\label{eq:ri:correlations}
\left|\mathbb P[x,y\in\mathcal I^u] - \mathbb P[x\in\mathcal I^u]\cdot \mathbb P[y\in\mathcal I^u]\right|\asymp (1+|x-y|)^{2-d}~,\qquad x,y\in\Z^d 
\end{equation}
(see \cite[(1.68)]{SznitmanAM}).
Geometric properties of random interlacements have been extensively studied in the last few years 
(see \cite{CP,LT,PrShel,PT,RS,RS:Transience,RS:Disordered}). 

\begin{remark}\label{remark_cerny_popov}
The fact that the conclusions of Theorems \ref{thm:Sinfty:chemdist} and \ref{thm:shapeThm} hold for the 
random interlacements $\mathcal I^u$ at level $u>0$ has been recently established in \cite[Theorems 1.3 and 1.1]{CP}.
The key idea in the proofs of \cite{CP} is a certain refinement of the strategy developed in \cite{RS}, 
which crucially relies on the fact that for any finite $K\subseteq\Z^d$ the random set $\mathcal{I}^u \cap K$ 
 can be generated as the union of independent random walk traces on $K$,
see \cite[(1.53)]{SznitmanAM}. 
The goal of this section is to observe (based on earlier results about random interlacements) that 
the laws of $\mathcal{I}^u$ satisfy conditions \p{} -- \ppp{} and \s{} -- \sss{}. 
As a result, we obtain an alternative, model independent approach to the results of \cite{CP}.
\end{remark}

\begin{theorem}\label{thm:ri}
For any $a>0$, the distributions of $\mathcal I^u$, $u\in(a,\infty)$, satisfy conditions \p{} -- \ppp{} and \s{} -- \sss{}. 
In particular, the results of Theorems~\ref{thm:Sinfty:chemdist} and \ref{thm:shapeThm} hold for
$\mathbb P^u$ being the distribution of $\mathcal I^u$ for any $u>0$. 
\end{theorem}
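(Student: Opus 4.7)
The strategy is to verify each of the five conditions \p{}, \pp{}, \ppp{}, \s{}, and \sss{} for the family of distributions $\mathbb P^u$ of $\mathcal I^u$, $u>a$; Theorems~\ref{thm:Sinfty:chemdist} and \ref{thm:shapeThm} then apply directly to deliver the conclusion. Three of the conditions admit quick verifications. Translation invariance and ergodicity of $\mathcal I^u$ are established in \cite{SznitmanAM}, yielding \p{}. Condition \pp{} is a direct consequence of the standard coupling of the whole family via a Poisson point process on trajectory space, which produces $\mathcal I^u\subseteq\mathcal I^{u'}$ a.s.\ whenever $u<u'$. For \sss{}, recall from \cite[(2.21)]{SznitmanAM} that $\mathcal I^u$ is almost surely connected and infinite, so $\set_\infty=\mathcal I^u$ a.s., and \eqref{def_eq:Iu_capa} applied to $K=\{0\}$ gives $\eta(u)=1-\exp(-u/g(0,0))$, which is positive and real-analytic on $(0,\infty)$.

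Condition \ppp{} is secured by the sprinkled decoupling inequalities for random interlacements developed by Sznitman and refined in subsequent works. These say that for any increasing (respectively decreasing) events $B_1,B_2$ (respectively $A_1,A_2$) supported in the boxes $\ballZ(x_i,10L)$ with $|x_1-x_2|_\infty\geq R\cdot L$, and for $u\geq(1+\delta)\widehat u$, one has
\[
\mathbb P^{\widehat u}[B_1\cap B_2]\leq \mathbb P^u[B_1]\cdot\mathbb P^u[B_2]+\mathrm{err}(L,\delta),
\]
and similarly for decreasing events with the roles of $u$ and $\widehat u$ exchanged. The sprinkling $\delta$ is precisely what absorbs the polynomially decaying correlations \eqref{eq:ri:correlations}. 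Choosing $\delta=R^{-\constP}$ for a suitably small $\constP>0$, one obtains an error bound of stretched-exponential form $\exp(-e^{(\log L)^{\epsP}})$ for some $\epsP>0$, which fulfils both \eqref{eq:decorrelation:decreasing}--\eqref{eq:decorrelation:increasing} and \eqref{def:epsP}.

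Condition \s{} is the most delicate. Since $\mathcal I^u$ is a.s.\ connected and infinite, the existence estimate on $\set_R\cap\ballZ(R)$ reduces to showing that some component of $\mathcal I^u\cap\ballZ(R)$ has $\ell^1$-diameter at least $R$ with overwhelming probability, which follows relatively directly from \eqref{def_eq:Iu_capa} and standard random walk estimates. The local uniqueness part \eqref{eq:locUnProb} is exactly the statement that any two vertices of $\mathcal I^u\cap\ballZ(R)$ lying in components of $\ell^1$-diameter $\geq R/10$ are connected inside $\mathcal I^u\cap\ballZ(2R)$; this has been established in the random interlacement literature (e.g.\ \cite{RS:Transience,CP}) with a probabilistic bound of the required stretched-exponential type $e^{-c(\log R)^{1+\constS}}$ for some $\constS=\constS(u)>0$.

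The main obstacle is \ppp{}: while sprinkled decoupling inequalities for $\mathcal I^u$ are well established, one must extract a statement with exactly the quantitative form prescribed by \eqref{eq:uwidehatu}--\eqref{def:epsP}, applicable to arbitrary monotone events on boxes of side $10L$, and deduce the stretched-exponential rate $\funcP(L)\geq e^{(\log L)^{\epsP}}$. Once \p{}--\ppp{} and \s{}--\sss{} are verified in this form, Theorems~\ref{thm:Sinfty:chemdist} and \ref{thm:shapeThm} apply verbatim and Theorem~\ref{thm:ri} follows.
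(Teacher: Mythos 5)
Your proposal verifies the same five conditions by citing the same circle of results, so the overall route matches the paper's. A few remarks on precision. For \pp{} the paper cites \cite[(1.53)]{SznitmanAM}, which is exactly the Poisson-point-process coupling you describe. For \ppp{} the paper uses a specific decoupling inequality \cite[Theorem~2.1]{Sznitman:Decoupling} with an explicit parameter choice (notably $\constP = 1/4$), and this yields an error term $e^{-\funcP(L)}$ with $\funcP(L) = 2aL - \ln 2$, i.e., \emph{exponential} in $L$; your phrasing suggests the decoupling only produces an error of size $\exp\bigl(-e^{(\log L)^{\epsP}}\bigr)$, which is considerably weaker. Since \eqref{def:epsP} only asks that $\funcP(L) \geq e^{(\log L)^\epsP}$ eventually, both bounds suffice, so this is an imprecision rather than a gap, but you should be aware that the interlacement decoupling is quantitatively much stronger than what \ppp{} demands. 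For \s{}, the paper invokes \cite[Proposition~1]{RS:Transience} for the whole of \s{} directly; you correctly note that since $\mathcal I^u$ is a.s.\ infinite and connected, $\set_r = \mathcal I^u$ for all $r$, which is what makes that proposition apply cleanly. Your computation $\eta(u) = 1 - \exp(-u\,\mathrm{cap}(\{0\}))$ (with $\mathrm{cap}(\{0\}) = 1/g(0,0)$) matches the paper's for \sss{}.
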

\begin{remark}
Theorem \ref{thm:Sinfty:chemdist} applied to the distribution of
 $\mathcal I^u$ gives a weaker result than \cite[Theorem 1.3]{CP}, 
since the bound on the right-hand side of \eqref{eq:Sinfty:chemdist} is not as good as the stretched exponential bound in \cite[Theorem 1.3]{CP}. 
Nevertheless, this weaker bound is still sufficient for us to deduce Theorem~\ref{thm:shapeThm}, 
thus giving an alternative proof of \cite[Theorem 1.1]{CP}.
\end{remark}
\begin{proof}[Proof of Theorem~\ref{thm:ri}]
The distributions of $\mathcal I^u$, $u>0$, satisfy condition \p{} by \cite[(2.3)]{SznitmanAM}, and condition \pp{} by \cite[(1.53)]{SznitmanAM}. 

The fact that the distributions of $\mathcal I^u$, $u\in(a,\infty)$, satisfy condition \ppp{} for any given $a>0$ 
follows from the more general decoupling inequalities \cite[Theorem~2.1]{Sznitman:Decoupling} applied to the graph $E = \Z^{d-1}\times\Z$ (see also \cite[Remark~2.7(1)]{Sznitman:Decoupling}). 
We will use the notation introduced in \cite[Section 2]{Sznitman:Decoupling}.
We take $\constP = 1/4$ in \eqref{eq:uwidehatu}. 
Our choice of parameters in \cite[Theorem~2.1]{Sznitman:Decoupling} is the following: 
\[
K = 1,\quad n= 0,\quad L_0 = L,\quad \ell_0 = \lfloor|x_1 - x_2|/L\rfloor,\quad \nu = d-2~(\alpha = d-1,~\beta=2), \quad \nu' = \nu/4 .\
\]
We also use the notation $\widehat u$ instead of $u'$ here. 
Note that with the above choice of parameters we have $\nu\geq 1$ and $\ell_0\geq R$ (since $d\geq 3$ and $|x_1 - x_2| \geq R\cdot L$). 
In \cite[Theorem~2.1]{Sznitman:Decoupling}, the inequality $l_0 \geq c(K, \nu')$ is required, so we need to choose
$\RP \geq c(1, \frac{d-2}{4})$.
With the above choice of parameters, $u$ and $\widehat u$ satisfy condition (2.7) of \cite[Theorem~2.1]{Sznitman:Decoupling} for all $R\geq \RP$ (for some suitably big $\RP$), 
so \eqref{eq:decorrelation:decreasing} and \eqref{eq:decorrelation:increasing} 
immediately follow from \cite[(2.8) and (2.9)]{Sznitman:Decoupling}
with the function $\funcP(L) = 2aL - \ln 2$,  thus the distributions of $\mathcal I^u$, $u\in(a,\infty)$  
indeed satisfy condition \ppp{}.

\medskip

By \cite[(2.21)]{SznitmanAM}, for any $u>0$, $\mathcal I^u$ is an almost surely connected infinite subset of $\Z^d$,
in particular we have $\mathcal{I}^u_r=\mathcal{I}^u$ (see \eqref{def:setr}) for any $r \in [0, \infty]$. 
The distributions of $\mathcal I^u$, $u>0$, satisfy \s{} by \cite[Proposition~1]{RS:Transience}, and \sss{} by \eqref{def_eq:Iu_capa} 
(since $\mathbb P[0\in\mathcal I^u] = 1 - e^{-u\mathrm{cap}(\{0\})}$ is a positive and continuous function of $u\in(0,\infty)$). 

\medskip

We have checked that the distributions of $\mathcal I^u$ satisfy \p{} -- \pp{} and \s{} -- \sss{} for all $u>0$, and \ppp{} for $u>a>0$ (and any given $a>0$). 
The proof of Theorem~\ref{thm:ri} is complete. 
\begin{remark}
Property \ppp{} can also be derived using the recent result of \cite[Theorem 1.1]{PoTe}. 
\end{remark}
\end{proof}

\subsection{Vacant set of random interlacements}\label{sec:vsri}

The vacant set of random interlacements $\mathcal V^u$ at level $u$ in dimension $d\geq 3$ is defined as the complement of random interlacements $\mathcal I^u$ at level $u$:
\begin{equation}\label{def:vsri}
\mathcal V^u = \Z^d \setminus \mathcal I^u,\quad u>0 .\
\end{equation}
In particular, it follows from \eqref{def_eq:Iu_capa} that the distribution of $\mathcal V^u$ is determined by the equations
\[
\mathbb P[\mathcal V^u\supset K] = e^{-u\cdot \mathrm{cap}(K)}, \quad\text{for any finite $K\subseteq\Z^d$,}
\]
and it follows from \eqref{eq:ri:correlations} that the correlations in $\mathcal V^u$ decay polynomially. 

In the same way as random interlacements, 
the vacant set at level $u>0$ arises as the local limit as $N \to \infty$ of the set of sites not visited by a simple random walk (we call it the vacant set, too) on
the discrete torus $(\Z/N\Z)^d$, $d \geq 3$, when it runs up to time $\lfloor u  N^d \rfloor$. 
In fact, the connection between the vacant sets of random interlacements and a simple random walk on $(\Z/N\Z)^d$ is 
in terms of a strong coupling of \cite[Theorem~1.1]{teixeira_windisch}. 
This coupling provided a powerful tool to study the fragmentation of $(\Z/N\Z)^d$ by a simple random walk using existing results about the geometry of $\mathcal V^u$. 
In particular, it was shown in \cite{teixeira_windisch} that the fragmentation question is intimately related to the 
existence of a non-trivial phase transition in $u$ for $\mathcal V^u$:
there exists $u_*\in(0,\infty)$ such that 
\begin{itemize}\itemsep0pt
\item[(i)] 
for any $u>u_*$, almost surely, all connected components of $\mathcal V^u$  are finite, and 
\item[(ii)]
for any $u<u_*$, almost surely, $\mathcal V^u$ contains an infinite connected component. 
\end{itemize}
The fact that $u_*<\infty$ was proved in \cite{SznitmanAM}, and the positivity of $u_*$ was established in \cite{SznitmanAM} when $d\geq 7$, 
and later in \cite{SidoraviciusSznitman_RI} for all $d\geq 3$. 
It is also known (see \cite{Teixeira_AAP}) that if there exists an infinite connected component in $\mathcal V^u$, then it is almost surely unique.

\medskip

In this section we focus on the supercritical phase of $\mathcal V^u$ (regime (ii) above). More specifically, 
we would like to derive conclusions of Theorems~\ref{thm:Sinfty:chemdist} and \ref{thm:shapeThm} for the vacant set
 $\mathcal V^u$. 
While it would be desirable to obtain results for all $u<u_*$, our understanding of the supercritical
phase is not yet good enough to do so. 
In Theorem~\ref{thm:vsri} we prove that there exists $\overline u\in(0,u_*]$ such that 
the conclusions of Theorems~\ref{thm:Sinfty:chemdist} and \ref{thm:shapeThm} hold for $\mathcal V^u$ with $u\in(0,\overline u)$.
We believe that $\overline u = u_*$ (see Remark~\ref{rem:ubarustar}). 
Even in the present form, the result of Theorem~\ref{thm:vsri} below is new for any choice of dimension $d\geq 3$ and level $u>0$. 

\begin{theorem}\label{thm:vsri}
There exists $\overline u\in(0,u_*]$ such that 
for any $1/\overline u<b<\infty$, the distributions $\mathbb P^v$ of $\mathcal V^{1/v}$, $v\in(1/\overline u,b)$, 
satisfy conditions \p{} -- \ppp{} and \s{} -- \sss{}. 
In particular, the conclusions of Theorems~\ref{thm:Sinfty:chemdist} and \ref{thm:shapeThm} hold 
for the law of  $\mathcal V^u$ for any $u\in(0,\overline u)$. 
\end{theorem}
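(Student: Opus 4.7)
The plan is to verify the five conditions \p{} -- \ppp{} and \s{} -- \sss{} for $\mathbb P^v$, the law of $\mathcal V^{1/v}$, in the same spirit as the proof of Theorem~\ref{thm:ri}, with one essential new ingredient needed for \s{}. The reparametrization $v = 1/u$ is chosen so that larger $v$ corresponds to smaller $u$, and hence to a stochastically larger vacant set $\mathcal V^{1/v}$; this is what allows \pp{} to hold with the convention used throughout the paper. The constant $\overline u$ will be chosen as the supremum of parameters $u$ for which the local uniqueness property \s{} is known to hold for $\mathcal V^u$, and all conditions will be verified for $v \in (1/\overline u, b)$.

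The conditions \p{} and \pp{} are immediate: translation invariance and ergodicity of $\mathcal V^u$ follow from the corresponding properties of $\mathcal I^u$ (\cite[(2.3)]{SznitmanAM}) by passing to the complement, and the stochastic monotonicity of $\mathcal V^{1/v}$ in $v$ follows from the fact that the coupled construction of \cite[(1.53)]{SznitmanAM} makes $\mathcal I^u$ monotone non-decreasing in $u$, so $\mathcal V^{1/v}$ is monotone non-decreasing in $v$. For \ppp{}, I would apply the decoupling inequalities of \cite[Theorem~2.1]{Sznitman:Decoupling} (and \cite[Remark~2.7]{Sznitman:Decoupling}) exactly as in the proof of Theorem~\ref{thm:ri}, taking care that an increasing (resp.\ decreasing) event for $\mathcal V^{1/v}$ is a decreasing (resp.\ increasing) event for $\mathcal I^{1/v}$. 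The sprinkling condition \eqref{eq:uwidehatu}, when rewritten in terms of $u = 1/v$, $\widehat u = 1/\widehat v$, becomes $\widehat v \ge (1+R^{-\constP}) v$, which is precisely the form required to apply the vacant set decoupling bounds; with the same choice of $K=1$, $n=0$, $L_0=L$, $\ell_0 = \lfloor |x_1-x_2|/L\rfloor$, $\nu = d-2$, $\nu' = \nu/4$, and $\constP = 1/4$, one obtains both \eqref{eq:decorrelation:decreasing} and \eqref{eq:decorrelation:increasing} with a function $\funcP(L)$ of at least linear growth in $L$, comfortably fulfilling \eqref{def:epsP}.

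The main obstacle is \s{}, the local uniqueness property, and this is precisely the reason we cannot treat the entire supercritical regime $u \in (0,u_*)$ but must restrict to a subphase $(0,\overline u)$. My plan here is to take $\overline u$ to be the supremum of the set of $u > 0$ such that the pair of bounds appearing in \s{} has already been established in the literature for $\mathcal V^u$ with a function $\funcS(u,R)$ satisfying \eqref{eq:funcS}. In practice this means appealing to the existing local uniqueness estimates for $\mathcal V^u$ derived from the sprinkling/renormalization techniques in the papers by Sznitman, Sidoravicius--Sznitman, and Teixeira (which underpin the positivity of $u_*$ and related fragmentation results). By definition of $\overline u$, for every $v \in (1/\overline u, b)$, \s{} holds for $\mathcal V^{1/v}$ with some $\constS(v)>0$ and $\RS(v)<\infty$. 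The set $(0,\overline u)$ is nonempty since local uniqueness is known to hold for all sufficiently small $u>0$, e.g.\ by a standard Peierls argument combined with decoupling.

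Finally, for \sss{}, positivity of $\eta(v) = \mathbb P[0 \in (\mathcal V^{1/v})_\infty]$ on $(1/\overline u, b)$ follows from \s{} together with \eqref{eq:Sinfty} and \eqref{eq:C1:infty}, while continuity of $v \mapsto \eta(v)$ in the supercritical subphase follows from the combination of stochastic monotonicity \pp{}, the FKG inequality for $\mathcal V^u$, and the fact that on the supercritical side the infinite cluster is almost surely unique (\cite{Teixeira_AAP}), by a standard argument: monotonicity gives one-sided continuity from one direction, uniqueness plus a sandwiching argument using slightly perturbed parameters in the same coupling of \cite[(1.53)]{SznitmanAM} gives the other. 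Once \p{} -- \ppp{} and \s{} -- \sss{} are in place, the conclusions of Theorems~\ref{thm:Sinfty:chemdist} and \ref{thm:shapeThm} follow directly, which yields the second assertion of the theorem for every $u \in (0,\overline u)$.
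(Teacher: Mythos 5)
Your proposal follows essentially the same route as the paper's proof: derive \p{} and \pp{} directly from the properties of $\mathcal I^u$ via the complementation $\mathcal V^{1/v} = \Z^d \setminus \mathcal I^{1/v}$ and the reparametrization $v = 1/u$; obtain \ppp{} from the Sznitman decoupling inequalities (noting that increasing events for $\mathcal V$ are decreasing for $\mathcal I$ and vice versa, and that the sprinkling inequality reverses direction accordingly); define $\overline u$ as the threshold below which \s{} holds; and invoke uniqueness of the infinite cluster for \sss{}. This matches the paper step by step. One small point of imprecision worth flagging: for the non-emptiness of $(0,\overline u)$ the paper cites the main result of \cite{DRS} (which gives local uniqueness for small $u$ in all dimensions $d \geq 3$), and the earlier result of \cite{Teixeira} only for $d \geq 5$; describing this input as following from ``a standard Peierls argument combined with decoupling'' undersells what is actually a substantive theorem about $\mathcal V^u$, and the paper is careful to pin down the precise reference. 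Similarly, for \sss{} the paper simply cites \cite[Corollary~1.2]{Teixeira_AAP}, while you sketch the underlying argument (monotonicity plus a.s.\ uniqueness of the infinite cluster), which is fine and amounts to the content of that reference.
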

\begin{proof}[Proof of Theorem~\ref{thm:vsri}]
For $v\in(0,\infty)$, let $\mathbb P^v$ denote the distribution of $\mathcal V^{1/v}$. 
Recall from \eqref{def:vsri} that for any $v>0$, $\mathcal V^{1/v} = \Z^d\setminus \mathcal I^{1/v}$. 
From this and Theorem~\ref{thm:ri} it is immediate that the family $\mathbb P^v$, $v\in(0,\infty)$, satisfies conditions \p{} and \pp{}, and 
the family $\mathbb P^v$, $v\in(0,b)$, satisfies condition \ppp{} for any $b<\infty$. 

\medskip

Let $\overline u$ be the largest $u$ such that the family $\mathbb P^v$, $v\in(1/u,\infty)$, satisfies condition \s{}. 
It follows from \eqref{eq:Sinfty} that $\overline u\leq u_*$. 
The positivity of $\overline u$ for all $d\geq 3$ follows from the main result of \cite{DRS} 
(for $d\geq 5$, it also follows from \cite[(1.2) and (1.3)]{Teixeira}). 

Finally, the family $\mathbb P^v$, $v\in(1/u_*,\infty)$, satisfies condition \sss{} by \cite[Corollary~1.2]{Teixeira_AAP}. 

\medskip

We have checked that the distributions of $\mathcal V^{1/v}$ satisfy \p{} -- \pp{} for $v\in(0,\infty)$, 
\ppp{} for $v\in(0,b)$ (and any given $b<\infty$), \s{} for $v\in(1/\overline u,\infty)$ (with $\overline u>0$), and \sss{} for $v\in(1/u_*,\infty)$.
Since $\overline u\leq u_*$, the proof of Theorem~\ref{thm:vsri} is complete. 
\end{proof}

\begin{remark}\label{rem:ubarustar}
Assumption \s{} is satisfied by  Bernoulli percolation in the whole supercritical phase 
(see e.g. \cite[(7.89),(8.98)]{Grimmett}). Thus it is reasonable to conjecture
that $\overline u = u_*$, i.e., that the distributions of $\mathcal V^u$ satisfy \s{} for all $u<u_*$. 
The verification of this conjecture would in particular imply that 
the conclusions of Theorems~\ref{thm:Sinfty:chemdist} and \ref{thm:shapeThm} hold 
for $\mathcal V^u$ for any $u\in(0,u_*)$.
\end{remark}

\subsection{Level sets of the Gaussian free field}\label{sec:gff}

The Gaussian free field on $\Z^d$, $d\geq 3$, is a centered Gaussian field $\varphi = (\varphi_x)_{x\in\Z^d}$
under the probability measure $\mathbb P$ with covariance 
$\mathbb E [\varphi_x\varphi_y] = g(x,y)$, for all $x,y\in\Z^d$, where $g(\cdot,\cdot)$ denotes the Green function of the simple random walk on $\Z^d$.  The random field $\varphi$ exhibits long-range correlations, since $g(0,x)$ decays like 
$|x|^{2-d}$ as $|x| \to \infty$.

For any $h\in\R$, we define the {\it excursion set} above level $h$ as 
\begin{equation}\label{def:levelsGFF}
E^{\geq h}_\varphi = \{x\in\Z^d~:~\varphi_x \geq h\} .\
\end{equation}
We view $E^{\geq h}_\varphi$ as a random subgraph of $\Z^d$. 
For any $d\geq 3$, there exists $h_*\in[0,\infty)$ such that 
\begin{itemize}\itemsep0pt
 \item[(a)]
for any $h< h_*$, $\mathbb P$-almost surely $E^{\geq h}_\varphi$ contains a unique infinite connected component, and 
\item[(b)]
for any $h>h_*$, $\mathbb P$-almost surely all the connected components of $E^{\geq h}_\varphi$ are finite. 
\end{itemize}
The finiteness of $h_*$ was established in \cite{BLM} when $d= 3$, 
and later in \cite{RodSz} for all $d\geq 3$. 
The non-negativity of $h_*$ was shown in \cite{BLM}, and the uniqueness in \cite[Remark 1.6]{RodSz}. 

\medskip

In this section we show that  the excursion set 
$E^{\geq h}_\varphi$
 of the Gaussian free field satisfies the assumptions and results of this paper in  a
  certain sub-regime $(-\infty, \overline{h})$ of the supercritical phase $(-\infty,h_*)$.
The relation between $\overline h$ and $h_*$ is discussed in Remark  \ref{h_bar_h_star}.

The main result of this section is the following theorem. 
\begin{theorem}\label{thm:gff}
For $u>0$, let $h(u) = h_*-u+1$, and let $\mathbb P^u$ be the law of $E^{\geq h(u)}_\varphi$. 
Then for any $d\geq 3$, there exists $\overline u\in [1,\infty)$ such that 
the family of distributions $\mathbb P^u$, $u>\overline u$, satisfies assumptions \p{} -- \ppp{} and \s{} -- \sss{}. 
In particular, there exists $\overline{h} \in (-\infty, h_*]$ such that
the conclusions of Theorems~\ref{thm:Sinfty:chemdist} and \ref{thm:shapeThm} hold for the distribution of
the excursion set $E^{\geq h}_\varphi$ for any $h \in (-\infty, \overline{h})$. 
\end{theorem}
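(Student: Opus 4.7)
The plan is to verify, one by one, that the family $\mathbb{P}^u$ (defined via $h(u) = h_*-u+1$) satisfies each of the conditions \p{} -- \ppp{} and \s{} -- \sss{} in the regime $u > \overline u$ for a suitable threshold $\overline u \in [1, \infty)$. The reparametrization is chosen precisely so that $h(u)$ decreases in $u$, making $E^{\geq h(u)}_\varphi$ stochastically increasing in $u$, which immediately gives \pp{}. Taking $\overline u \geq 1$ ensures that $h(u) < h_*$ throughout, placing us inside the supercritical phase of the level sets.

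First I would dispose of the easy conditions. Condition \p{} is standard: the law of $\varphi$ is invariant under lattice translations (the Green function depends only on differences of arguments), and ergodicity follows from the decay $g(0,x) \asymp |x|^{2-d}$ by a classical mixing argument; any measurable functional of the level set $E^{\geq h(u)}_\varphi$ inherits these properties. Condition \pp{} was noted above. Condition \sss{} -- positivity and continuity of $\eta(u) = \mathbb{P}[0 \in E^{\geq h(u)}_{\varphi,\infty}]$ -- follows on $(\overline u, \infty)$ from the uniqueness of the infinite cluster in the supercritical phase established in \cite[Remark 1.6]{RodSz}, combined with an argument in the spirit of the one used for $\mathcal V^u$ in \cite{Teixeira_AAP}, which adapts to the GFF setting.

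The substantive part is to verify \ppp{} and \s{}. For \ppp{}, I would invoke the decoupling inequalities for the Gaussian free field from \cite{RodSz}. Those inequalities are precisely of the sprinkling type needed here: for well-separated boxes $\ballZ(x_1,10L)$ and $\ballZ(x_2,10L)$ at distance at least $R\cdot L$, they bound the probability of a pair of monotone events under $\mathbb P^u$ by the product of the single-box probabilities under a slightly perturbed parameter $\widehat u$, with an error of the required stretched-exponential form $e^{-\funcP(L)}$. Matching the notation, the perturbation is of the form $u \geq (1+R^{-\constP})\widehat u$ with $\constP$ determined by the polynomial correlation decay of the GFF, and $\funcP(L)$ can be arranged to dominate $e^{(\log L)^\epsP}$ for some $\epsP>0$. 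The assumption $u > \overline u \geq 1$ keeps $\widehat u$ safely away from $0$, hence $h(\widehat u)$ safely away from $+\infty$, which is where the decoupling input requires control.

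The main obstacle -- and the reason we only obtain the conclusion in a sub-regime $(-\infty, \overline h)$ rather than on the full supercritical interval $(-\infty, h_*)$ -- is condition \s{}. Establishing local uniqueness of the macroscopic cluster of $E^{\geq h(u)}_\varphi$ requires a multi-scale renormalization argument in the spirit of the one carried out for $\mathcal V^u$ in \cite{DRS}: the renormalization takes as input the weak decorrelation \ppp{} together with a sufficiently strong seed estimate showing that two large components in neighbouring boxes merge with high probability inside a slightly enlarged box, and iterates it over scales. Such a seed estimate is available only once the level $h$ is sufficiently negative, i.e., $u$ sufficiently large. Defining $\overline u$ as the infimum of the values of $u$ for which this renormalization succeeds, and setting $\overline h = h_* - \overline u + 1$, yields the required threshold. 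Extending \s{} (and hence Theorems \ref{thm:Sinfty:chemdist} and \ref{thm:shapeThm}) all the way up to $h_*$ remains the natural open problem, mirroring the situation for the vacant set of random interlacements described in Remark \ref{rem:ubarustar}.
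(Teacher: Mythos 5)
Your high-level structure (verify \p{}--\sss{} one by one, use the reparametrization $h(u)=h_*-u+1$ to turn a decreasing family of levels into an increasing family of sets, take $\overline u\geq 1$ so that $h(u)<h_*$) matches the paper, and your treatment of \p{}, \pp{}, and \sss{} is essentially the same as the paper's. However, there is a genuine gap in your treatment of \ppp{}, and your sketch of \s{} is looser than the paper's actual argument.

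On \ppp{}, you assert that the decoupling inequalities already present in \cite{RodSz} ``are precisely of the sprinkling type needed here.'' The paper explicitly contradicts this: it states that \cite[Proposition 2.2]{RodSz} is \emph{not} strong enough to yield \ppp{} (pointing to the issue raised in \cite[Remark 2.3(3)]{RodSz}), and therefore proves a new decoupling statement, Lemma \ref{lemma:decorr_gff}, from scratch. That lemma is the technical heart of the proof: one has to condition on the set of vertices in $E^{\geq h_0}_\varphi$ reachable from the first box, use the Markov/Gaussian domain-decomposition structure of the free field to write the conditional law in the second box as an independent Gaussian field plus a harmonic shift $\mu_x$ bounded by $CR^{2-d}$, carry out the sprinkling $h \mapsto h - \gamma$ with $\gamma = CR^{2-d}$, flip to a complementary decreasing event using the symmetry $\varphi\sim -\varphi$, and finally invoke the FKG inequality for the free field. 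This is not a citation of an existing inequality, and skipping it leaves a hole where the paper has its main new technical input. (The paper does note that one could alternatively cite the later preprint \cite{PR}, but relying on \cite{RodSz} as you do is not sufficient.)

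On \s{}, your proposal invokes a multi-scale renormalization \`a la \cite{DRS} with a seed estimate ``available only once $h$ is sufficiently negative,'' but does not say where that seed estimate comes from, which is precisely the nontrivial point. The paper's route is more concrete and avoids a full renormalization at this stage: it uses a stretched-exponential bound on $*$-connectivity in $E^{\geq h}_\varphi$ for $h\geq \widetilde h$ large (proved as in \cite[(2.64)]{RodSz}), applies the sign-flip symmetry of $\varphi$ to transfer this to the complement of $E^{\geq -h}_\varphi$ for $h>\widetilde h$, and then runs a standard Peierls-type argument based on the connectedness of $*$-boundaries (as in \cite[Corollary 3.7]{DRS}) to deduce local uniqueness \s{} for all levels below $-\widetilde h$. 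Defining $\overline u$ as the infimum of the thresholds for which \s{} holds (as the paper does, and as you do in spirit) is fine, but without the $*$-connectivity input your argument for $\overline u<\infty$ is incomplete.
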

\begin{proof}
The assumption \p{} is satisfied by $\mathbb P^u$, $u>0$, see the discussion above \cite[Lemma 1.5]{RodSz}.
 
The family $\mathbb P^u$, $u>0$, satisfies \pp{}, since 
the inclusion $E^{\geq h}_\varphi \subseteq E^{\geq h'}_\varphi$ holds for all $h' \leq h \in \R$ by \eqref{def:levelsGFF}.
 
The fact that the distributions $\mathbb P^u$, $u\in(1,\infty)$, satisfy \ppp{} follows from Lemma~\ref{lemma:decorr_gff} below.

\medskip

Recall that $\mathcal F$ denotes the canonical sigma-algebra on $\{0,1\}^{\Z^d}$, 
and denote by $\mathcal{G}$ the canonical sigma-algebra on $\R^{\Z^d}$. 
For an event $A$ in $\mathcal{F}$ or $\mathcal{G}$, we denote by $\mathds{1}(A)$ the indicator of $A$.
For any $B \in \mathcal F$ and $h \in \R$, we define the event $B^h \in \mathcal G$ by
\begin{equation}\label{def_eq_B_h}
 \forall \; \varphi \in \R^{\Z^d} \, : \qquad 
 \varphi \in  B^h  \; \; \iff \; \;  
  \left( \mathds{1} \left( \varphi_x \geq h \right),~x \in \Z^d\right) \in B.
  \end{equation}

\begin{lemma}\label{lemma:decorr_gff}

Let $d \geq 3$. 
Let $L \geq 1$ and $R\geq 50$ be integers. Let $x_1,x_2\in\Z^d$. 
For $i\in\{1,2\}$, let $A_i\in\sigma(\Psi_y~:~y\in \ballZ(x_i,10L))$ be decreasing events, and 
$B_i\in\sigma(\Psi_y~:~y\in \ballZ(x_i,10L))$ increasing events. 
Let $h,\widehat h\in\R$. 
There exist $c>0$ and $C<\infty$ such that if
\begin{equation*}
\widehat h \geq h + C \cdot R^{2-d}\qquad\mbox{and}\qquad  |x_1 - x_2|_\infty \geq R\cdot L ,\
\end{equation*}
then
\begin{equation}\label{eq:decorrelation:decreasing_gff}
\mathbb P\left[A_1^h\cap A_2^h\right] \leq 
\mathbb P\left[A_1^{\widehat h}\right] \cdot
\mathbb P\left[A_2^{\widehat h}\right] 
+ Ce^{-L^c} 
\end{equation}
and
\begin{equation}\label{eq:decorrelation:increasing_gff}
\mathbb P\left[B_1^{\widehat h}\cap B_2^{\widehat h}\right] \leq 
\mathbb P\left[B_1^h\right] \cdot
\mathbb P^u\left[B_2^h\right] 
+ Ce^{-L^c} .\
\end{equation}
\end{lemma}
The proof of Lemma~\ref{lemma:decorr_gff} is similar to that of \cite[Proposition 2.2]{RodSz}, 
but some new ideas are needed 
here since
the latter is not
strong enough in order to imply \ppp{} --- see also the issue discussed in \cite[Remark 2.3 (3)]{RodSz}. 
We postpone the proof of Lemma \ref{lemma:decorr_gff} to after finishing the rest of the proof of Theorem \ref{thm:gff}.
Note that 
one could use the more recent decoupling inequality of \cite{PR} to obtain \ppp{} --- however, 
for self-containedness we
include Lemma \ref{lemma:decorr_gff} and its short proof.

\medskip

To see that Lemma~\ref{lemma:decorr_gff} implies that \ppp{} is satisfied by the family $\mathbb P^u$, $u\in(1,\infty)$,  
note that for any $u,\widehat u>1$ such that $u\geq (1+C\cdot R^{2-d})\widehat u$, we have that $u\geq \widehat u + C\cdot R^{2-d}$. 
In particular, we get that $h(\widehat u) = h_* - \widehat u + 1 \geq h(u) + C\cdot R^{2-d}$. 
It is now immediate from Lemma \ref{lemma:decorr_gff} that $\mathbb P^u$, $u \in (1, \infty),$ satisfies \ppp{} for
 any choice of $0<\epsP<1$ and $0<\constP<d-2$.

\medskip

We proceed with \s{}. 
Let $\overline u\in[0,\infty]$ be the smallest $u'$ such that the family $\mathbb P^u$, $u>u'$, satisfies condition \s{}. 
Define $\overline{h}=h(\overline{u})$ and note that $\overline{h} \leq h_*$ (i.e., $\overline{u} \geq 1$) follows from
\eqref{eq:Sinfty} and the definition of $h_*$.
We will now prove that $\overline u < \infty$ (i.e., $\overline{h}> - \infty$).
 
We say that $x,y \in \Z^d$ are $*$-connected in $\mathcal S$ if there exists
$x_1,\dots,x_n \in \mathcal S$ such that $x_1=x$, $x_n=y$ and $|x_{k+1}-x_k|_{\infty}=1$ for all 
 $ k \in \{1,\dots,  n-1\}$.
 Essentially the same proof as the one of \cite[(2.64)]{RodSz} implies that for any $d \geq 3$ there exists 
 $\widetilde{h} \in (0,\infty)$ and constants $c>0$ and $C<\infty$ such that 
 \begin{equation*}
 \mathbb{P} \left[ \text{ $0$ and $x$ are $*$-connected in $E^{\geq h}_\varphi$ } \right] \leq C e^{-|x|^c}, 
 \quad \text{ for all $x \in \Z^d$, $h \geq \widetilde{h}$.}
 \end{equation*}

Observing that the laws of $E^{\geq h}_\varphi$ and $\Z^d\setminus E^{\geq -h}_\varphi$ are the same under $\mathbb P$, 
we immediately obtain that the $*$-connected components in the complement of $E^{\geq -h}_\varphi$ are very small
for any $h \in (\widetilde{h},\infty)$. 
Then, a standard argument based on the connectedness of $*$-boundaries (see, e.g., the proof of \cite[Corollary~3.7]{DRS}) 
implies that the distribution of $E^{\geq -h}_\varphi$ satisfies \s{} for any $h \in (\widetilde{h},\infty)$. 
Let $\widetilde u = h_* + \widetilde h + 1< \infty$. 
For any $u>\widetilde u$, we have that $h(u) < - \widetilde h$. 
Therefore, the family $\mathbb P^u$, $u>\widetilde u$ satisfies condition \s{}. In particular, $\overline u \leq \widetilde u < \infty$. 

\medskip

Finally, recall that by \cite[Remark~1.6]{RodSz}, for any $h<h_*$, $\mathbb P$-almost surely, there is a unique infinite connected component in $E^{\geq h}_\varphi$. 
Therefore, with standard methods (see, e.g., the proof of \cite[Lemma (8.10)]{Grimmett}), it follows that 
the distributions of $\mathbb P^u$, $u \in (1, \infty)$, satisfy assumption \sss{}.

We have checked that the family of distributions $\mathbb P^u$
 satisfies \p{} -- \pp{} for $u>0$, 
\ppp{} and \sss{} for $u\in(1,\infty)$, and \s{} for $u\in (\overline u,\infty)$ with $\overline u<\infty$.
The proof of Theorem~\ref{thm:gff} is complete, given the result of Lemma \ref{lemma:decorr_gff}.
\end{proof}

\begin{remark}\label{h_bar_h_star}
Similarly to Remark \ref{rem:ubarustar},
 it is reasonable to conjecture
that $\overline h = h_*$, i.e., that the distribution of $E^{\geq h}_\varphi$ satisfies \s{} for all $h<h_*$. 
As soon as this conjecture  is verified, 
the conclusions of Theorems~\ref{thm:Sinfty:chemdist} and \ref{thm:shapeThm}
extend to the excursion set  $E^{\geq h}_\varphi$ for any parameter 
$h$ in the supercritical regime $(-\infty, h_*)$.
Proving that $\overline h = h_*$ may be quite hard. 
A reasonable start would be to prove that $\overline h\geq 0$. 
A strict inequality, i.e., $\overline h>0$, would be even better, but at the moment there is no rigorous argument even for the statement that $h_* >0$ for all $d\geq 3$. 
It is only known that $h_* >0$ when the dimension $d$ is large, see \cite[Theorem 3.3]{RodSz}. 
Extending the result of \cite[Theorem 3.3]{RodSz} to all dimensions $d\geq 3$ is 
thus another interesting open problem, see \cite[Remark 3.6 (3)]{RodSz}. 
\end{remark}

\begin{proof}[Proof of Lemma \ref{lemma:decorr_gff}]
We will only prove \eqref{eq:decorrelation:increasing_gff}. The proof of \eqref{eq:decorrelation:decreasing_gff} is similar, and we omit it. 

Let $K_i=\ballZ(x_i,10L)$ for $i=1,2$.
Let $H(h)$ be the event that 
\begin{equation*}
H(h) = \left\{ \text{ there is no nearest neighbour path in $E^{\geq h}_\varphi$ 
connecting $K_1$ to $\ballZ(x_1,20L)^c$} \right\} .\
\end{equation*}
By \cite[(2.63)]{RodSz}, there exists $h_0\in(h_*,\infty)$ and constants $c>0$ and $C<\infty$ such that 
$\mathbb{P} [H(h_0)^c] \leq  Ce^{-L^c}$ for all $L\geq 1$. 
From now on we fix such $h_0$ and write $H$ for $H(h_0)$. 
In order to prove \eqref{eq:decorrelation:increasing_gff} we only need to show that there exists a constant $C<\infty$ such that
with $\gamma$ defined as 
\begin{equation}\label{def_eq_lambda}
\gamma = C \cdot R^{2-d}, 
\end{equation}
for every $h \in \R$ and every pair $B_i\in\sigma(\Psi_y~:~y\in K_i)$, $i=1,2$ of increasing events in $\mathcal F$ satisfying the assumptions of Lemma \ref{lemma:decorr_gff}, 
we have 
\begin{equation}\label{eq:decorrelation:increasing_gff_2}
\mathbb P \left[B_1^h \cap B_2^h\cap H \right] \leq 
\mathbb P \left[ B_1^{h-\gamma} \right] \cdot
\mathbb P \left[ B_2^{h-\gamma} \right] .\
\end{equation}
(Here, to simplify notation, 
we replaced $\widehat h$ of \eqref{eq:decorrelation:increasing_gff} by $h$, and $h$ of \eqref{eq:decorrelation:increasing_gff} by $h-\gamma$, hopefully without confusion.)

Denote by $\mathcal{D}$ the union of $K_1$ and the set of vertices of $\Z^d$ connected to $K_1$ by a simple path in $E^{\geq h_0}_\varphi$. 
Note that if $H$ occurs, then 
\begin{equation*}
\ballZ(x_1,10L) \subseteq \mathcal{D} \subseteq \ballZ(x_1,20L). 
\end{equation*}
Denote by $\mathbf{D}$ the set of subsets $D$ of $\ballZ(x_1,20L)$ that can arise as a realization of $\mathcal{D}$ with positive $\mathbb{P}$-probability. 
We start proving \eqref{eq:decorrelation:increasing_gff_2} by writing
\begin{equation}\label{chopping}
\mathbb P \left[B_1^h \cap B_2^h \cap H\right] =
 \sum_{D \in \mathbf{D}} \mathbb{P} \left[ B_1^h \cap B_2^h  \cap \left\{ \mathcal{D}=D  \right\} \right] .\
\end{equation}
Note that the event $\left\{ \mathcal{D}=D  \right\}$ is measurable with respect to the
 sub-sigma-algebra of $\mathcal G$ generated by 
$\varphi_{\overline{D}}= \left( \varphi_{x} \right)_{x \in \overline{D}}$, where 
$\overline{D}=\{ x \in \Z^d \, : \, \exists \, y \in D \; : \; |x-y|_1 \leq 1 \} $ denotes the $l_1$-closure of $D$. 
Thus for any $D \in \mathbf{D}$, 
\begin{equation}\label{expect_of_condprob}
\mathbb{P} \left[ B_1^h \cap B_2^h  \cap \left\{ \mathcal{D}=D  \right\} \right]=
\mathbb{E} \left[ \mathds{1}(B_1^h) \cdot \mathds{1}\left( \mathcal{D}=D\right) \cdot
\mathbb{P} \left[ \left( \varphi_x \right)_{x \in K_2} \in  B_2^h \, \Big\vert \, \varphi_{\overline{D}} \; \right] \,  \right].
\end{equation}
According to \cite[Remark 1.3]{RodSz}, for every $D \in \mathbf{D}$ there exists a probability measure  
$\widetilde{\mathbb{P}}$ on $\R^{\Z^d}$ such that $\left( \widetilde{\varphi}_x \right)_{x \in \Z^d}$ is a
 centered Gaussian field under $\widetilde{\mathbb{P}}$ with $\widetilde \varphi_x = 0$ for all $x\in \overline D$, and 
 \begin{equation}\label{conditioning_of_gff}
 \mathbb{P} \left[  \left( \varphi_x \right)_{x \in \Z^d} \in \, \cdot \; \big\vert \, \varphi_{\overline{D}} \;  \right]=
 \widetilde{\mathbb{P}} \left[  \left( \widetilde{\varphi}_x + \mu_x \right)_{x \in \Z^d} \in \, \cdot \;   \right],
 \end{equation}
where $\mu_x$, $x \in \Z^d$ is given by
\begin{equation*}
 \mu_x = \sum_{y \in \overline{D}} 
P_x \left[ H_{\overline{D}} < \infty, \,  X_{H_{\overline{D}}} =y   \right] \cdot \varphi_y, 
\end{equation*}
where $P_x$ denotes the law of simple random walk on $\Z^d$ started at $x$,
$H_{\overline{D}}$ denotes the first time that the random walk enters $\overline{D}$ and
 $X_{H_{\overline{D}}}$ is the position of the walker at time $H_{\overline{D}}$. 

Let us fix $D \in \mathbf{D}$ and $\varphi_{\overline{D}}$ such that the event $\left\{ \mathcal{D}=D  \right\}$ occurs.
By the definition of $\mathcal{D}$, we have that $\mu_x < h_0\cdot P_x(H_{\overline D}<\infty)$ for all $x\notin \overline D$. Under the assumptions of Lemma \ref{lemma:decorr_gff}
the $l^\infty$-distance of $K_2$ and $\overline{D}$ is at least $ \frac15 R \cdot L$, thus
by a standard argument based on estimates for the discrete Green function and capacity (see, e.g., the calculation below \cite[(2.30)]{RodSz}), 
we obtain that
 there exists $C'<\infty$ such that for any $x\in K_2$, 
$P_x(H_{\overline D}<\infty) \leq C'\cdot R^{2-d}$. In particular, by taking $C$ in \eqref{def_eq_lambda} to be
 $2  h_0 \cdot C'$, we obtain that 
 \begin{equation}\label{upperbound_on_mu_x_gamma}
\max_{x \in K_2} \mu_x \leq h_0\cdot C' \cdot R^{2-d} \stackrel{\eqref{def_eq_lambda}}{=} \frac12 \gamma .\
 \end{equation}

Now we are ready to carry out the ``sprinkling'' mentioned in Remark \ref{rem:decorrelation}:
\begin{multline}\label{eq_sprinkling}
\mathbb{P} \left[ \left( \varphi_x \right)_{x \in K_2} \in  B_2^h \, \Big\vert \, \varphi_{\overline{D}} \; \right]
\stackrel{\eqref{conditioning_of_gff}}{=}
\widetilde{\mathbb{P}} \left[  \left( \widetilde{\varphi}_x + \mu_x \right)_{x \in K_2} \in B_2^h \;   \right]
\stackrel{ (*) }{\leq}\\
\widetilde{\mathbb{P}} \left[  \left( \widetilde{\varphi}_x + \frac12 \gamma \right)_{x \in K_2} \in B_2^h \;   \right]=
\widetilde{\mathbb{P}} \left[  \left( \widetilde{\varphi}_x - \frac12 \gamma \right)_{x \in K_2} \in B_2^{h-\gamma} \;   \right] \stackrel{ (*) }{\leq}
\widetilde{\mathbb{P}} \left[  \left( \widetilde{\varphi}_x - \mu_x \right)_{x \in K_2} \in B_2^{h-\gamma} \;   \right],
\end{multline}
where in the equations marked by $(*)$ above we used \eqref{upperbound_on_mu_x_gamma} and the fact that $B_2$ is an increasing event. 

\medskip

For any $B \in \mathcal F$ we define the \emph{flipped} event $\check{B} \in \mathcal F$ by
\begin{equation}\label{def_eq_flipped_event}
\forall \; \atom \in \{0,1\}^{\Z^d} \, : \qquad 
\atom \in \check{B} \;\; \iff \;\; \left( \mathds{1} ( \atom_x =0 ), x\in\Z^d \right) \in B.    
\end{equation}
Note that if $B$ is an increasing event, then $\check{B}$ is a decreasing event.
With definition \eqref{def_eq_flipped_event} at hand, we note that  
\begin{multline}\label{flipped_identity}
\left\{ \left( \widetilde{\varphi}_x - \mu_x \right)_{x \in K_2} \in B_2^{h-\gamma} \right\}
  \stackrel{\eqref{def_eq_B_h}}{=} 
\left\{ \left( \mathds{1}\left( \widetilde{\varphi}_x - \mu_x \geq h -\gamma\right), x\in K_2   \right) \in B_2 \right\}
 \stackrel{\eqref{def_eq_flipped_event} }{=} \\
\left\{ \left( \mathds{1}( \widetilde{\varphi}_x - \mu_x < h -\gamma ), x\in K_2   \right) \in \check{B}_2
\right\}
\stackrel{\eqref{def_eq_B_h}}=
\left\{ \left( -\widetilde{\varphi}_x + \mu_x \right)_{x \in K_2} \in \check{B}_2^{\gamma-h} \right\}, 
\quad \widetilde{\mathbb{P}}\text{-a.s.}
\end{multline}
Plugging this  identity into \eqref{eq_sprinkling} we obtain
\begin{equation*}
\mathbb{P} \left[   B_2^h \, \Big\vert \, \varphi_{\overline{D}} \; \right]
\leq 
\widetilde{\mathbb{P}} \left[  \left( -\widetilde{\varphi}_x + \mu_x \right)_{x \in K_2} \in \check{B}_2^{\gamma-h} \;   \right] 
\stackrel{(*)}{=}
\widetilde{\mathbb{P}} \left[  \left( \widetilde{\varphi}_x + \mu_x \right)_{x \in K_2} \in \check{B}_2^{\gamma-h} \;   \right] 
\stackrel{\eqref{conditioning_of_gff}}{=}
\mathbb{P} \left[ \check{B}_2^{\gamma-h} \, \Big\vert \, \varphi_{\overline{D}} \; \right],
\end{equation*}
where in the equation marked by $(*)$ we used the fact that $\widetilde{\varphi}_{K_2}$ and $-\widetilde{\varphi}_{K_2}$
have the same distribution under $\widetilde{\mathbb{P}}$.
Substituting this inequality back into \eqref{expect_of_condprob}, we obtain 
\[ \mathbb{P} \left[ B_1^h \cap B_2^h  \cap \left\{ \mathcal{D}=D  \right\} \right] \leq 
\mathbb{P} \left[ B_1^h \cap  \check{B}_2^{\gamma-h}  \cap \left\{ \mathcal{D}=D  \right\} \right]. 
\]
Combining this inequality with \eqref{chopping} we conclude
\[
\mathbb P \left[B_1^h \cap B_2^h \cap H\right] 
\leq
\mathbb P \left[B_1^h \cap \check{B}_2^{\gamma-h}  \right]
 \stackrel{(a)}{\leq}
\mathbb P \left[B_1^h \right] \mathbb P \left[ \check{B}_2^{\gamma-h}  \right]
 \stackrel{(b)}{\leq} 
\mathbb P \left[B_1^{h-\gamma} \right] \mathbb P \left[ B_2^{h-\gamma}  \right],
\]
where in $(a)$ we used the fact that the increasing event $B_1^h$ and the decreasing event $\check{B}_2^{\gamma-h}$
are negatively correlated under $\mathbb P$  by the
FKG-inequality for the Gaussian free field (see the remark above 
\cite[Lemma 1.4]{RodSz}) and in $(b)$ we used the fact that $B_1$ is increasing and a similar identity as in \eqref{flipped_identity} combined with the
fact that  $\varphi$ and $-\varphi$ have the same distribution under $\mathbb{P}$. This concludes the proof of \eqref{eq:decorrelation:increasing_gff_2} and 
\eqref{eq:decorrelation:increasing_gff}. The proof of \eqref{eq:decorrelation:decreasing_gff} is analogous and we omit it.
The proof of Lemma \ref{lemma:decorr_gff} is complete.
\end{proof}

\subsection{Vacant set of random walk on a torus}\label{sec:rwtorus}

We consider a simple symmetric nearest neighbor random walk on the $d$-dimensional torus $\mathbb{T}_N=(\Z / N \Z)^d$, $d\geq 3$, 
started from a uniformly distributed vertex. 
Let $\mathcal{I}^{u,N}$ denote the random subset of $\mathbb{T}_N$ which consists of the sites visited by the random walk in the first $\lfloor u N^d \rfloor$ steps, 
and \[\mathcal{V}^{u,N}= \mathbb{T}_N \setminus \mathcal{I}^{u,N}\] the vacant set of the random walk. 
We denote the distribution of $\mathcal V^{u,N}$ by $\mathbb P^{u,N}$. 
We view $\mathcal{V}^{u,N}$ as a (random) graph by drawing an edge between any two vertices of $\mathcal{V}^{u,N}$ at $\ell^1$-distance $1$.
The study of percolative properties of $\mathcal{V}^{u,N}$ was initiated in \cite{BenjaminiSznitman} 
and recently significantly boosted in \cite{teixeira_windisch}.  Informally, \cite[Theorems 1.2 and 1.3]{teixeira_windisch} 
state that for all $d \geq 3$, with high probability as $N\to\infty$, 
\begin{enumerate}[(a)]\itemsep0pt
\item if $u$ is big enough then $\mathcal{V}^{u,N}$ consists of small connected components, but
\item if  $u$ is small enough then  $\mathcal{V}^{u,N}$ contains macroscopic connected components.
\end{enumerate}
 It is conjectured that the transition between these two phases is sharp and occurs at the critical 
threshold $u_*$ of the vacant set of random interlacements (see Section \ref{sec:vsri} for the definition of $u_*$).

Uniqueness of the giant  component of $\mathcal{V}^{u,N}$ is only known in high dimensions: it follows from 
 \cite[Theorem 1.4]{teixeira_windisch} that if $d \geq 5$ and $u$ is small enough  then 
 the connected component $\mathcal{C}^u_{max}$ of $\mathcal{V}^{u,N}$ with the largest volume has a positive asymptotic density and the volume of the second largest
 connected component  of $\mathcal{V}^{u,N}$ is bounded by $(\log N)^C$, with high probability as $N \to \infty$.

We are interested in the chemical distance $\chemdist{u,N}(\cdot ,\cdot)$ on the vacant set $\mathcal{V}^{u,N}$. 
In particular, we show in Theorem \ref{thm:chemdist:torus} below that if $d \geq 5$ and $u$ is small enough then 
the (chemical) diameter of the subgraph of $\mathbb{T}_N$ spanned by the giant vacant component $\mathcal{C}^u_{max}$ 
is comparable to $N$ with high probability as $N \to \infty$.

\begin{theorem}\label{thm:chemdist:torus}
For any $d\geq 5$, there exists $\widetilde u = \widetilde u(d)>0$ such that for each $u<\widetilde u$ 
there exists $C = C(d,u)<\infty$ such that for all $\kappa>0$,    
\begin{equation}\label{eq:chemdist:torus}
\lim_{N \to \infty} N^{\kappa} \left(
1 - \mathbb P^{u,N}\left[
\text{$N/C\leq\max_{x,y\in\mathcal C^u_{max}}\chemdist{u,N}(x,y) \leq C N$}
\right]\right)=0.\
\end{equation}
\end{theorem}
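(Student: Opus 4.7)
The plan is to reduce the claim to the chemical distance bound for the vacant set $\mathcal V^v$ of random interlacements on $\Z^d$ (for which Theorem~\ref{thm:Sinfty:chemdist} is available via Theorem~\ref{thm:vsri}), using the sprinkling coupling of \cite[Theorem~1.1]{teixeira_windisch} between $\mathcal V^{u,N}$ and $\mathcal V^v$ on sub-boxes of $\mathbb T_N$. Take $\widetilde u$ smaller than both the constant $\overline u$ of Theorem~\ref{thm:vsri} and the threshold below which the uniqueness of $\mathcal C^u_{max}$ from \cite[Theorem~1.4]{teixeira_windisch} applies (this is where the restriction $d\geq 5$ enters), so that $\widetilde u>0$. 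The lower bound in \eqref{eq:chemdist:torus} is easy: since $\mathcal C^u_{max}$ has asymptotic positive density by \cite[Theorem~1.4]{teixeira_windisch}, with probability $1-o(N^{-\kappa})$ for every $\kappa$ the component $\mathcal C^u_{max}$ contains two vertices at $\ell^1$-distance at least $N/(8d)$, and $\chemdist{u,N}$ dominates the $\ell^1$-distance on $\mathbb T_N$. The work is in the upper bound.

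\textbf{Upper bound via mesoscopic covering.} Fix $u<\widetilde u$ and pick $v\in(u,\widetilde u)$. Choose a mesoscopic scale $M=M(N)$ with $\log M \asymp (\log N)^{1/(1+\constS/2)}$, so that the stretched-exponential bound $e^{-c(\log M)^{1+\constS}}$ of Theorem~\ref{thm:Sinfty:chemdist} beats any polynomial rate $N^{-\kappa-d}$. Cover $\mathbb T_N$ by an $O((N/M)^d)$-family of overlapping boxes $B_i$ of side $5M$ whose central sub-boxes of side $M$ tile $\mathbb T_N$, arranged so that the $\ell^\infty$-ball of radius $2M$ about any point of $\mathbb T_N$ lies in at least one $B_i$. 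Using \cite[Theorem~1.1]{teixeira_windisch} locally on each $B_i$ together with a union bound, on an event $\mathcal C$ of probability $1-o(N^{-\kappa})$ one can couple the restrictions of $\mathcal V^{u,N}$ to each $B_i$ with copies of $\mathcal V^v$ satisfying $\mathcal V^v\cap B_i \subseteq \mathcal V^{u,N}\cap B_i$. Applying Theorem~\ref{thm:Sinfty:chemdist} and \eqref{eq:C2:epsilon} to the coupled $\mathcal V^v$ in each $B_i$, on a further event $\mathcal D$ of probability $1-o(N^{-\kappa})$, within every $B_i$ any two vertices of $\mathcal V^v$ lying in a $\mathcal V^v$-component of $\ell^1$-diameter at least $M/10$ inside $B_i$ are joined by a $\mathcal V^v$-path of length at most $CM$ staying in $B_i$; by the coupling such paths lie in $\mathcal V^{u,N}\cap B_i$.

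\textbf{Gluing.} Combine $\mathcal C\cap\mathcal D$ with the uniqueness of $\mathcal C^u_{max}$ and the $(\log N)^C$-bound on the second largest component from \cite[Theorem~1.4]{teixeira_windisch}: on an event of probability $1-o(N^{-\kappa})$, the only $\mathcal V^{u,N}$-component intersecting any single $B_i$ in $\ell^1$-diameter larger than $M/10$ is $\mathcal C^u_{max}$, so the ``local giant'' inside $B_i$ is exactly $\mathcal C^u_{max}\cap B_i$. Since consecutive overlapping boxes share a common sub-box of side comparable to $M$ which must contain a vertex of $\mathcal C^u_{max}$ (again by the positive-density statement applied locally), the local giants in neighbouring boxes share a common vertex. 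Any two $x,y\in \mathcal C^u_{max}$ are then joined by concatenating $O(N/M)$ local paths of length $O(M)$ along a chain of overlapping boxes on $\mathbb T_N$ linking $x$ to $y$, producing a $\mathcal V^{u,N}$-path of length $O(N)$.

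\textbf{Main obstacle.} The delicate step is balancing three probabilistic inputs on a mesoscopic scale: the stretched-exponential chemical distance control of Theorem~\ref{thm:Sinfty:chemdist}, the sub-polynomial error of the sprinkling coupling of \cite{teixeira_windisch}, and the uniqueness/second-component bound of \cite[Theorem~1.4]{teixeira_windisch}; the specific choice $\log M \asymp (\log N)^{1/(1+\constS)}$ is precisely what permits a union bound over all $O((N/M)^d)$ boxes while keeping a super-polynomial error, and a naive polynomial-in-$N$ scale $M$ would not suffice. Ensuring that the local giant identified in each $B_i$ coincides with $\mathcal C^u_{max}\cap B_i$ (and that overlaps of neighbouring boxes really contain a $\mathcal C^u_{max}$-vertex) is the second source of subtle bookkeeping, and is where uniqueness on the torus, hence the assumption $d\geq 5$, is essential.
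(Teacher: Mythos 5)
Your high-level strategy (cover $\mathbb{T}_N$ by mesoscopic boxes, couple $\mathcal{V}^{u,N}$ with $\mathcal{V}^v$ on each box via \cite[Theorem~1.1]{teixeira_windisch}, import the chemical-distance bound of Theorem~\ref{thm:Sinfty:chemdist}, and glue) matches the paper's. But the gluing step has a genuine gap that the paper resolves differently, and you should also know that your claim about the mesoscopic scale is off.

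\textbf{The gap in the gluing step.} The one-sided coupling $\mathcal{V}^v\cap B_i\subseteq\mathcal{V}^{u,N}\cap B_i$ plus Theorem~\ref{thm:Sinfty:chemdist} for $\mathcal{V}^v$ tells you that vertices of $\mathcal{V}^v$ in a macroscopic $\mathcal{V}^v$-component of $B_i$ are joined by short $\mathcal{V}^v$-paths, hence short $\mathcal{V}^{u,N}$-paths. The volume bound on the second largest component from \cite[Theorem~1.4]{teixeira_windisch} tells you that any component of $\mathcal{V}^{u,N}$ with diameter $\geq M/10$ must be $\mathcal{C}^u_{max}$. Neither of these controls what you actually need: that an arbitrary vertex $x\in\mathcal{C}^u_{max}$, which may lie in $\mathcal{V}^{u,N}\setminus\mathcal{V}^v$ and in a piece of $\mathcal{C}^u_{max}\cap B_i$ that is thin and disconnected inside $B_i$ from the local $\mathcal{V}^v$-giant, can be connected \emph{locally} (inside a box, by an $O(M)$-step path) to the $\mathcal{V}^v$-giant. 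Without such a local connection you cannot start or end your chain at a generic $x\in\mathcal{C}^u_{max}$, so you do not obtain $\distS(x,y)\leq CN$ for all pairs; you only get it for pairs captured by the coupled $\mathcal{V}^v$-components. The paper closes exactly this gap by introducing the event $\mathcal{G}^{u,N}$ (any two vertices of $\mathcal{V}^{u,N}_{n^{1/d}}$ at $\ell^\infty$-distance $\leq 2n^{1/d}$ are connected inside $\ballZ(x,4n^{1/d})\cap\mathcal{V}^{u,N}$). This is a statement about $\mathcal{V}^{u,N}$ directly, not about its subset $\mathcal{V}^v$; it cannot be deduced from the monotone coupling. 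The paper bounds $\mathbb{P}^{u,N}[\mathcal{G}^{u,N}]$ using the notion of \emph{strongly supercritical} $u$ from \cite[Definition~2.4]{teixeira_windisch} together with \cite[(2.20)--(2.22)]{teixeira_windisch}, and $d\geq 5$ enters because Teixeira's Theorems 3.2--3.3 guarantee the existence of strongly supercritical $u$ only in this range. This is \emph{not} a reference to \cite[Theorem~1.4]{teixeira_windisch}, and your reliance on it is the missing idea.

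\textbf{Minor but worth correcting.} Your claim that ``a naive polynomial-in-$N$ scale $M$ would not suffice'' is incorrect: the paper fixes $n=N^{1/3}$, and this works because $e^{-c(\binlog n)^{1+\constS}}=e^{-c'(\binlog N)^{1+\constS}}$ is super-polynomial in $N$, so it swallows the union bound over $O(N^{2d/3})$ boxes. Your choice $\binlog M\asymp(\binlog N)^{1/(1+\constS)}$ in the ``Main obstacle'' paragraph would actually give only $(\binlog M)^{1+\constS}\asymp\binlog N$, hence a merely polynomial error per box, which would \emph{not} survive the $N^d$-sized union bound. (The variant with $(\binlog N)^{1/(1+\constS/2)}$ from the ``Upper bound'' paragraph does work, but so does a polynomial scale, and the latter is cleaner.) Also, your lower-bound argument via positive density and $\ell^1$-domination is fine and matches the paper's use of the ubiquity of the sets $\mathcal{C}_z$ together with the deterministic observation in Lemma~\ref{l:inclusion}.
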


\begin{remark}
The analogous result about the chemical distances on $\mathcal{I}^{u,N}$
was proved in \cite[Theorem 1.6]{CP}, which improved the earlier result of \cite[Theorem 2.1]{PrShel}.
Similarly to Remark \ref{remark_cerny_popov}, we note that many of the model-specific methods used in the proofs of these
results about the connectivity of the random walk trace $\mathcal{I}^{u,N}$
cannot be applied for the complement $\mathcal{V}^{u,N}$ of the random walk trace.
\end{remark}

\begin{remark}
We believe that the statement of Theorem~\ref{thm:chemdist:torus} holds for 
any $d\geq 3$ and $u<u_*$ (see Remark \ref{rem:ubarustar}, as well as
Section~\ref{sec:vsri} for the definition of $u_*$). 
A possibly simpler open problem is to show that $\widetilde u(d)>0$ for $d\in\{3,4\}$. 
\end{remark}
The main ingredients for the proof of Theorem~\ref{thm:chemdist:torus} are Theorem \ref{thm:vsri},  
the strong coupling of \cite[Theorem 1.1]{teixeira_windisch} between the vacant set of random walk on the torus and the vacant set of random interlacements, 
and the notion of {\it strongly supercritical} values of $u$ in \cite[Definition 2.4]{teixeira_windisch}.

We split the proof of Theorem~\ref{thm:chemdist:torus} into several steps. 
We first show that $\mathcal V^{u,N}$ contains many mesoscopic (not necessarily connected) subsets $\mathcal C_z$, $z\in\mathbb T_N$, with good chemical distance properties.
We then show that these sets are connected to each other and to any vertex of $\mathcal V^{u,N}$ which is in a connected component of large enough diameter 
by paths of length at most $CN$. 
Most of the sets $\mathcal C_z$ will be at distance $>N/C$ from each other, which will guarantee the lower bound on the (chemical) diameter of $\mathcal C^u_{max}$.

\medskip

Recall the definition of the vacant set of random interlacements $\mathcal V^u$ from Section~\ref{sec:vsri}. 
We begin with a lemma, which is motivated by the strong coupling \cite[Theorem~1.1]{teixeira_windisch} of $\mathcal V^{u,N}$ and $\mathcal V^u$. 
The lemma states that under some assumptions on $u$, there exists a mesoscopic subset of $\mathcal V^{u,N}$ with good chemical distance properties.

Let us choose (somewhat arbitrarily) the mesoscopic scale
 \[n = N^{1/3}.\] 
\begin{lemma}\label{l:goodset}
Let $d\geq 3$, $u>0$, and $\varepsilon>0$. If the distribution of $\mathcal V^{(1+\varepsilon)u}$ satisfies \eqref{eq:C1:infty} and \eqref{eq:Sinfty:chemdist}, then 
there exists $C = C(u,\varepsilon)<\infty$ such that for any $\kappa>0$, 
\begin{equation}\label{eq:goodset}
\lim_{N\to\infty} N^\kappa \left(
1 - \mathbb P^{u,N} \left[
\begin{array}{c}
\text{there exists $\mathcal C\subseteq \mathcal V^{u,N}\cap\ballZ(0,n)$ such that}\\
\text{(i) for all $x,y\in\mathcal C$, $\chemdist{u,N}(x,y)\leq Cn$, and}\\ 
\text{(ii) for all $x\in\ballZ(0,n/2)$, $\ballZ(x,n^{1/d})\cap\mathcal C\neq \emptyset$}
\end{array}
\right]
\right)
=0 .\
\end{equation}
\end{lemma}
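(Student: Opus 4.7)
The plan is to construct $\mathcal C$ from the infinite cluster of the random interlacement vacant set $\mathcal V^{(1+\varepsilon)u}$ on $\Z^d$ at the slightly higher level $(1+\varepsilon)u$, and then transfer the conclusions to the torus via the strong coupling of \cite[Theorem~1.1]{teixeira_windisch}. Fix a constant $C_0\geq 1$ (to be chosen), and apply the coupling on the box $\ballZ(0,C_0 n)$, whose radius $C_0 N^{1/3}$ is negligible compared to $N$. On a coupling event $E_{\mathrm{coup}}$ of $\mathbb P^{u,N}$-probability $1 - o(N^{-\kappa})$ for every $\kappa$, one has
\[
\mathcal V^{(1+\varepsilon)u} \cap \ballZ(0,C_0 n) \subseteq \mathcal V^{u,N} \cap \ballZ(0,C_0 n),
\]
so that any nearest-neighbor path of $\mathcal V^{(1+\varepsilon)u}$ which stays inside $\ballZ(0,C_0 n)$ is automatically a nearest-neighbor path of $\mathcal V^{u,N}$. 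Declare $\mathcal C := \mathcal V^{(1+\varepsilon)u}_\infty \cap \ballZ(0,n)$.

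For the density condition (ii), cover $\ballZ(0,n/2)$ by $O(n^{d-1})$ balls of radius $n^{1/d}/2$ whose centers form an $n^{1/d}/2$-net. Apply the translation-invariant bound \eqref{eq:C1:infty} for $\mathcal V^{(1+\varepsilon)u}_\infty$ to each small ball; a union bound yields failure probability at most
\[
O(n^{d-1}) \cdot C_1 \exp\!\bigl(-c_1(\log n^{1/d})^{1+\constS}\bigr),
\]
which is $o(N^{-\kappa})$ for every $\kappa$, since $\constS>0$ makes the stretched-exponential factor beat any polynomial in $N$. On the complement event, each $x \in \ballZ(0,n/2)$ lies within $n^{1/d}/2$ of a net point whose small ball contains a point $y$ of $\mathcal V^{(1+\varepsilon)u}_\infty$; since $|y| \leq n/2 + n^{1/d} < n$ for $n$ large, $y \in \mathcal C$, and $|x-y|\leq n^{1/d}$, confirming (ii).

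For the chemical distance bound (i), invoke \eqref{eq:Sinfty:chemdist} at scale $n$: there is a constant $C$ (depending only on $(1+\varepsilon)u$) such that with probability at least $1 - Ce^{-c(\log n)^{1+\constS}}$, every pair $x,y \in \mathcal V^{(1+\varepsilon)u}_n \cap \ballZ(0,n)$ is joined by a nearest-neighbor path in $\mathcal V^{(1+\varepsilon)u}$ of length at most $Cn$. Since $\mathcal C \subseteq \mathcal V^{(1+\varepsilon)u}_n \cap \ballZ(0,n)$, this covers pairs in $\mathcal C$; such a path is confined to $\ballZ(0,(C+1)n)$, so choosing $C_0 \geq C+1$ a priori ensures that it lies in the coupling window. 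Intersecting $E_{\mathrm{coup}}$ with the two events above then yields $\chemdist{u,N}(x,y) \leq Cn$ for all $x,y\in\mathcal C$, and all three events together have probability $1 - o(N^{-\kappa})$ for every $\kappa$, proving \eqref{eq:goodset}.

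The main obstacle lies in reconciling the two independent sources of randomness together with the fact that chemical paths produced by \eqref{eq:Sinfty:chemdist} may wander outside $\ballZ(0,n)$. The resolution is to enlarge the coupling window from $\ballZ(0,n)$ to $\ballZ(0,C_0 n)$ for a judiciously chosen $C_0$ dominating the chemical-distance constant $C$; this is admissible because $C$ depends only on the fixed level $(1+\varepsilon)u$, not on $N$. Additionally, one must ensure that the coupling error bound of \cite{teixeira_windisch} decays faster than any polynomial in $N$ on boxes of mesoscopic radius $O(N^{1/3})$, which is precisely the content of the strongly-supercritical regime underlying the hypothesis of Theorem~\ref{thm:chemdist:torus}.
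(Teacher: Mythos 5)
Your proof is correct and takes essentially the same route as the paper's: couple $\mathcal V^{u,N}$ to $\mathcal V^{(1+\varepsilon)u}$ via \cite[Theorem~1.1]{teixeira_windisch} on a mesoscopic window, set $\mathcal C = \mathcal V^{(1+\varepsilon)u}_\infty \cap \ballZ(0,n)$, get (ii) from \eqref{eq:C1:infty} via a covering union bound, get (i) from \eqref{eq:Sinfty:chemdist}, and observe the short chemical paths stay in the coupling window. The only cosmetic difference is the window size: you choose $\ballZ(0,C_0 n)$ with $C_0 \geq C+1$, whereas the paper simply takes $\ballZ(0,n^2) = \ballZ(0,N^{2/3})$, which sidesteps having to fix $C_0$ in advance since $n + Cn < n^2$ automatically for $n$ large. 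One small misstatement in your closing paragraph: the super-polynomial decay of the coupling error on mesoscopic boxes is a feature of \cite[Theorem~1.1]{teixeira_windisch} for all $u>0$, not of the strongly supercritical regime; strong supercriticality is only needed later, for the local-connectivity event $\mathcal G^{u,N}$ and the bound \eqref{eq:GuN} in the proof of Theorem~\ref{thm:chemdist:torus}.
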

\begin{remark}\label{rem:goodset}
Note that for any $u<\overline u$ (see the statement of Theorem~\ref{thm:vsri}), $\mathcal V^u$ satisfies \eqref{eq:C1:infty} and \eqref{eq:Sinfty:chemdist}. 
In particular, we deduce from Lemma~\ref{l:goodset} and Theorem~\ref{thm:vsri} that for any $d\geq 3$, $u<\overline u$, and $\kappa>0$, \eqref{eq:goodset} holds 
(by taking $\varepsilon>0$ such that $(1+\varepsilon)u<\overline u$).
\end{remark}
\begin{proof}[Proof of Lemma~\ref{l:goodset}]
Let $A = \ballZ(0,n^2)\subset \mathbb T_N$ be the ball of radius $n^2 (= N^{2/3})$ centered at $0\in\mathbb T_N$. 
The ball $A$ is isomorphic to the ball $\mathrm A = \ballZ(0,n^2)\subset\Z^d$ via the graph isomorphism $\Phi_N:A\to\mathrm A$. 
It follows from \cite[Theorem 1.1]{teixeira_windisch} that for any $d\geq 3$, $u>0$, $\varepsilon>0$, $\kappa>0$, and $N\geq 1$, 
there exists a coupling
$(\hat{\Omega}, \hat{\mathcal{A}}, \hat{\mathbb{P}}^{u,N})$ of $\mathcal V^{u,N}$ and $\mathcal V^{(1+\varepsilon)u}$ 
such that 
\begin{equation}\label{eq:coupling}
\lim_{N\to\infty} N^\kappa \left(
1 - \hat{\mathbb{P}}^{u,N} \left[
\mathcal{V}^{(1+\varepsilon)u} \cap \mathrm{A} \subseteq \Phi_N(\mathcal{V}^{u,N} \cap A)
\right]
\right)
=0 .\
\end{equation}
Now we fix $u>0$ and $\varepsilon>0$ so that the distribution of $\mathcal V^{(1+\varepsilon)u}$ satisfies \eqref{eq:C1:infty} and \eqref{eq:Sinfty:chemdist}. 
Let $u' = (1+\varepsilon)u$, let $\chemdist{u'}(\cdot,\cdot)$ be the chemical distance in $\mathcal V^{u'}$, and 
let $C_{u'}<\infty$ be the constant from \eqref{eq:Sinfty:chemdist}. 
Let $\mathcal V^{u'}_\infty$ be the unique (by \eqref{eq:Sinfty:chemdist}) infinite connected component of $\mathcal V^{u'}$. 
Consider the event 
\[
\mathcal E^{u',N} = \left\{
\begin{array}{c}
\text{for any $x,y\in\mathcal V^{u'}_\infty\cap\ballZ(0,n)$, $\chemdist{u'}(x,y)\leq C_{u'}n$, and}\\
\text{for any $x\in\ballZ(0,n/2)$, $\ballZ(x,n^{1/d})\cap\mathcal V^{u'}_\infty\neq\emptyset$} 
\end{array}
\right\} .\
\]
It follows from \eqref{eq:C1:infty}, \eqref{eq:Sinfty:chemdist}, and our choice of $u'$ that for any $\kappa>0$, 
\begin{equation}\label{eq:EuN}
\lim_{N\to\infty} N^\kappa \left(1- \hat{\mathbb{P}}^{u,N}\left[\mathcal E^{u',N}\right]\right) = 0 .\
\end{equation}

Note that if the event 
$\mathcal E^{u',N}\cap\{\mathcal{V}^{u'} \cap \mathrm{A} \subseteq \Phi_N(\mathcal{V}^{u,N} \cap A)\}$
occurs, then the set 
\[
\mathcal C = \Phi_N^{-1}\left(\mathcal V^{u'}_\infty\cap\ballZ(0,n)\right) \subseteq \mathcal V^{u,N}\cap\ballZ(0,n)
\]
satisfies the conditions in \eqref{eq:goodset} for all large enough $N$. 
Indeed, it obviously satisfies (ii) by the definition of $\mathcal E^{u',N}$ and the fact that $\ballZ(x,n^{1/d})\subset\ballZ(0,n)$ for all $x\in\ballZ(0,n/2)$.
It satisfies (i) since for any $x,y\in\mathcal V^{u'}_\infty\cap\ballZ(0,n)$, 
by the definition of $\mathcal E^{u',N}$, there exists a path of length at most $C_{u'}n$ in $\mathcal V^{u'}$ between $x$ and $y$, 
which (if $n+C_{u'}n < n^2$) is contained in $\mathrm A\cap\mathcal V^{u'}$ for large enough $N$. 
Therefore, there must be a path  in $\mathcal V^{u,N}$ of length at most $C_{u'}n$ between any pair of vertices $x,y\in\mathcal C$ 
(just take the preimage of the corresponding path in $\mathcal V^{u'}_\infty$ under $\Phi_N$).

Lemma~\ref{l:goodset} now follows from \eqref{eq:coupling} and \eqref{eq:EuN}.
\end{proof}
\begin{corollary}\label{cor:goodset}
Let $d\geq 3$ and $0<u<\overline u$, where $\overline u$ is defined in Theorem~\ref{thm:vsri}.
 Choose $\varepsilon = \varepsilon (u)>0$ such that $(1+\varepsilon)u<\overline u$.
Let $C = C(u) = C(u,\varepsilon(u))<\infty$ be the constant from \eqref{eq:goodset}. 
Consider the event
\begin{equation}\label{def:FuN}
\mathcal F^{u,N} = 
\left\{
\begin{array}{c}
\text{for any $z\in\mathbb T_N$, there exists $\mathcal C_z\subseteq \mathcal V^{u,N}\cap\ballZ(z,n)$ such that}\\
\text{(i) for all $x,y\in\mathcal C_z$, $\chemdist{u,N}(x,y)\leq Cn$, and}\\ 
\text{(ii) for all $x\in\ballZ(z,n/2)$, $\ballZ(x,n^{1/d})\cap\mathcal C_z\neq \emptyset$}
\end{array}
\right\} .\
\end{equation}
It follows from Lemma~\ref{l:goodset} and Remark~\ref{rem:goodset} that for any $d\geq 3$, $u<\overline u$, and $\kappa>0$, 
\begin{equation}\label{eq:FuN}
\lim_{N\to\infty} N^\kappa \left(1- \mathbb{P}^{u,N}\left[\mathcal F^{u,N}\right]\right) = 0 .\
\end{equation}
\end{corollary}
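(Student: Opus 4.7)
The plan is to derive Corollary~\ref{cor:goodset} by combining Lemma~\ref{l:goodset} with the translation invariance of $\mathbb P^{u,N}$ on the torus, followed by a union bound over the $N^d$ centres $z\in\mathbb T_N$. First I would observe that, since the simple random walk on $\mathbb T_N$ is started from a uniformly distributed vertex, the law of $\mathcal V^{u,N}$ is invariant under translations of $\mathbb T_N$: for any fixed $z_0\in\mathbb T_N$, the shifted walk $(X_k+z_0)_{k\geq 0}$ has the same law as $(X_k)_{k\geq 0}$, and hence $\mathcal V^{u,N}+z_0$ has the same law as $\mathcal V^{u,N}$.

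Next I would write $\mathcal F^{u,N}=\bigcap_{z\in\mathbb T_N}\mathcal F_z^{u,N}$, where $\mathcal F_z^{u,N}$ denotes the single-$z$ event that there exists $\mathcal C_z\subseteq\mathcal V^{u,N}\cap\ballZ(z,n)$ satisfying conditions (i) and (ii) of \eqref{def:FuN} at that particular $z$. By translation invariance, $\mathbb P^{u,N}[\mathcal F_z^{u,N}]=\mathbb P^{u,N}[\mathcal F_0^{u,N}]$ for every $z\in\mathbb T_N$. Lemma~\ref{l:goodset}, invoked as in Remark~\ref{rem:goodset} for $u<\overline u$ with $\varepsilon=\varepsilon(u)$, then gives that $\mathcal F_0^{u,N}$ has probability converging to $1$ faster than any polynomial in $N$, i.e.
\[
\lim_{N\to\infty} N^{\kappa'}\bigl(1-\mathbb P^{u,N}[\mathcal F_0^{u,N}]\bigr)=0\quad\text{for every } \kappa'>0.
\]

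The conclusion then follows from a one-line union bound:
\[
1-\mathbb P^{u,N}[\mathcal F^{u,N}]\leq\sum_{z\in\mathbb T_N}\bigl(1-\mathbb P^{u,N}[\mathcal F_z^{u,N}]\bigr)=N^{d}\bigl(1-\mathbb P^{u,N}[\mathcal F_0^{u,N}]\bigr).
\]
Given an arbitrary $\kappa>0$, I would then apply the displayed limit with $\kappa'=\kappa+d$ to obtain
\[
N^\kappa\bigl(1-\mathbb P^{u,N}[\mathcal F^{u,N}]\bigr)\leq N^{\kappa+d}\bigl(1-\mathbb P^{u,N}[\mathcal F_0^{u,N}]\bigr)\longrightarrow 0,
\]
which is precisely \eqref{eq:FuN}.

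There is essentially no substantial obstacle in this argument, which is why the paper itself merely says that the corollary ``follows from'' Lemma~\ref{l:goodset}. The only thing one must double-check is that the super-polynomial decay produced by Lemma~\ref{l:goodset} is strong enough to absorb the polynomial factor $N^d$ coming from the union bound; this is guaranteed by the fact that Lemma~\ref{l:goodset} quantifies over \emph{every} $\kappa>0$, leaving one free to choose $\kappa'$ as large as required.
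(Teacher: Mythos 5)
Your proof is correct and spells out exactly the argument the paper has in mind but does not bother to write down (the paper simply asserts that the corollary ``follows from'' Lemma~\ref{l:goodset} and Remark~\ref{rem:goodset}): translation invariance of the law of $\mathcal V^{u,N}$ on the torus reduces each single-$z$ event to the $z=0$ event of Lemma~\ref{l:goodset}, and the union bound over the $N^d$ vertices of $\mathbb T_N$ is absorbed by applying the lemma with $\kappa'=\kappa+d$.
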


Lemma~\ref{l:goodset} establishes under some assumptions on $u$ 
the existence of a subset $\mathcal C$ of $\mathcal V^{u,N}$ with good chemical distance properties. 
Those assumptions are not enough to extend the result of Lemma~\ref{l:goodset} to $\mathcal C^u_{max}$, 
and we need to assume that $\mathcal V^{u,N}$ is well connected locally. 
More precisely, let $\mathcal V^{u,N}_r$ be the subset of vertices in connected components of $\mathcal V^{u,N}$ with diameter $\geq r$. 
Consider the event
\[
\mathcal G^{u,N} = 
\left\{
\begin{array}{c}
\text{for all $x,y\in\mathcal V^{u,N}_{n^{1/d}}$ with $|x-y|\leq 2n^{1/d}$}\\
\text{$x$ and $y$ are connected in $\mathcal V^{u,N}\cap\ballZ(x,4n^{1/d})$}
\end{array}
\right\} .\
\]
The following lemma provides sufficient conditions for the connectedness and ubiquity of $\mathcal V^{u,N}_{n^{1/d}}$ 
and states that the (chemical) diameter of $\mathcal V^{u,N}_{n^{1/d}}$ is comparable to $N$. 
\begin{lemma}\label{l:inclusion}
For any $d\geq 3$ and $u>0$, there exists $C' = C'(u)<\infty$ and $N'<\infty$ such that for all $N\geq N'$, 
\begin{equation}\label{eq:inclusion}
\mathcal F^{u,N}\cap \mathcal G^{u,N} \subseteq 
\left\{
\begin{array}{c}
\text{for all $x,y\in\mathcal V^{u,N}_{n^{1/d}}$, $\chemdist{u,N}(x,y)\leq C'N$, and}\\
\text{for all $x\in\mathbb T_N$, $\mathcal V^{u,N}_{n^{1/d}}\cap\ballZ(x,n^{1/d}) \neq\emptyset$}
\end{array}
\right\} .\
\end{equation}
\end{lemma}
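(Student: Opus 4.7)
\textbf{Proof plan for Lemma~\ref{l:inclusion}.}

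The plan is to use the mesoscopic ``good'' sets $\mathcal{C}_z$ supplied by $\mathcal{F}^{u,N}$ as building blocks, stitched together using the local connections provided by $\mathcal{G}^{u,N}$. First I would observe that on $\mathcal{F}^{u,N}$ each $\mathcal{C}_z$ lies in a single connected component of $\mathcal{V}^{u,N}$: by property (i) any two of its vertices have finite chemical distance in $\mathcal{V}^{u,N}$ and hence belong to the same component. Moreover, applying property (ii) at the two ``poles'' $z\pm\lfloor n/2\rfloor e_1$ produces two elements of $\mathcal{C}_z$ separated by $\ell^\infty$-distance at least $n-2n^{1/d}$, so that component has diameter at least $n-2n^{1/d}$. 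For $N\geq N'$ this exceeds $n^{1/d}$, which yields the key inclusion $\mathcal{C}_z\subseteq \mathcal{V}^{u,N}_{n^{1/d}}$ for every $z\in\mathbb{T}_N$. The ubiquity claim in the right-hand event of \eqref{eq:inclusion} is then immediate: for any $x\in\mathbb{T}_N$, taking $z=x$ in (ii) produces a point of $\mathcal{C}_x\subseteq \mathcal{V}^{u,N}_{n^{1/d}}$ inside $\ballZ(x,n^{1/d})$.

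For the chemical-distance bound, given $x,y\in \mathcal{V}^{u,N}_{n^{1/d}}$ I would fix a nearest-neighbour $\mathbb{T}_N$-geodesic from $x$ to $y$ and choose milestones $z_0=x,\, z_1,\dots, z_M = y$ along it with $|z_i-z_{i+1}|_\infty \leq n/4$ and $M\leq C N/n$. For each consecutive pair the overlap $\ballZ(z_i,n/2)\cap\ballZ(z_{i+1},n/2)$ is non-empty; picking a point $p_i$ in it and applying property (ii) of $\mathcal{F}^{u,N}$ to both $z_i$ and $z_{i+1}$ yields $a_i\in\mathcal{C}_{z_i}$ and $b_i\in\mathcal{C}_{z_{i+1}}$ with $|a_i-b_i|_\infty\leq 2n^{1/d}$. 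Both $a_i,b_i$ lie in $\mathcal{V}^{u,N}_{n^{1/d}}$ by the inclusion above, hence $\mathcal{G}^{u,N}$ supplies a $\mathcal{V}^{u,N}$-path between them of length at most $|\ballZ(a_i,4n^{1/d})|\leq C n$. Analogously, $x$ can be bridged to a point of $\mathcal{C}_{z_0}\cap \ballZ(x,n^{1/d})$ by invoking $\mathcal{G}^{u,N}$ (using that $x\in\mathcal{V}^{u,N}_{n^{1/d}}$), and similarly for $y$.

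Concatenating the $M+2$ bridges with intra-block paths of length $\leq Cn$ furnished by property (i) of $\mathcal{F}^{u,N}$ inside each $\mathcal{C}_{z_i}$ yields a path in $\mathcal{V}^{u,N}$ from $x$ to $y$ of total length $O(M n) = O(N)$, giving the claim with some $C'=C'(u)<\infty$. The only non-routine point is the bookkeeping on scales, requiring $n^{1/d}\ll n$, $Cn<N$, and $4n^{1/d}<n/4$, all of which hold for $N\geq N'$ since $n=N^{1/3}$ and $d\geq 3$; there is no serious obstacle.
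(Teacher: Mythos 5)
Your proposal is correct and follows essentially the same structure as the paper's proof: establish $\mathcal{C}_z\subseteq \mathcal{V}^{u,N}_{n^{1/d}}$ via the diameter lower bound from property (ii), use (ii) again together with $\mathcal{G}^{u,N}$ to bridge $x$ to $\mathcal{C}_x$ and to bridge consecutive $\mathcal{C}_{z_i}$'s through overlapping balls of radius $n/2$, then concatenate $O(N/n)$ such bridges and intra-block paths of length $O(n)$ from property (i). The paper uses milestone spacing $<n$ rather than your $n/4$, but this is an inessential cosmetic difference.
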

\begin{proof}[Proof of Lemma~\ref{l:inclusion}]
Fix $d\geq 3$ and $u>0$, and assume that the event $\mathcal F^{u,N}\cap\mathcal G^{u,N}$ occurs. 
First of all, note that for any $z\in\mathbb T_N$, by the definition \eqref{def:FuN} of $\mathcal C_z$, 
\begin{equation}\label{eq:CzVuN}
\mathcal C_z \subset \mathcal V^{u,N}_{n^{1/d}} .\
\end{equation}
Indeed, $\mathcal C_z$ is a subset of a connected set in $\mathcal V^{u,N}$ and contains vertices at distance $\geq n/2-2n^{1/d}>n^{1/d}$ 
(the last inequality holds if $N$ is large enough).
It follows from \eqref{eq:CzVuN} and the definitions of $\mathcal F^{u,N}$ and $\mathcal G^{u,N}$ that 
\begin{equation}\label{eq:yCx}
\text{for all $x\in\mathcal V^{u,N}_{n^{1/d}}$, there exists $y\in\mathcal C_x$ such that $\chemdist{u,N}(x,y) \leq |\ballZ(x,4n^{1/d})| \leq 9^d\cdot n$.}
\end{equation}
For $x,y\in\mathbb T_N$, let 
\[
\chemdist{u,N}(\mathcal C_x,\mathcal C_y) = \inf\{\chemdist{u,N}(x',y')~:~x'\in\mathcal C_x,~y'\in\mathcal C_y\} \in [0,\infty] .\
\]
Note that 
\begin{equation}\label{eq:CxCy:n}
\text{for any $x,y\in\mathbb T_N$ with $|x-y|<n$,~~ $\chemdist{u,N}(\mathcal C_x,\mathcal C_y)\leq 9^d\cdot n$.}
\end{equation}
Indeed, for any $x,y\in\mathbb T_N$ with $|x-y|<n$, there exists $z\in\ballZ(x,n/2)\cap\ballZ(y,n/2)$, therefore 
there exist $x'\in\mathcal C_x\cap\ballZ(z,n^{1/d})$ and $y'\in\mathcal C_y\cap\ballZ(z,n^{1/d})$. 
By \eqref{eq:CzVuN}, the definition of $\mathcal G^{u,N}$, and the fact that $|x'-y'|\leq 2n^{1/d}$, $\chemdist{u,N}(x',y') \leq |\ballZ(x',4n^{1/d})| \leq 9^d\cdot n$. 
This is precisely \eqref{eq:CxCy:n}. 
It is now immediate from \eqref{eq:CxCy:n} that 
\begin{equation}\label{eq:CxCy}
\text{for any $x,y\in\mathbb T_N$,~~ $\chemdist{u,N}(\mathcal C_x,\mathcal C_y)\leq 2\cdot (9^d + C)\cdot N$,}
\end{equation}
where $C=C(u)$ is the constant in \eqref{def:FuN}. 
Indeed, for any $x,y\in\mathbb T_N$, there exist $z_1,\dots, z_{m}\in\mathbb T_N$ such that $m=2\lfloor N/n\rfloor$, $z_1 = x$, $z_m = y$, and for all $1\leq i<m$, $|z_i - z_{i+1}| < n$. 
Applying \eqref{eq:CxCy:n} to each pair $z_i,z_{i+1}$ and using the definition of $\mathcal C_{z_i}$, we obtain from the triangle inequality that 
\[
\chemdist{u,N}(\mathcal C_x,\mathcal C_y) \leq \sum_{i=1}^{m-1}\chemdist{u,N}(\mathcal C_{z_i},\mathcal C_{z_{i+1}}) + (m-2)\cdot C\cdot n \leq 2\cdot (9^d + C)\cdot N ,\
\]
where $C=C(u)$ is the constant in \eqref{def:FuN}. 

It follows, in particular, from \eqref{eq:CxCy} that all the $\mathcal C_z$, $z\in \mathbb T_N$, are connected in $\mathcal V^{u,N}$. 
The event on the right-hand side of \eqref{eq:inclusion} now follows from \eqref{eq:yCx}, \eqref{eq:CxCy}, and the triangle inequality, with 
$C' = 4\cdot (9^d + C)$. 
\end{proof}
\begin{corollary}\label{cor:Cumax}
Let $\mathcal C^u_{max}$ be a connected component in $\mathcal V^{u,N}$ with the largest volume. 
It is immediate that if $\mathcal V^{u,N}_{n^{1/d}}$ is connected and has diameter greater than $n$, then 
it is the connected component in $\mathcal V^{u,N}$ with the largest volume, i.e., 
$\mathcal C^u_{max} = \mathcal V^u_{n^{1/d}}$.
Therefore, \eqref{eq:inclusion} implies that 
\begin{equation}\label{eq:inclusion:Cumax}
\mathcal F^{u,N}\cap \mathcal G^{u,N} \subseteq 
\left\{
\begin{array}{c}
\text{for all $x,y\in\mathcal C^u_{max}$, $\chemdist{u,N}(x,y)\leq C'N$, and}\\
\text{for all $x\in\mathbb T_N$, $\mathcal C^u_{max}\cap\ballZ(x,n^{1/d}) \neq\emptyset$}
\end{array}
\right\} ,\
\end{equation}
with $C'$ as in \eqref{eq:inclusion}.
\end{corollary}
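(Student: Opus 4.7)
The plan is to verify the set-theoretic identity $\mathcal C^u_{max} = \mathcal V^{u,N}_{n^{1/d}}$ on the event $\mathcal F^{u,N}\cap \mathcal G^{u,N}$, after which \eqref{eq:inclusion:Cumax} follows immediately by substitution into \eqref{eq:inclusion}. Throughout, assume that $\mathcal F^{u,N}\cap \mathcal G^{u,N}$ holds and that $N$ is large enough so that $N' \le N$ (with $N'$ from Lemma~\ref{l:inclusion}) and $n = N^{1/3} < N/2$.

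First, I would observe that Lemma~\ref{l:inclusion} already packages the two geometric facts that are needed. By the first conclusion on the right-hand side of \eqref{eq:inclusion}, any two vertices of $\mathcal V^{u,N}_{n^{1/d}}$ are joined by a path in $\mathcal V^{u,N}$ of length at most $C'N$; in particular they lie in the same connected component of $\mathcal V^{u,N}$, so $\mathcal V^{u,N}_{n^{1/d}}$ is a single connected component. By the second conclusion of \eqref{eq:inclusion}, this component intersects every ball of radius $n^{1/d}$ in $\mathbb T_N$; taking two antipodal points in $\mathbb T_N$ and using $n^{1/d} \ll N$ shows that its diameter is at least of order $N$, and in particular exceeds $n$.

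Next I would compare volumes. By definition of $\mathcal V^{u,N}_{n^{1/d}}$ in \eqref{def:setr}, every connected component of $\mathcal V^{u,N}$ \emph{not} contained in $\mathcal V^{u,N}_{n^{1/d}}$ has $\ell^1$-diameter strictly less than $n^{1/d}$, hence volume at most $(2n^{1/d})^d = 2^d n = 2^d N^{1/3}$. On the other hand, since $\mathcal V^{u,N}_{n^{1/d}}$ meets every ball of radius $n^{1/d}$ in $\mathbb T_N$, a standard covering of $\mathbb T_N$ by $O\bigl(N^d/n\bigr) = O\bigl(N^{d-1/3}\bigr)$ such balls produces at least that many distinct points in $\mathcal V^{u,N}_{n^{1/d}}$; thus its volume is at least $c N^{d-1/3}$. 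For $d\geq 3$ (indeed $d\geq 2$) the inequality $c N^{d-1/3} > 2^d N^{1/3}$ holds for all large $N$, so $\mathcal V^{u,N}_{n^{1/d}}$ is the unique connected component of $\mathcal V^{u,N}$ of maximal volume, i.e.\ $\mathcal C^u_{max} = \mathcal V^{u,N}_{n^{1/d}}$. Plugging this identity into the right-hand side of \eqref{eq:inclusion} yields \eqref{eq:inclusion:Cumax}.

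The argument is genuinely routine: the real work was done in Lemma~\ref{l:inclusion}, and the only thing to be careful about is the volume comparison, which is where the choice $n = N^{1/3}$ (much smaller than $N$ but much larger than a constant) is convenient but not critical. No new probabilistic estimate is needed beyond what is already on the event $\mathcal F^{u,N}\cap\mathcal G^{u,N}$.
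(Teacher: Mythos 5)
Your proposal is correct and follows essentially the same route as the paper: on the event $\mathcal F^{u,N}\cap \mathcal G^{u,N}$ you identify $\mathcal C^u_{max}$ with $\mathcal V^{u,N}_{n^{1/d}}$ (connectedness from the chemical-distance bound in \eqref{eq:inclusion}, maximality of volume by comparison with the components of diameter smaller than $n^{1/d}$) and then substitute into \eqref{eq:inclusion}, exactly as the paper's one-line justification intends. The only difference is that you lower-bound the volume of $\mathcal V^{u,N}_{n^{1/d}}$ by $cN^{d-1/3}$ using the ubiquity part of \eqref{eq:inclusion} (for which you should take pairwise disjoint balls, i.e.\ a packing rather than a covering, to guarantee distinct points), whereas the paper invokes only connectedness plus diameter $>n$; your variant is, if anything, the safer one, since the purely diameter-based comparison (volume $\geq n$ versus roughly $n$ for small components) is tight only up to constants.
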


\medskip

Now we have all the ingredients to prove Theorem~\ref{thm:chemdist:torus}. 
\begin{proof}[Proof of Theorem~\ref{thm:chemdist:torus}]
In \cite[Definition 2.4]{teixeira_windisch} a notion of strongly supercritical values of $u$ was introduced, 
which is similar in spirit to assumption \s{} for $\mathcal V^u$, but stronger. 
In particular, it is not yet known that strongly supercritical $u$'s exist for $d\in\{3,4\}$. 
We will not give this definition here, but just mention that 
analogously to \cite[(2.20)-(2.22)]{teixeira_windisch} one obtains that for any 
strongly supercritical $u$ and for any $\kappa>0$, 
\begin{equation}\label{eq:GuN}
\lim_{N\to\infty} N^\kappa \left(1- \mathbb{P}^{u,N}\left[\mathcal G^{u,N}\right]\right) = 0 .\
\end{equation}
In particular, we conclude from \eqref{eq:FuN}, \eqref{eq:inclusion:Cumax}, and \eqref{eq:GuN} that 
for any $u<\overline u$ which is strongly supercritical, \eqref{eq:chemdist:torus} holds. 
It remains to say that by \cite[Theorems 3.2 and 3.3]{Teixeira} (see \cite[(1.11)]{teixeira_windisch}), 
\begin{equation*}
 \text{for all $d \geq 5$,
 there exists
$\widehat u>0$ such that every $u \in (0, \widehat u)$ is strongly supercritical.}
\end{equation*}
This completes the proof of Theorem~\ref{thm:chemdist:torus} with $\widetilde u = \min(\overline u, \widehat u)$. 
\end{proof}

\section{Renormalization}\label{section:renorm}

In this short section we set up a general multi-scale renormalization scheme and state Theorem \ref{thm:badseed:proba}.

\medskip

Let $l_k,r_k,L_k$, $k\geq 0$ be sequences of positive integers such that 

\begin{equation}\label{def:Lk}
l_k>r_k, \qquad L_k = l_{k-1}\cdot L_{k-1}, \quad k \geq 1.
\end{equation}

For $k\geq 0$, we introduce the renormalized lattice graph $\GG_k$ by
\begin{equation*}
\GG_k = L_k \Z^d = \{L_kx ~:~ x\in\Z^d\} ,\
\end{equation*}
with edges between any pair of ($l^1$-)nearest neighbor vertices of $\GG_k$. 
For $k\geq 1$ and $x\in\GG_{k}$, let 
\begin{equation}\label{def:Lambdaxk}
\Lambda_{x,k} = \GG_{k-1}\cap(x+[0,L_k)^d) . \qquad
\left(\text{Note that $|\Lambda_{x,k}| = (l_{k-1})^d$.}\right) 
\end{equation}

\medskip

For $x\in\GG_0$, let $\badseed_x = \badseed_{x,0} = \badseed_{x,0,L_0}$ be a $\sigma(\Psi_y, y \in x + [-L_0, 3L_0)^d)$-measurable event. 
We call events of the form $\badseed_{x,0,L_0}$ \emph{seed events}, 
and we denote the family of seed events $(\badseed_{x,0,L_0}~:~L_0\geq 1,x\in\GG_0)$ by $\badseed$. 
For $k\geq 1$ and $x\in \GG_k$, we recursively define the events $\badseed_{x,k} = \badseed_{x,k,L_0}$ by
\begin{equation}\label{def:cascading}
\badseed_{x,k}= \bigcup_{  x_1,x_2\in \Lambda_{x,k} ; \, |x_1-x_2|_\infty > r_{k-1}\cdot L_{k-1} }
\badseed_{x_1,k-1} \cap \badseed_{x_2,k-1} ~.\
\end{equation}
(For simplicity, we omit the dependence of $\badseed_{x,k}$ on $L_0$ from the notation.)
The reader should think of the event $\badseed_{x,k}$ as ``unfavorable".
Informally,  \eqref{def:cascading} states that an unfavorable event inside a box of scale $L_k$ implies 
the occurrence of two unfavorable events inside sub-boxes of scale $L_{k-1}$ that are sufficiently far apart from each other.
By induction on $k$, the event $\badseed_{x,k}$ is $\sigma(\Psi_y, y \in x + [-L_0,L_k+2L_0)^d)$-measurable. 

\medskip

The following theorem 
helps us to give a good upper bound on the $\mathbb P^u$-probability
of some large-scale unfavorable events  
 $\badseed_{x,k}, x \in \GG_k$ in the case when the events
 $\badseed_x, x \in \GG_0$, are $\mathbb P^{u'}$-unlikely for any $u'$ in a small neighborhood of $u$.
\begin{theorem}\label{thm:badseed:proba}
Assume that the probability measures $\mathbb P^u$, $u\in(a,b)$, satisfy conditions \pp{} and \ppp{}. 
Let $u\in(a,b)$ and $\delta\in(0,1)$. 
Let 
\begin{equation}\label{def:scales}
\scexp = \lceil 1/\epsP \rceil,\qquad l_k = l_0\cdot 4^{k^\scexp},\qquad r_k = r_0\cdot 2^{k^\scexp},\quad k\geq 0,
\end{equation}
where $\epsP$ is defined in \ppp{}.
Let $\badseed_x,$ $x \in \GG_0,$ be all increasing events and $u' = (1+\delta)u$, 
or all decreasing events and $u' = (1-\delta)u$. If $u'\in(a,b)$ and 
\begin{equation}\label{eq:decoupling:condition}
\liminf_{L_0\to\infty} \sup_{x\in \GG_0} \mathbb P^{u'}\left[\badseed_x\right] = 0 ,\
\end{equation}
then there exist 
$C = C(\delta)<\infty$ and $L_0$ such that for any choice of $l_0,r_0\geq C$ and for all $k\geq 0$, 
\begin{equation}\label{eq:decoupling:result}
\sup_{x\in\GG_k} \mathbb P^u\left[\badseed_{x,k}\right] \leq 2^{-2^k} .\
\end{equation}
Moreover, if the limit in \eqref{eq:decoupling:condition} (as $L_0\to\infty$) exists and equals to $0$, then 
there exists 
$C' = C'(\delta,\badseed,l_0)<\infty$ such that 
the inequality \eqref{eq:decoupling:result} holds for all $l_0, r_0\geq C$, $L_0\geq C'$, and $k\geq 0$. 
\end{theorem}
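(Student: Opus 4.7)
The plan is to prove \eqref{eq:decoupling:result} by induction on $k$ along an interpolating sequence of parameters, then transfer the estimate back to $u$ using \pp{}. I describe the argument in the increasing case ($u' = (1+\delta)u$); the decreasing case is symmetric. First, from the recursive definition \eqref{def:cascading} and the fact that increasing events are closed under finite unions and intersections, a straightforward induction on $k$ shows that every $\badseed_{x,k}$ is an increasing event, which is what lets us apply \eqref{eq:decorrelation:increasing} at every scale. Next, set up the interpolating parameters by $v_0 = u'$ and $v_k = v_{k-1}/(1+r_{k-1}^{-\constP})$, so that the pair $(v_{k-1}, v_k)$ automatically satisfies \eqref{eq:uwidehatu} at separation level $R = r_{k-1}$. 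Because $r_k = r_0 \cdot 2^{k^\scexp}$ with $\scexp \geq 1/\epsP$, one has $\sum_j r_j^{-\constP} = O(r_0^{-\constP})$, hence $\prod_{j \geq 0}(1+r_j^{-\constP}) \leq 1+\delta$ provided $r_0 \geq C(\delta)$. Consequently $v_k \geq u$ for every $k$, and \pp{} yields $\mathbb P^u[\badseed_{x,k}] \leq \mathbb P^{v_k}[\badseed_{x,k}]$ throughout.

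Setting $a_k = \sup_{x \in \GG_k} \mathbb P^{v_k}[\badseed_{x,k}]$, a union bound over the at most $l_{k-1}^{2d}$ pairs in \eqref{def:cascading}, combined with \eqref{eq:decorrelation:increasing} applied at scale $L = L_{k-1}$ (note that the support box of $\badseed_{x_i,k-1}$ fits inside $\ballZ(\cdot, 10 L_{k-1})$) with $R = r_{k-1} \geq \RP$ and parameter pair $(v_{k-1}, v_k)$, produces the recursion
\[
a_k \,\leq\, l_{k-1}^{2d}\, a_{k-1}^2 \;+\; l_{k-1}^{2d}\, e^{-\funcP(L_{k-1})}.
\]
The main obstacle is that the naive hypothesis $a_k \leq 2^{-2^k}$ is not preserved: the square $a_{k-1}^2$ exactly saturates it, leaving no room to absorb the combinatorial prefactor $l_{k-1}^{2d}$. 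I will overcome this by carrying a strictly stronger hypothesis $a_k \leq 2^{-(1+\gamma_k) 2^k}$, with a sequence $\gamma_k$ that decreases by roughly $(2d\log_2 l_k + 1)/2^{k+1}$ per step; the calculation asks for $\gamma_{k+1} \leq \gamma_k - (2d\log_2 l_k + 1)/2^{k+1}$. The choice $\scexp = \lceil 1/\epsP \rceil$ in \eqref{def:scales} makes $\log l_k \sim k^\scexp$, so the series $\sum_k (2d \log_2 l_k + 1)/2^{k+1}$ converges, and a strictly positive limit $\gamma_\infty > 0$ can be secured by choosing the initial value $\gamma_0$ large enough depending only on $\delta, d$ and $\scexp$. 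The base case $a_0 \leq 2^{-1-\gamma_0}$ is then guaranteed by hypothesis \eqref{eq:decoupling:condition} on taking $L_0$ sufficiently large.

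To close the argument, the error term $l_{k-1}^{2d} e^{-\funcP(L_{k-1})}$ must be shown to be dominated by $\tfrac{1}{2} \cdot 2^{-(1+\gamma_k)2^k}$. Since $L_k = L_0 \prod_{j<k} l_j$ one has $\log L_k \asymp k^{\scexp+1}$, and $(\scexp+1)\epsP \geq 1+\epsP > 1$, so $\funcP(L_k) \geq e^{(\log L_k)^\epsP} \gtrsim e^{k^{1+\epsP}}$; this dominates $2^{k+1}$ for all $k$ once $L_0$ is taken large (and trivially for small $k$ by further enlarging $L_0$). Combining the two estimates closes the induction and gives $a_k \leq 2^{-2^k}$, from which the conclusion $\sup_{x \in \GG_k} \mathbb P^u[\badseed_{x,k}] \leq 2^{-2^k}$ follows by \pp{}. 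The final sentence of the theorem uses the strengthening from $\liminf = 0$ to an actual limit only to provide a \emph{uniform} threshold $L_0 \geq C'(\delta, \badseed, l_0)$ beyond which the base case $a_0 \leq 2^{-1-\gamma_0}$ still holds; no other part of the argument changes.
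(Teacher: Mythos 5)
Your argument is essentially identical to the paper's proof: both introduce the interpolating sequence of parameters $v_k$ (called $u_k$ in the paper), derive the recursion $a_k \le l_{k-1}^{2d}\bigl(a_{k-1}^2 + e^{-\funcP(L_{k-1})}\bigr)$ via a union bound over pairs in $\Lambda_{x,k}$ combined with \eqref{eq:decorrelation:increasing}, and close the induction with a strengthened hypothesis of the form $a_k \le 2^{-\kappa_k 2^k}$ with $\kappa_k > 1$ decreasing to a positive limit (the paper chooses $\kappa_k = 1 + l_0\sum_{i>k}i^{-2}$, which plays exactly the role of your $1+\gamma_k$). The one imprecision is your claim that $\gamma_0$ can be chosen depending only on $\delta$, $d$ and $\scexp$: since $\log_2 l_k = \log_2 l_0 + 2k^\scexp$, the telescoped sum $\sum_k (2d\log_2 l_k + 1)/2^{k+1}$ contains a contribution proportional to $\log_2 l_0$, so $\gamma_0$ must also grow with $l_0$ (as the paper's $\kappa_0 = 1 + l_0\pi^2/6$ makes explicit) — but this is harmless, because $L_0$ is chosen last to validate the base case and is allowed to depend on $l_0$, both in the theorem's statement (see the dependence $C'(\delta,\badseed,l_0)$) and in the paper's own proof.
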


\begin{remark}
The choice of $l_k$ and $r_k$ in \eqref{def:scales} is somewhat arbitrary and made to make some of the calculations more transparent, 
nevertheless, for the proof of Theorem~\ref{thm:badseed:proba}, we want that 
\begin{itemize}\itemsep0pt
\item[(a)] 
$\prod_{k=0}^{\infty} \left( 1+ r_k^{-\constP} \right)$ is close to $1$, where $\constP$ is defined in \eqref{eq:uwidehatu} (see \eqref{eq:condr0}), 
\item[(b)] 
$l_k$ grow fast enough (see \eqref{eq:condl0L0:b}, where the choice of $\scexp$ becomes clear), but 
\item[(c)]
$l_k$ do not grow too rapidly (see \eqref{eq:condl0L0:a}).
\end{itemize}

For the proof of Theorem~\ref{thm:Sinfty:chemdist}, we will also want that 
\begin{itemize}\itemsep0pt
\item[(d)] 
$\prod_{k=1}^{\infty} \left( 1+ \frac{r_{k-1}}{l_{k-1}} \right)<\infty$  (see \eqref{def_eq:N_double_prime}), 
\item[(e)] 
$l_0>4r_0$ (see Lemma~\ref{l:fromGktoGk-1}), and 
\item[(f)]
$l_k$ do not grow too fast (see \eqref{eq:Ls:lowerbound} and \eqref{eq:s:bounds}).
\end{itemize}
\end{remark}

\medskip

We postpone the proof of Theorem~\ref{thm:badseed:proba} until Section~\ref{sec:badseed:proof}. 
Our particular choice of the monotone seed events (and the verification of the condition
\eqref{eq:decoupling:condition} for these seed events) is the subject of Section \ref{section:conn_patterns}.

\bigskip

\section{Connectivity patterns in $\set$}
\label{section:conn_patterns}

The strategy for the proof of Theorem~\ref{thm:Sinfty:chemdist} is to define a $\mathbb P^u$-likely event $\mathcal H$, which implies 
the event on the left-hand side of \eqref{eq:Sinfty:chemdist}. 
The definition of $\mathcal H$ in Section~\ref{section:proof_of_chemdist_thm} will be based on a certain coarse-graining of $\set$. 
We will partition the vertices of the coarse-grained lattice $\GG_0$ into $0$-good and $0$-bad in such a way that 
\begin{enumerate}[(a)] 
\item if $x$ is a $0$-good vertex of $\GG_0$ then
 we can identify a macroscopic connected component $\mathcal C_x$ of $\set \cap (x + [0,L_0)^d)$ with the property that
 components $\mathcal{C}_x$, $\mathcal{C}_y$ corresponding to adjacent good vertices $x,y \in \GG_0$ are locally connected to each other in $\set$, and  
\item the  subgraph of $\GG_0$ spanned by good vertices contains a dense network of ``highways", i.e., nearest neighbor paths whose length
is comparable to the graph distance between their endpoints in $\GG_0$.  
\end{enumerate}
The crucial role in the definition of the event $\mathcal H$ will be played by Theorem~\ref{thm:badseed:proba}. 
Since Theorem~\ref{thm:badseed:proba} is only about increasing or decreasing events, 
we will define the event that $x\in\GG_0$ is $0$-good as an intersection of 
an increasing event $A^u_x$ and a decreasing event $B^u_x$. 
The event $A^u_x$ roughly asserts that the $L_0$-neighborhood of $x$ has a macroscopic connected component of $\set$ (see Section \ref{subsection:Aux}),
and the event $B^u_x$ roughly asserts that the $L_0$-neighborhood of $x$ has at most one macroscopic connected component of $\set$ 
(see Section \ref{subsection:Bux}).

The geometric details of the above listed plan  will be carried out in Section \ref{section:good_and_bad}.
The property  of $0$-good vertices outlined in (a) will be precisely formulated and proved in Lemma~\ref{l:fromG0toZd}. 
We will obtain the construction of the  ``highways" mentioned in (b) by iteratively applying  the construction of Lemma~\ref{l:fromGktoGk-1}.

\medskip

Before we proceed with the definition and properties of the events $A^u_x$ and $B^u_x$, observe that by 
\eqref{def:etau} and \eqref{def:setr}, 
$\mathbb P^u [0\in \set_r]\geq \eta(u)$ and $\mathbb P^u [0\in\set_r] \to \eta(u)$ as $r\to\infty$.
Therefore, if the measures $\mathbb P^u$, $u\in(a,b)$, satisfy condition \p{}, 
then by an appropriate ergodic theorem (see, e.g., \cite[Theorem~VIII.6.9]{DunfordSchwartz}), 
we get for any $u\in(a,b)$, 
\begin{equation}\label{eq:etaergodicity}
\lim_{L \to \infty} \frac{1}{L^d} 
\sum_{ x \in [0,L)^d} \mathds{1}\left( \,  x \in\set_L \, \right)
    \; \stackrel{\mathbb{P}^u\text{-a.s.}}{=} \; 
\lim_{L \to \infty} \frac{1}{L^d} 
\sum_{ x \in [0,L)^d} \mathds{1}\left( \,  x \in\set_\infty \, \right)
    \; \stackrel{\mathbb{P}^u\text{-a.s.}}{=} \; \eta(u),
\end{equation}
 where the $\{0,1\}$-valued random variable
 $\mathds{1}( A )$ denotes the indicator  of the event $A \in \mathcal F$.

\subsection{The increasing events $A^u_x$}\label{subsection:Aux}

\begin{definition}\label{def:Axu}
For $x\in \GG_0$ and $u>0$, let $A^u_x\in\mathcal F$ be the event that
\begin{itemize}\itemsep0pt
\item[(a)]
for each $e\in\{0,1\}^d$, the set $\set_{L_0}\cap(x+eL_0 + [0,L_0)^d)$
contains a connected component with at least $\frac 34 \eta(u) L_0^d$ vertices,
\item[(b)]
all of these $2^d$ components are connected in 
$\set \cap(x+[0,2L_0)^d)$.
\end{itemize}
\end{definition}
Note that for each $y'\in\Z^d$, 
the event 
$\left\{\, y' \in \set_{L_0} \,  \right\}$
is increasing and measurable with respect to $\sigma(\Psi_y~:~y\in \ballZ(y',L_0))$, thus
 $A^u_x$ is increasing and measurable with respect to $\sigma(\Psi_y~:~y\in x+[-L_0,3L_0)^d)$. 

\bigskip

For $u>0$ and $x\in \GG_0$, let 
\[
\overline A^u_{x,0} = (A^u_{x})^c ,\
\]
and for $u>0$, $k\geq 1$, and $x\in\GG_k$ let  
\begin{equation}\label{def:Auxk}
\overline A^u_{x,k} = 
\bigcup_{x_1,x_2\in \Lambda_{x,k} ; \, |x_1-x_2|_\infty > r_{k-1} \cdot L_{k-1}}
  \overline A^u_{x_1,k-1} \cap \overline A^u_{x_2,k-1}   \; .\
\end{equation}
The events $\overline A^u_{x,k}$ are decreasing and measurable with respect to $\sigma(\Psi_y,~ y\in x+[-L_0,L_k+2L_0)^d)$. 

\begin{lemma}\label{l:Auxk}
Assume that the measures $\mathbb P^u$, $u\in(a,b)$, satisfy conditions \p{} -- \ppp{} and \s{} -- \sss{}. 
Let $l_k$, $r_k$, and $L_k$ be defined as in \eqref{def:Lk} and \eqref{def:scales}. 
Then for each $u\in(a,b)$, there exist $C = C(u)<\infty$ and $C' = C'(u,l_0)<\infty$ such that for all 
$l_0,r_0\geq C$, $L_0\geq C'$, and $k\geq 0$, 
\[
\mathbb P^u\left[\overline A^u_{0,k}\right] \leq 2^{-2^k} .\
\]
\end{lemma}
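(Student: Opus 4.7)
The plan is to apply Theorem~\ref{thm:badseed:proba} to the decreasing seed events $\badseed_x := \overline A^u_{x,0} = (A^u_x)^c$. Since the recursion \eqref{def:Auxk} coincides verbatim with the cascading rule \eqref{def:cascading}, the cascaded events satisfy $\badseed_{x,k} = \overline A^u_{x,k}$, so conclusion \eqref{eq:decoupling:result} of Theorem~\ref{thm:badseed:proba} translates directly into the bound claimed in Lemma~\ref{l:Auxk}; as the seed family is canonically determined by $u$ and $L_0$, the constants $C$ and $C'$ furnished by Theorem~\ref{thm:badseed:proba} specialize to $C(u)$ and $C'(u,l_0)$ as in the statement.

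Since the seeds are decreasing, the comparison parameter in Theorem~\ref{thm:badseed:proba} takes the form $u' := (1-\delta)u$. Using the positivity and continuity of $\eta$ furnished by \sss{}, I would pick $\delta = \delta(u) > 0$ small enough that $u' \in (a,b)$ and $\eta(u') > \tfrac{7}{8}\eta(u)$; this choice fixes the constant $C(u)$. The task then reduces to verifying
\[
\lim_{L_0 \to \infty} \mathbb P^{u'}\!\left[\,\overline A^u_{0,0}\,\right] = 0,
\]
which, combined with translation invariance (\p{}), yields the stronger ``moreover'' hypothesis of Theorem~\ref{thm:badseed:proba} needed for the $L_0$-uniform bound.

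To establish the limit, I would decompose $\overline A^u_{0,0}$ according to which of the two clauses of Definition~\ref{def:Axu} fails. For clause~(a), the pointwise ergodic theorem \eqref{eq:etaergodicity}, applied both to each octant $eL_0 + [0,L_0)^d$ and to its $(1-\varepsilon)$-contraction for a fixed small $\varepsilon = \varepsilon(\eta(u),d)$, shows that under $\mathbb P^{u'}$ the density of $\set_{L_0}$ in these regions converges almost surely to $\eta(u') > \tfrac{7}{8}\eta(u)$. Combining this with the local uniqueness estimate \eqref{eq:C2:epsilon} on the scale of a single octant (with the outer connecting ball contracted to sit inside the octant, and with the observation that the connecting paths automatically lie in $\set_{L_0}$, since they are part of $\set$-components of diameter $\geq L_0$), one obtains with high probability a single connected component of $\set_{L_0}\cap(eL_0 + [0,L_0)^d)$ of size at least $\eta(u')(1-\varepsilon)^d L_0^d - o(L_0^d) > \tfrac{3}{4}\eta(u) L_0^d$. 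For clause~(b), a second application of \eqref{eq:C2:epsilon} on the scale of the box $x+[0,2L_0)^d$, after a similar $(1-\varepsilon)$-contraction so that the outer connecting ball remains inside the box, glues the $2^d$ octant-components into a single connected subset of $\set \cap (x+[0,2L_0)^d)$, because each octant-component from clause~(a) is too large to avoid the contracted region. Both failure probabilities decay faster than any polynomial in $L_0$ by \eqref{eq:funcS}, so $\mathbb P^{u'}[\overline A^u_{0,0}] \to 0$, and Theorem~\ref{thm:badseed:proba} completes the argument. The only genuine technical care is geometric bookkeeping: keeping the outer ball of \eqref{eq:C2:epsilon} inside the boxes prescribed by Definition~\ref{def:Axu}, which is exactly what the $(1-\varepsilon)$-contractions above are arranged to ensure.
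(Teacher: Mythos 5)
Your proposal is correct and follows essentially the same route as the paper: reduce to the seed estimate $\mathbb P^{(1-\delta)u}[A^u_0]\to 1$ via Theorem~\ref{thm:badseed:proba} for decreasing seeds, choose $\delta$ (and a contraction parameter $\varepsilon$) using \sss{}, then verify the two clauses of Definition~\ref{def:Axu} using the ergodic density estimate \eqref{eq:etaergodicity} together with two applications of the local-uniqueness bound \eqref{eq:C2:epsilon} on the octant and on the $2L_0$-box, respectively. The paper implements the contractions slightly differently (introducing explicit inner boxes $Q_e$ and an intermediate component $\widetilde{\mathcal C_e}$ rather than a ``too large to avoid the shell'' size argument), and it does not claim any rate faster than convergence to $1$ for the seed estimate — your ``faster than any polynomial'' remark is stronger than needed and not justified by the a.s.\ ergodic theorem alone — but these are cosmetic differences, not gaps.
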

\begin{proof}[Proof of Lemma~\ref{l:Auxk}]
It suffices to show that for any $u\in(a,b)$ there exists $\delta = \delta(u)>0$ such that $(1-\delta)u>a$ and 
\begin{equation}\label{eq:Axu}
\mathbb P^{(1-\delta)u}\left[A_0^u\right] \to 1 ,\quad \mbox{as} \quad L_0\to\infty .\
\end{equation}
Indeed, once \eqref{eq:Axu} is shown, 
the result will follow from Theorem~\ref{thm:badseed:proba} applied to the 
decreasing seed events $\badseed_x = \overline A^u_{x,0},\, x \in \GG_0$.

Let $u\in(a,b)$. 
By the condition \sss{}, we can choose $\varepsilon >0$ and $\delta=\delta(u)>0$ so that if we define $u' = (1-\delta)u$, 
then we have
\begin{equation}\label{eq:epsilondelta}
u' > a \quad\mbox{and}\quad
(1-4\varepsilon)^d\eta\left(u'\right) > \frac34 \eta(u) .\
\end{equation}
With such a choice of $\varepsilon$ and $\delta$, for $L_0\geq 1$, we obtain 
\[
\eta(u') (L_0 - 4\lfloor \varepsilon L_0\rfloor)^d >
 \frac34 \eta(u) L_0^d .\
\]
We consider the boxes 
\[
Q_e=eL_0 + [\,2\lfloor \varepsilon L_0\rfloor,L_0-2\lfloor\varepsilon L_0\rfloor\,)^d, \qquad e\in\{0,1\}^d.
\]
The volume of $Q_e$ is $|Q_e|=(L_0 - 4\lfloor \varepsilon L_0\rfloor)^d$.
 Using \eqref{eq:etaergodicity} and \eqref{eq:epsilondelta}, we get that 
with $\mathbb P^{u'}$-probability tending to $1$ as $L_0\to\infty$, each of the boxes $Q_e$, $e\in\{0,1\}^d$ contains at least
$\frac34 \eta(u) L_0^d$ vertices of $\set_{L_0}$.

By \eqref{eq:C2:epsilon}, 
we obtain that with $\mathbb P^{u'}$-probability tending to $1$ as $L_0\to\infty$, 
for all $e\in \{0,1\}^d$, $\set_{L_0}\cap Q_e$ is a subset of a connected component 
$\widetilde{\mathcal C_e}$ of 
\[
\set_{L_0}\cap\left(eL_0 + [\,\lfloor \varepsilon L_0\rfloor,L_0-\lfloor\varepsilon L_0\rfloor\,)^d\right) .
\]
By what is proved earlier, with $\mathbb P^{u'}$-probability tending to $1$ as $L_0\to\infty$, for all $e\in\{0,1\}^d$, we have 
$|\widetilde{\mathcal C_e}| \geq \frac34 \eta(u) L_0^d$. 

Moreover, again by \eqref{eq:C2:epsilon}, with $\mathbb P^{u'}$-probability tending to $1$ as $L_0\to\infty$, 
all $\widetilde{\mathcal C_e}$, $e\in\{0,1\}^d$, are in the same connected component of 
$\set_{L_0}\cap [0,2L_0)^d$. 

By Definition~\ref{def:Axu}, this implies \eqref{eq:Axu} and completes the proof of Lemma~\ref{l:Auxk}. 
\end{proof}

\subsection{The decreasing events $B^u_x$}\label{subsection:Bux}

\begin{definition}\label{def:Bux}
For $x\in \GG_0$ and $u>0$, let $B^u_x\in\mathcal F$ be the event that
for all $e\in\{0,1\}^d$, 
\[
\left|\set_{L_0}\cap(x+eL_0 + [0,L_0)^d)\right| \leq \frac54\eta(u) L_0^d .\
\]
\end{definition}
Note that the event $B^u_x$ is decreasing and measurable with respect to $\sigma(\Psi_y~:~y\in x+[-L_0,3L_0)^d)$. 

\bigskip

For $u>0$ and $x\in \GG_0$, let 
\[
\overline B^u_{x,0} = (B^u_{x})^c ,\
\]
and for $u>0$, $k\geq 1$, and $x\in\GG_k$ let  
\begin{equation}\label{def:Buxk}
\overline B^u_{x,k} = 
\bigcup_{x_1,x_2\in \Lambda_{x,k} ; \, |x_1-x_2|_\infty > r_{k-1} \cdot L_{k-1}}
  \overline B^u_{x_1,k-1} \cap \overline B^u_{x_2,k-1}   \; .\
\end{equation}
The events $\overline B^u_{x,k}$ are increasing and measurable with respect to $\sigma(\Psi_y,~ y\in x+[-L_0,L_k+2L_0)^d)$.

\begin{lemma}\label{l:Buxk} 
Assume that the measures $\mathbb P^u$, $u\in(a,b)$, satisfy conditions \p{} -- \ppp{} and \sss{}.
Let $l_k$, $r_k$, and $L_k$ be defined as in \eqref{def:Lk} and \eqref{def:scales}. 
For each $u\in(a,b)$, there exist $C = C(u)<\infty$ and $C' = C'(u,l_0)<\infty$ such that for all 
$l_0,r_0\geq C$, $L_0\geq C'$, and $k\geq 0$, 
\[
\mathbb P^u\left[\overline B^u_{0,k}\right] \leq 2^{-2^k} .\
\]
\end{lemma}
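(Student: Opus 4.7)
The plan is to mirror the proof of Lemma~\ref{l:Auxk}, but with the roles of increasing/decreasing reversed. The events $B^u_x$ are decreasing, so the seed events $\badseed_x := \overline B^u_{x,0} = (B^u_x)^c$ are increasing, and the cascading definition \eqref{def:Buxk} matches \eqref{def:cascading}. Thus, in view of Theorem~\ref{thm:badseed:proba} (applied with increasing seeds and parameter $u' = (1+\delta)u$), it suffices to exhibit some $\delta = \delta(u) > 0$ with $(1+\delta)u \in (a,b)$ such that
\begin{equation*}
\mathbb P^{(1+\delta)u}\!\left[B^u_0\right] \longrightarrow 1 \quad \text{as } L_0\to\infty,
\end{equation*}
which is the analogue of \eqref{eq:Axu}. (Since the seed events $\overline B^u_{x,0}$ are shift-equivariant in $x\in\GG_0$, condition \eqref{eq:decoupling:condition} then holds.)

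To produce such a $\delta$, I would use the continuity of $\eta$ provided by condition \sss{}. Choose $\delta = \delta(u) > 0$ small enough that $u' := (1+\delta)u < b$ and
\begin{equation*}
\eta(u') \;\leq\; \tfrac{9}{8}\,\eta(u) \;<\; \tfrac{5}{4}\,\eta(u).
\end{equation*}
With this choice, fix any $e\in\{0,1\}^d$. Since $\mathbb P^{u'}$ is shift-invariant and ergodic by \p{}, the ergodic theorem identity \eqref{eq:etaergodicity} (translated to the box $eL_0+[0,L_0)^d$ by shift-invariance) gives, as $L_0\to\infty$,
\begin{equation*}
\frac{1}{L_0^d}\left|\set_{L_0}\cap\bigl(eL_0 + [0,L_0)^d\bigr)\right| \;\xrightarrow{\mathbb P^{u'}\text{-a.s.}}\; \eta(u') \;<\; \tfrac{5}{4}\,\eta(u).
\end{equation*}
A union bound over the $2^d$ corners $e\in\{0,1\}^d$ then yields $\mathbb P^{u'}[B^u_0] \to 1$, as required.

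Having verified the hypothesis of Theorem~\ref{thm:badseed:proba} for the increasing seeds $\badseed_x = \overline B^u_{x,0}$ at the larger parameter $u' = (1+\delta)u$, the conclusion \eqref{eq:decoupling:result} of that theorem reads $\sup_{x\in\GG_k}\mathbb P^u[\overline B^u_{x,k}] \leq 2^{-2^k}$ for all $l_0,r_0\geq C$ and all sufficiently large $L_0$, which is exactly the desired bound. The main subtlety, as in Lemma~\ref{l:Auxk}, is simply to sprinkle in the correct direction: because $B^u_x$ is decreasing, the sprinkling parameter must be \emph{increased} from $u$ to $(1+\delta)u$ (rather than decreased, as in the case of the increasing events $A^u_x$), and the availability of such a $\delta$ with $(1+\delta)u < b$ is precisely what continuity of $\eta$ on the open interval $(a,b)$ in \sss{} guarantees.
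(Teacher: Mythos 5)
Your proposal is correct and follows essentially the same argument as the paper: reduce to $\mathbb P^{(1+\delta)u}[B^u_0]\to 1$ via Theorem~\ref{thm:badseed:proba} applied to the increasing seeds $\overline B^u_{x,0}$, choose $\delta$ by continuity of $\eta$ (condition \sss{}) so that $\eta((1+\delta)u)<\tfrac{5}{4}\eta(u)$, and conclude by the ergodic density statement \eqref{eq:etaergodicity} plus a union bound over the $2^d$ corners. The intermediate constant $\tfrac{9}{8}$ you insert is harmless but unnecessary; otherwise this matches the paper's proof.
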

\begin{proof}[Proof of Lemma~\ref{l:Buxk}]
It suffices to show that for any $u\in(a,b)$ there exists $\delta=\delta(u)>0$ such that $(1+\delta)u<b$ and 
\begin{equation}\label{eq:Bxu}
\mathbb P^{(1+\delta)u}\left[B_0^u\right] \to 1 ,\quad \mbox{as} \quad L_0\to\infty .\
\end{equation}
Indeed, once \eqref{eq:Bxu} is shown, the result will 
follow from Theorem~\ref{thm:badseed:proba} applied to the
increasing seed events $\badseed_x = \overline B^u_{x,0},\, x \in \GG_0$.

Let $u\in(a,b)$. 
By the condition \sss{}, we can choose $\delta=\delta(u)>0$ so that with $u' = (1+\delta)u$, we have 
\begin{equation}\label{eq:delta54}
u' < b\quad\mbox{and}\quad\eta\left(u'\right) < \frac{5}{4} \eta(u) .\
\end{equation}
Therefore, \eqref{eq:etaergodicity} and \eqref{eq:delta54} imply that, with $\mathbb P^{u'}$ probability tending to $1$ as $L_0\to\infty$, 
for all $e\in\{0,1\}^d$, 
\[
\left|\set_{L_0}\cap\left(eL_0 + [0,L_0)^d\right)\right| < \frac54\eta(u) L_0^d .\
\]
By Definition~\ref{def:Bux}, this is precisely \eqref{eq:Bxu}. 
The proof of Lemma~\ref{l:Buxk} is complete.
\end{proof}

\bigskip

\section{Good and bad vertices}
\label{section:good_and_bad}

The aim of this section is to  provide the geometric details of one iteration of the recursive construction
 of short paths in $\set$. We follow the strategy outlined in the introduction of Section \ref{section:conn_patterns}.
Throughout this section, $l_k$, $r_k$, and $L_k$, $k\geq 1$, are positive integers satisfying \eqref{def:Lk}.

\begin{definition}\label{def:good}
Let $u\in(a,b)$. For $k\geq 0$, we say that $x\in\GG_k$ is $k$-{\it bad} if the event 
\[
\overline A^u_{x,k}\cup \overline B^u_{x,k}
\]
occurs. Otherwise, we say that $x$ is $k$-{\it good}. 
Note that the event $\{\text{$x$ is $k$-good}\}$ is measurable with respect to 
$\sigma(\Psi_y~:~ y\in x+ [-L_0, L_k + 2L_0)^d)$, see below \eqref{def:Auxk} and \eqref{def:Buxk}.
\end{definition}

In the remainder of this section we prove useful geometric properties of $k$-good vertices. 
All the results of this section are purely deterministic. 
We fix the parameter $u\in(a,b)$. 

\begin{lemma}\label{l:fromG0toZd}
Let $x$ and $y$ be nearest neighbors in $\GG_0$ and assume that they are both $0$-good. 
\begin{itemize}\itemsep0pt
 \item[(a)]
Each of the graphs $\set_{L_0}\cap(z + [0,L_0)^d)$, with $z\in \{x,y\}$, 
contains the unique connected component $\mathcal C_z$ with at least $\frac34 \eta(u) L_0^d$ vertices, and
\item[(b)] 
$\mathcal C_x$ and $\mathcal C_y$ are connected in the graph $\set\cap ((x+[0,2L_0)^d)\cup(y+[0,2L_0)^d))$. 
\end{itemize}
\end{lemma}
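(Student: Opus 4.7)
My plan is to unpack the definitions and observe that the lemma is essentially an immediate consequence of $A^u_x$, $A^u_y$ and $B^u_x$, $B^u_y$, combined together correctly. Recall that $x \in \GG_0$ being $0$-good means precisely that both the increasing event $A^u_x$ (from Definition~\ref{def:Axu}) and the decreasing event $B^u_x$ (from Definition~\ref{def:Bux}) occur; the same holds for $y$. Since $x, y$ are nearest neighbors in $\GG_0 = L_0 \Z^d$, after possibly relabeling (the statement is symmetric in $x$ and $y$) we may assume $y = x + L_0 e_i$ for some $i \in \{1, \dots, d\}$.

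For part (a), I would first use existence: event $A^u_x$ applied with $e=0$ gives a connected component of $\set_{L_0} \cap (x + [0, L_0)^d)$ of size at least $\tfrac34 \eta(u) L_0^d$; applied with $e = e_i$ it gives such a component in $\set_{L_0} \cap (x + L_0 e_i + [0,L_0)^d) = \set_{L_0} \cap (y + [0, L_0)^d)$. For uniqueness I would invoke $B^u_x$: for each $z \in \{x, y\}$ the total number of vertices in $\set_{L_0} \cap (z + [0, L_0)^d)$ is bounded by $\tfrac54 \eta(u) L_0^d$ (using $e=0$ and $e=e_i$ respectively in the definition of $B^u_x$). If there were two distinct components of size $\geq \tfrac34 \eta(u) L_0^d$ each, the disjoint union would contain at least $\tfrac32 \eta(u) L_0^d > \tfrac54 \eta(u) L_0^d$ vertices, a contradiction. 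This lets us name the unique such component $\mathcal C_z$.

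For part (b), I would appeal to clause (b) of Definition~\ref{def:Axu} for $A^u_x$: all $2^d$ large components, one in each sub-box $x + eL_0 + [0, L_0)^d$ with $e \in \{0,1\}^d$, are connected inside $\set \cap (x + [0, 2L_0)^d)$. In particular the components corresponding to $e = 0$ and $e = e_i$, which by the uniqueness established in part (a) must coincide with $\mathcal C_x$ and $\mathcal C_y$ respectively, are connected in $\set \cap (x + [0, 2L_0)^d) \subseteq \set \cap \bigl( (x + [0, 2L_0)^d) \cup (y + [0, 2L_0)^d) \bigr)$, which is what is claimed.

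There is essentially no obstacle here: the lemma is a bookkeeping statement that translates the ``local uniqueness and connectivity'' content packaged into the events $A^u_x$ and $B^u_x$ into a deterministic geometric conclusion about neighboring good boxes. The only point that requires any care is matching the box $y + [0, L_0)^d$ with one of the $2^d$ sub-boxes of $A^u_x$, which is why one reduces to $y = x + L_0 e_i$ by symmetry at the outset. This lemma is then ready to serve as the base case of the iterative path-modification scheme carried out in Lemma~\ref{l:fromGktoGk-1}.
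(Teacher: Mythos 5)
Your proof is correct and follows essentially the same route as the paper: existence of $\mathcal{C}_x,\mathcal{C}_y$ from Definition~\ref{def:Axu}(a), uniqueness via the pigeonhole bound $2\cdot\frac34>\frac54$ from Definition~\ref{def:Bux}, and connectivity from Definition~\ref{def:Axu}(b). The only organizational difference is that you reduce to $y=x+L_0e_i$ by symmetry and then rely solely on the events $A^u_x,B^u_x$, whereas the paper keeps both orientations and invokes $A^u_x,A^u_y,B^u_x,B^u_y$ as needed; this is an entirely cosmetic variation.
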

\begin{proof}[Proof of Lemma~\ref{l:fromG0toZd}]
Let $x$ and $y$ be nearest neighbors in $\GG_0$ and assume they are both $0$-good, i.e., the events 
$A^u_x$, $B^u_x$, $A^u_y$, and $B^u_y$ occur. 

By Definition~\ref{def:Axu}, since $A^u_x$ and $A^u_y$ occur, 
the graphs $\set_{L_0}\cap(x + [0,L_0)^d)$ and $\set_{L_0}\cap(y + [0,L_0)^d)$ 
contain connected components $\mathcal C_x$ and $\mathcal C_y$ with at least $\frac34 \eta(u) L_0^d$ vertices, 
which are connected in $\set\cap((x+[0,2L_0)^d)\cup(y+[0,2L_0)^d))$.  

It remains to show that $\mathcal C_x$ and $\mathcal C_y$ are unique. 
By Definition~\ref{def:Bux}, since $B^u_x$ and $B^u_y$ occur,
each of the sets $\set_{L_0}\cap(x + [0,L_0)^d)$ and $\set_{L_0}\cap(y + [0,L_0)^d)$ 
contain at most $\frac54 \eta(u) L_0^d$ vertices. 
Since $2\cdot \frac 34 > \frac 54$, there can be at most one connected component of size 
at least $\frac34 \eta(u) L_0^d$ in each of the graphs 
$\set_{L_0}\cap(x + [0,L_0)^d)$ and $\set_{L_0}\cap(y + [0,L_0)^d)$. 
This implies the uniqueness of $\mathcal C_x$ and $\mathcal C_y$ 
and finishes the proof of Lemma~\ref{l:fromG0toZd}. 
\end{proof}

\begin{lemma}\label{l:fromGktoGk-1}
Let $k\geq 1$. 
Let $\pi = (x_0,\ldots,x_m)$ be a nearest neighbor path in $\GG_k$ of $k$-good vertices. If $l_0> 4r_0$, 
then there exists a nearest neighbor path $\pi' = (x_0',\ldots,x_n')$ in $\GG_{k-1}$ of $(k-1)$-good vertices, such that 
\begin{itemize}\itemsep0pt
\item[(a)]
$x_0'\in\Lambda_{x_0,k}$, $x_n'\in\Lambda_{x_m,k}$ (recall \eqref{def:Lambdaxk}), and 
\item[(b)]
$n\leq (1+8\cdot\frac{r_{k-1}}{l_{k-1}})\cdot l_{k-1}m$. 
\end{itemize}
\end{lemma}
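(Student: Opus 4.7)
The idea is to construct $\pi'$ as a concatenation of $m$ sub-paths, one per edge of $\pi$. I proceed inductively: pick an arbitrary $(k-1)$-good vertex $x_0' \in \Lambda_{x_0,k}$; at step $i$, given the current $(k-1)$-good endpoint $y_{i-1} \in \Lambda_{x_{i-1},k}$ produced by the previous step, I construct a nearest-neighbor $\GG_{k-1}$-path of length at most $l_{k-1}+8 r_{k-1}$ going from $y_{i-1}$ to some $(k-1)$-good $y_i \in \Lambda_{x_i,k}$, all of whose vertices are $(k-1)$-good. Concatenating yields $n \leq m(l_{k-1}+8 r_{k-1}) = (1 + 8 r_{k-1}/l_{k-1}) l_{k-1} m$, giving (b), while (a) holds by construction.

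\textbf{Structural observation.} Unwinding Definition~\ref{def:good} together with \eqref{def:Auxk}--\eqref{def:Buxk} reveals that whenever a vertex $x \in \GG_k$ is $k$-good, the set of $(k-1)$-bad vertices inside $\Lambda_{x,k}$ is contained in the union of at most two \emph{bad cubes}, one coming from the $\overline A^u_{\cdot,k-1}$-events and one from the $\overline B^u_{\cdot,k-1}$-events, each of $\ell^\infty$-diameter at most $r_{k-1} L_{k-1}$ in $\Z^d$, or equivalently of $\GG_{k-1}$-side at most $r_{k-1}+1$. The hypothesis $l_0 > 4 r_0$ combined with \eqref{def:scales} gives $l_{k-1} > 4 r_{k-1}$ for every $k \geq 1$, so the bad cubes occupy only a small fraction of each $k$-box. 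In particular, $(k-1)$-good vertices are abundant in $\Lambda_{x_0,k}$, justifying the choice of $y_0$; and in $\Lambda_{x_{i-1},k} \cup \Lambda_{x_i,k}$ the $(k-1)$-bad vertices lie in the union of at most four small cubes.

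\textbf{Constructing one sub-path.} Assume without loss of generality that $x_i - x_{i-1} = L_k e_1$. Consider the straight-line $(k-1)$-path $z_l = y_{i-1} + l L_{k-1} e_1$, $l=0,\ldots,l_{k-1}$, whose endpoint lies in $\Lambda_{x_i,k}$. A bad cube is intersected by this line precisely when the $(d-1)$-dimensional cross-section coordinate of $y_{i-1}$ lies in the projection of that cube perpendicular to $e_1$, and each such intersection is a run of at most $r_{k-1}+1$ consecutive $z_l$'s. If no bad cube is met, set $y_i = z_{l_{k-1}}$. Otherwise, for each bad run I splice in a local detour: just before the run I step by at most $r_{k-1}$ edges in some perpendicular direction to exit the cube's cross-section projection, traverse the $e_1$-extent of the bad run along the shifted row, then step back. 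One detour contributes at most $2 r_{k-1}$ extra edges, so with at most four bad runs the extra length fits within the $8 r_{k-1}$ budget per $k$-edge.

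\textbf{Main obstacle.} The delicate point is to choose the sidestep direction for each detour so that the detour itself passes only through $(k-1)$-good vertices and stays inside $\Lambda_{x_{i-1},k} \cup \Lambda_{x_i,k}$. For $d \geq 3$ there are $2(d-1) \geq 4$ signed perpendicular directions, outnumbering the at most three other bad cubes, and some direction is always compatible. The subtle case is $d=2$, where only $\pm e_2$ is available and several cubes may stack perpendicular to $e_1$ so as to block both signs; there I merge the affected individual detours into a single larger sidestep of length at most $4 r_{k-1}$, which is the maximum combined $e_2$-extent of four cubes of side $r_{k-1}$. The merged detour costs at most $2\cdot 4 r_{k-1}=8 r_{k-1}$ extra, still matching the budget, and the inequality $l_{k-1}>4r_{k-1}$ guarantees enough clearance inside the boxes to accommodate the sidestep on at least one of the two signs.
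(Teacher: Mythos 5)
Your high-level plan (replace each edge of $\pi$ by a $\GG_{k-1}$-sub-path of length $\le l_{k-1}+8r_{k-1}$) and the structural observation that a $k$-good vertex confines the $(k-1)$-bad vertices of its box to two cubes of $\GG_{k-1}$-side about $r_{k-1}$ both match the paper. But your sub-path construction is genuinely different and, as written, incomplete. The paper avoids the difficulty you flag as the ``main obstacle'' by fixing the crossing structure \emph{in advance}: for the edge $x_i\to x_{i+1}$ it chooses a perpendicular direction $e_{\alpha_i}$ and an offset $j_i\in\{0,\dots,4r_{k-1}\}$ so that the hyperplane $H^*_i$ at that offset misses all four bad cubes of $\Lambda_{x_i,k}\cup\Lambda_{x_{i+1},k}$ (pigeonhole, using $l_{k-1}>4r_{k-1}$). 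The straight crossing $y^*_i\to z^*_{i+1}$ runs inside $H^*_i$ and is therefore automatically $(k-1)$-good and in-box; inside $\Lambda_{x_i,k}$ the endpoints $z^*_i\in H^*_{i-1}$ and $y^*_i\in H^*_i$ are joined by an L-path when $\alpha_i\neq\alpha_{i-1}$, or a U-path along a third pre-chosen line $\ell^*_i$ when $\alpha_i=\alpha_{i-1}$, each leg again confined to a set chosen to miss bad cubes, of total length $\le 8r_{k-1}$.

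Your greedy detour construction, by contrast, has real gaps exactly where you acknowledge the subtlety. (i) After the last detour you ``step back'', but the point $y_{i-1}+l_{k-1}L_{k-1}e_1$ you return to need not be $(k-1)$-good --- it can lie inside a bad cube of $\Lambda_{x_i,k}$ --- so your inductive invariant that $y_i$ is $(k-1)$-good is not maintained. (ii) The count ``$2(d-1)\ge 4$ sidestep directions versus at most three other cubes'' for $d\ge 3$ ignores that a sidestep near a face of $\Lambda_{x_{i-1},k}\cup\Lambda_{x_i,k}$ is forbidden; in $d=3$ this gives up to five constraints on four directions, so ``some direction is always compatible'' is asserted, not proved. (iii) For $d=2$, the merged-sidestep bound of $4r_{k-1}$ does not follow: cubes stacked on one side of $y_{i-1}$ near a face can force a larger sidestep, and you give no argument that the $8r_{k-1}$ budget always suffices. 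Some of these might be repaired by replacing $8$ with a larger absolute constant, but as written the argument does not establish the stated bound, whereas the paper's choose-the-hyperplane-first strategy closes all of these cases cleanly.
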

\begin{proof}[Proof of Lemma~\ref{l:fromGktoGk-1}]
Fix $k\geq 1$ and a nearest neighbor path $\pi = (x_0,\ldots,x_m)$ in $\GG_k$ of $k$-good vertices. 
For each $0\leq i\leq m$, since $x_i$ is $k$-good (see Definition~\ref{def:good}, \eqref{def:Auxk}, and \eqref{def:Buxk}),
there exist $a_i,b_i\in\Lambda_{x_i,k}$ such that 
all the vertices contained in 
\begin{equation}\label{def:Lambda*}
\Lambda^*_i = \Lambda_{x_i,k} \setminus \left((a_i +[0,r_{k-1} L_{k-1})^d)\cup(b_i +[0,r_{k-1} L_{k-1})^d)\right)
\end{equation}
are $(k-1)$-good. 
Since $d \geq 2$ and $l_{k-1} > r_{k-1}$ are integers, each of the sets $\Lambda^*_i$, $0\leq i\leq m$ is non-empty. 
In particular, this settles the case $m=0$, and from now on we may and will assume that $m\geq 1$. 

\medskip

Our strategy to construct the path $\pi'$ is as follows. 
We show that for any $0\leq i\leq m-1$, there exists a pair of vertices $y^*_i\in\Lambda^*_i$ and $z^*_{i+1}\in\Lambda^*_{i+1}$ 
such that 
\begin{itemize}\itemsep0pt
\item[(a)]
for $0\leq i\leq m-1$, $y^*_i$ is connected to $z^*_{i+1}$ by a nearest neighbor path in $\Lambda^*_i\cup\Lambda^*_{i+1}$ of length $l_{k-1}$,
\item[(b)]
for $1\leq i\leq m-1$, $z^*_i$ is connected to $y^*_i$ by a nearest neighbor path in $\Lambda^*_i$ of length at most $8r_{k-1}$. 
\end{itemize}
We then define $\pi'$ as the nearest neighbor path from $y^*_0\in\Lambda^*_1$ to $z^*_m\in\Lambda^*_m$ obtained by concatenating 
all the paths constructed in (a) and (b). 

\medskip

Let $0\leq i\leq m-1$. Let $\alpha_i$ be the smallest number in $\{1,\ldots,d\}$ such that $e_{\alpha_i}$ is orthogonal to $(x_{i+1} - x_i)$. 
By \eqref{def:Lambda*}, since $l_{k-1}>4r_{k-1}$, there exists an integer $0\leq j_i\leq 4r_{k-1}$ such that the hyperplane $H^*_i$ orthogonal to $e_{\alpha_i}$ 
and passing through $x_i + L_{k-1}j_ie_{\alpha_i}\in\Lambda_{x_i,k}$ (and $x_{i+1}+L_{k-1}j_ie_{\alpha_i}\in\Lambda_{x_{i+1},k}$) satisfies 
\[
H^*_i\cap(\Lambda_{x_i,k}\cup\Lambda_{x_{i+1},k}) \subset (\Lambda^*_i \cup\Lambda^*_{i+1}) .\
\]
We define 
\[
y^*_i := x_i + L_{k-1}j_ie_{\alpha_i}
\qquad
\mbox{and}
\qquad
z^*_{i+1} := x_{i+1}+L_{k-1}j_ie_{\alpha_i} .\
\]
Note that the line segment in $\GG_{k-1}$ from $y^*_i$ to $z^*_{i+1}$ has length $l_{k-1}$ and is contained in $\Lambda^*_i\cup\Lambda^*_{i+1}$. 
Therefore, it remains to show that $y^*_i$ and $z^*_i$ satisfy the requirement (b) above.

\medskip

Let $1\leq i\leq m-1$. We consider separately the cases $\alpha_i \neq \alpha_{i-1}$ and $\alpha_i = \alpha_{i-1}$ 
(see Figures \ref{fig:alphaiineq} and \ref{fig:alphaiequality}, respectively).

\begin{figure}[t!]
\begin{center}
\psfragscanon
\includegraphics[height=8cm]{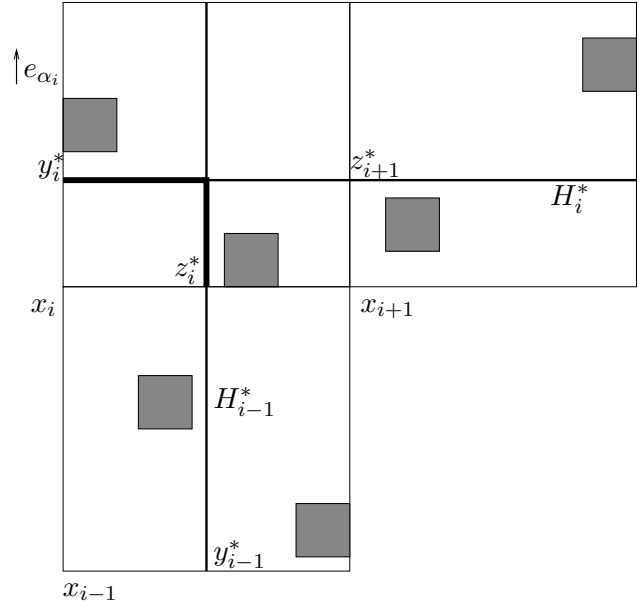}
\caption{The case $d=2$ and $\alpha_i \neq \alpha_{i-1}$.}\label{fig:alphaiineq}
\end{center}
\end{figure}

If $\alpha_i\neq\alpha_{i-1}$, then 
\[
y^*_i + L_{k-1}j_{i-1}e_{\alpha_{i-1}} = z^*_i + L_{k-1} j_i e_{\alpha_i} \in H^*_i\cap H^*_{i-1} .\
\]
Note that the line segment from $y^*_i$ to $y^*_i + L_{k-1}j_{i-1}e_{\alpha_{i-1}}$ in $\GG_{k-1}$ 
is contained in $\Lambda_{x_i,k}\cap H^*_i \subset \Lambda^*_i$, and 
the line segment from $z^*_i$ to $z^*_i + L_{k-1} j_i e_{\alpha_i}$ in $\GG_{k-1}$
is contained in $\Lambda_{x_i,k}\cap H^*_{i-1} \subset \Lambda^*_i$. 
We conclude that $y^*_i$ and $z^*_i$ are connected in $\Lambda^*_i$ by a nearest neighbor path in $\GG_{k-1}$ of length $j_{i-1} + j_i \leq 8r_{k-1}$. 

\begin{figure}[t!]
\begin{center}
\psfragscanon
\includegraphics[height=5cm]{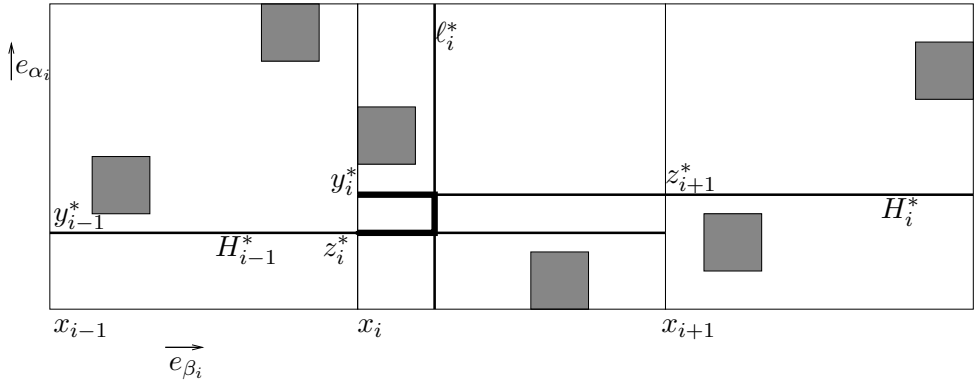}
\caption{The case $d=2$ and $\alpha_i = \alpha_{i-1}$.}\label{fig:alphaiequality}
\end{center}
\end{figure}

If $\alpha_i = \alpha_{i-1}$, let $\beta_i\in\{1,\ldots,d\}$ be such that the unit vector $e_{\beta_i}$ is parallel to $(x_i-x_{i-1})$. 
By the definitions of $\alpha_i$ and $\beta_i$, $\alpha_i\neq\beta_i$.   
By \eqref{def:Lambda*}, since $l_{k-1}>2r_{k-1}$, there exists an integer $0\leq k_i\leq 2r_{k-1}$ such that 
the line $\ell^*_i$ parallel to $e_{\alpha_i}$ and passing through $y^*_i + L_{k-1} k_i e_{\beta_i}$ (and $z^*_i + L_{k-1} k_i e_{\beta_i}$) satisfies
\[
\ell^*_i \cap \Lambda_{x_i,k} \subset \Lambda^*_i .\
\]
Note that the line segment from $y^*_i$ to $y^*_i + L_{k-1}k_ie_{\beta_i}$ in $\GG_{k-1}$ 
is contained in $\Lambda_{x_i,k}\cap H^*_i \subset \Lambda^*_i$,  
the line segment from $z^*_i$ to $z^*_i + L_{k-1}k_ie_{\beta_i}$ in $\GG_{k-1}$
is contained in $\Lambda_{x_i,k}\cap H^*_{i-1} \subset \Lambda^*_i$, and 
the line segment from $y^*_i + L_{k-1}k_ie_{\beta_i}$ to $z^*_i + L_{k-1}k_ie_{\beta_i}$ in $\GG_{k-1}$
is contained in $\Lambda_{x_i,k}\cap \ell^*_i \subset \Lambda^*_i$. 
We conclude that $y^*_i$ and $z^*_i$ are connected in $\Lambda^*_i$ by a nearest neighbor path in $\GG_{k-1}$ 
of length $2k_i + |j_{i-1} - j_i| \leq 8r_{k-1}$.

We have shown the existence of vertices $y^*_i$ and $z^*_i$ satisfying (a) and (b) above. 
The proof of Lemma~\ref{l:fromGktoGk-1} is complete.
\end{proof}

\bigskip

\section{Proof of Theorem~\ref{thm:Sinfty:chemdist}}
\label{section:proof_of_chemdist_thm}

In our proof of Theorem~\ref{thm:Sinfty:chemdist} we will follow the strategy sketched at the beginning of Section~\ref{section:conn_patterns}. 
We will define an event $\mathcal H$ using the definition of $k$-good vertices from Section~\ref{section:good_and_bad}. 
Using the seed estimates of Section \ref{section:conn_patterns}, 
we will show that $\mathcal H$ occurs with high $\mathbb P^u$-probability. 
We will then apply the geometric results of Section \ref{section:good_and_bad} to 
show that $\mathcal H$ implies the event on the left-hand side of \eqref{eq:Sinfty:chemdist}. 
This part of the proof is purely deterministic. 

\medskip

Let $u\in(a,b)$. 
It suffices to show that for some $c=c(u)>0$ and $C = C(u)<\infty$, the inequality
 \eqref{eq:Sinfty:chemdist} holds for all $R\geq C'$ with large enough $C' = C'(u)$.  
Increasing $C$ possibly even further then yields the validity of \eqref{eq:Sinfty:chemdist} for all $R \ge 1$.

We take $l_0>4r_0$ and $L_0$ so that the implications of Lemmas \ref{l:Auxk} and \ref{l:Buxk} hold 
(with $l_k$ and $r_k$ as in \eqref{def:scales} and $L_k$ as in \eqref{def:Lk}). 
Let $s$ be the largest integer such that $L_s \leq R^{1/d}$, i.e.,
\begin{equation}\label{def:s}
s=\max \{ \, s' \; : \;   L_{s'} \leq R^{1/d} \} .\
\end{equation}
We assume that $R\geq L_0^d$, so that $s$ is well-defined. 
Let $\mathcal H$ be the event that 
\begin{itemize}\itemsep0pt
\item[(a)]
each $z\in\GG_s\cap\ballZ(0,2R)$ is $s$-good, and
\item[(b)]
for any $z\in\GG_s\cap\ballZ(0,2R)$ and $x,y\in\set_{L_s}\cap(z+[0,2L_s)^d)$, 
$x$ is connected to $y$ by a path in $\set$ of length at most $(4 L_s)^d$. 
\end{itemize}
The event in part (b) of the definition of $\mathcal H$ is implied by the event that 
\begin{itemize}\itemsep0pt
\item[(b')]
for any $z\in\GG_s\cap\ballZ(0,2R)$ and $x,y\in\set_{L_s}\cap(z+[0,2L_s)^d)$, 
$x$ is connected to $y$ by a simple path in $\set\cap(z+[-L_s,3L_s)^d)$. 
\end{itemize}
By Definition~\ref{def:good}, \eqref{eq:C2:epsilon}, and Lemmas \ref{l:Auxk} and \ref{l:Buxk}, 
for any $u\in(a,b)$ and some constants $c = c(u)>0$ and $C = C(u)<\infty$, 
\[
\mathbb P^u\left[\mathcal H^c \right]\leq 
|\GG_s\cap\ballZ(0,2R)|\cdot 2\cdot 2^{-2^s} + |\GG_s\cap\ballZ(0,2R)|\cdot C\cdot e^{-c\funcS(u,L_s)} ~.\
\]
By \eqref{def:Lk}, \eqref{def:scales}, and \eqref{def:s}, for all $R\geq L_0^d$, 
\[
R^{1/d} \leq L_{s+1} = l_s\cdot L_s \leq l_0\cdot 4 \cdot (L_s)^{1+2^\scexp},
\]
which implies that 
\begin{equation}\label{eq:Ls:lowerbound}
L_s \geq \frac{1}{4l_0} R^{1/d(1+2^\scexp)} .\
\end{equation}
Using \eqref{eq:funcS}, \eqref{def:Lk}, \eqref{def:scales}, and \eqref{eq:Ls:lowerbound}, we deduce that there exist $c' = c'(u)>0$ and $C' = C'(u)<\infty$ such that for all $R\geq C'(u)$, 
\begin{equation}\label{eq:s:bounds}
2^s \geq (\binlog R)^{1+\constS} \quad\mbox{and}\quad \funcS(u,L_s) \geq c'(\binlog R)^{1+\constS} ~.\
\end{equation}
Note that we also have $|\GG_s\cap\ballZ(0,2R)| \leq (4R+1)^d$.
Therefore, for any $u\in(a,b)$, there exist $c=c(u)>0$ and $C = C(u)<\infty$ such that for all $R\geq C(u)$,  
\begin{equation}\label{eq:eventH:proba}
\mathbb P^u\left[\mathcal H \right]\geq 1 - Ce^{-c(\binlog R)^{1+\constS}} .\
\end{equation}

\medskip

We will now show that the occurrence of $\mathcal H$ implies the event on the left-hand side of \eqref{eq:Sinfty:chemdist}. 
Then \eqref{eq:Sinfty:chemdist} will immediately follow from \eqref{eq:eventH:proba}. 

Assume that $\mathcal H$ occurs. 
Take $x,y\in\set_R\cap\ballZ(0,R)$. 
We will show that $\distS(x,y)\leq CR$ for some $C = C(u)<\infty$. 

If there exists $z\in\GG_s\cap\ballZ(0,2R)$ such that $x,y\in (z+[0,2L_s)^d)$, 
then the result follows from part (b) of the definition of $\mathcal H$ and \eqref{def:s}. 

Therefore, we assume that there is no such $z$. 
There exist unique vertices $x',y'\in\GG_s\cap\ballZ(0,2R)$ such that 
\[
x\in \set_R\cap(x' + [0,L_s)^d),\qquad y\in \set_R\cap(y' + [0,L_s)^d) .\ 
\]
Moreover, $|x'-y'|_\infty\geq 2L_s$. 
By part (a) of the definition of $\mathcal H$, there exists a nearest neighbor path
of $s$-good vertices in $\GG_s\cap\ballZ(0,2R)$ connecting $x'$ and $y'$ of length  
\[
N' \leq d \lceil (4R+1)/L_s\rceil .\
\] 

Applying Lemma~\ref{l:fromGktoGk-1} repeatedly for $k=s, s-1, \ldots, 1,$
we deduce that there exists $0$-good vertices $x''\in \GG_0\cap(x' + [0,L_s)^d)$ and $y''\in \GG_0\cap(y' + [0,L_s)^d)$ 
which are connected by a nearest neighbor path of $0$-good vertices in $\GG_0$ of length 
\begin{equation}\label{def_eq:N_double_prime}
N'' \leq \prod_{k=1}^s\left(1 + 8\cdot \frac{r_{k-1}}{l_{k-1}}\right)\cdot \frac{L_s}{L_0}\cdot N' .\
\end{equation}
By Lemma~\ref{l:fromG0toZd}, there exist 
\[
x^*\in\set\cap(x'' + [0,L_0)^d),\qquad
y^*\in\set\cap(y'' + [0,L_0)^d) 
\]
such that 
$x^*$ is connected to $y^*$ by a path in $\set$ of length 
\[
N^* \leq (2L_0)^d\cdot N''.
\]
Note that 
\[
x^*\in\set\cap(x' + [0,L_s)^d),\qquad
y^*\in\set\cap(y' + [0,L_s)^d) .\
\]
Since $|x'-y'|_\infty \geq 2L_s$, we have $|x^* - y^*|_\infty > L_s$. 
In particular, 
\[
x^*,y^*\in\set_{L_s} .\
\]
Since  $x,y\in\set_R\cap\ballZ(0,R) \subseteq \set_{L_s}\cap\ballZ(0,R)$, 
$x,x^*\in(x' + [0,L_s)^d)$, and $y,y^*\in(y' + [0,L_s)^d)$, 
we obtain by part (b) of the definition of $\mathcal H$ that 
$\distS(x,x^*)\leq (4 L_s)^d$ and $\distS(y,y^*)\leq (4 L_s)^d$.
As a result, by the triangle inequality, we have 
\[
\distS(x,y) \leq N^* + 2 (4 L_s)^d .\
\]
It remains to observe that there exists $C = C(u)<\infty$ such that $N^*\leq \frac12 C R$ and
 $2 (4 L_s)^d \leq \frac12 C R$.
Indeed, $N^*\leq \frac12 C R$ follows from \eqref{def:scales} and the definitions of $N'$, $N''$ and $N^*$, 
and $2 (4 L_s)^d \leq \frac12 C R$ follows from \eqref{def:s}. 

We have thus shown that if the event $\mathcal H$ occurs, then any $x,y\in\set_S\cap\ballZ(0,R)$ are connected in $\set$ and  
$\distS(x,y)\leq C R$. Theorem~\ref{thm:Sinfty:chemdist} now follows from \eqref{eq:eventH:proba}. 
\qed

\section{Proof of Theorem~\ref{thm:badseed:proba}}\label{sec:badseed:proof}

Assume that the family of probability measures $\mathbb P^u$, $u\in(a,b)$, satisfies \pp{} and \ppp{}.
The proof of Theorem~\ref{thm:badseed:proba} goes by induction on $k$. The key idea is that for a certain sequence $(u_k)_{k=1}^{\infty}$ of parameters the expression
 $\sup_{x \in \GG_{k+1}} \P^{u_{k+1}}\left[\badseed_{x,k+1}\right]$ can be upper bounded by something which is not much bigger than
$\sup_{x \in \GG_{k}} \P^{u_k}\left[\badseed_{x,k}\right]^2$, see \eqref{eq:DecIneqApp}.
Throughout this section we fix the parameters $\constP$ and $\epsP$ appearing in \eqref{eq:uwidehatu} and \eqref{def:epsP}, respectively. 

\medskip

We only give the proof in the case of increasing events $\badseed_{x}$. 
The case when $\badseed_{x}$ are decreasing can be treated similarly, and we omit it. 

\medskip

Fix $u\in(a,b)$. Let $\delta\in(0,1)$ be such that $u' = (1+\delta)u\in (a,b)$. 
Let the sequences $l_k$, $r_k$, and $L_k$ be defined as in \eqref{def:scales} and \eqref{def:Lk}. 
Take $r_0\geq \RP$ (the constant from the assumption \ppp{}) 
large enough so that 
\begin{equation}\label{eq:condr0}
\prod_{k=0}^\infty\left(1 + r_k^{-\constP}\right) \leq 1+\delta .\
\end{equation}
Let $\badseed_{x}$, $x\in\Z^d$, be increasing events such that $\badseed_{x}\in\sigma(\Psi_y,~y\in x + [-L_0,3L_0)^d)$. 
By \eqref{def:cascading}, for all $k\geq 0$ and $x\in\GG_k$,
\begin{equation}\label{eq:badseed:meas}
\text{ $\badseed_{x,k}$ is increasing, and 
$\badseed_{x,k}\in \sigma(\Psi_y,~y\in x + [-L_0,L_k + 2L_0)^d)$.}
\end{equation} 
Let 
\begin{equation}\label{eq:uk}
u_0 = u'\quad\mbox{and}\quad u_k = \left(1 + r_k^{-\constP}\right)\cdot u_{k+1},\quad k\geq 0 .\
\end{equation}
By \eqref{eq:condr0}, we have 
\begin{equation}\label{eq:uinftyu}
a< u \leq u_k< b,\quad\mbox{for all }~ k\geq 0 .\
\end{equation}

For $k\geq 0$, let 
\begin{equation}\label{eq:kappa}
\kappa_k = 1 + l_0\cdot\sum_{i=k+1}^\infty i^{-2} \quad \left(= \kappa_{k+1} + l_0\cdot(k+1)^{-2}\right).\
\end{equation}
By monotonicity of the events $\badseed_{x}$, \eqref{eq:uinftyu}, condition \pp{}, 
and the fact that $\kappa_k\geq 1$, for all $k$, in order to prove \eqref{eq:decoupling:result}, 
it suffices to show that 
\begin{equation}\label{eq:indStatement}
\sup_{x\in\GG_k} \mathbb P^{u_k}\left[\badseed_{x,k}\right] \leq 2^{-\kappa_k2^k}.
\end{equation}
We will prove \eqref{eq:indStatement} by induction on $k$. 

\medskip

{\em Induction start:} 
By \eqref{eq:decoupling:condition} and \eqref{eq:kappa}, for each $l_0$, there exist $L_0$ such that
\begin{equation} \label{eq:seedEst}
\sup_{x\in\GG_0} \mathbb P^{u_0}\left[\badseed_{x,0}\right] \leq 2^{-\kappa_0},
\end{equation}
 hence \eqref{eq:indStatement} holds for $k=0$ and such $L_0 = L_0(l_0,u_0,\badseed)$. 
(Later $l_0$ will be chosen large enough, independently of $L_0$.) 

\medskip

{\em Induction step:} We assume that \eqref{eq:indStatement} holds for $k\geq 0$, 
and now prove that it also holds for $k+1$. 
For each $x\in\GG_k$, we have 
\begin{align}
\P^{u_{k+1}}\left[\badseed_{x,k+1}\right]
&\stackrel{\eqref{def:cascading}}\le 
\sum_{x_1, x_2 \in \Lambda_{x,k+1} \; : \; \vert x_1 - x_2 \vert_\infty >  r_{k}\cdot L_{k}}
\P^{u_{k+1}} \left[ \badseed_{x_1,k}\cap \badseed_{x_2,k}\right] \nonumber\\
&\stackrel{\eqref{eq:decorrelation:increasing},\eqref{eq:badseed:meas},\eqref{eq:uk}}\le \vert \Lambda_{x,k+1} \vert^2
\left( \sup_{x\in\GG_k} \P^{u_k} \left[\badseed_{x,k}\right]^2 + e^{- \funcP(L_k)} \right)
 \label{eq:DecIneqApp}\\
&\stackrel{\eqref{def:Lambdaxk}}\le l_k^{2d} 
\left(2^{- \kappa_k 2^{k+1}} + e^{- \funcP(L_k)} \right) .\
\label{eq:indRHS}
\end{align}
Here, in \eqref{eq:DecIneqApp} we applied \eqref{eq:decorrelation:increasing} from the condition \ppp{} with $R = r_k$, $L = L_k$, $u = u_k$, 
and $\widehat u = u_{k+1}$ (the requirements of condition \ppp{} are satisfied 
by the choice of $r_0\geq \RP$, \eqref{eq:badseed:meas}, \eqref{eq:uk}, and \eqref{eq:uinftyu}), 
and \eqref{eq:indRHS} follows from the induction assumption. 

By the definitions of $l_k$ in \eqref{def:scales} and $\kappa_k$ in \eqref{eq:kappa}, 
there exists $C<\infty$ such that for all $k\geq 0$ and $l_0\geq C$,
\begin{equation}\label{eq:condl0L0:a}
\binlog(l_k^{2d}) - \kappa_k 2^{k+1} \leq - \kappa_{k+1} 2^{k+1} - 1 ,\
\end{equation}
and by the definitions of $\scexp$, $l_k$  in \eqref{def:scales}, $L_k$ in \eqref{def:Lk}, 
$\kappa_k$ in \eqref{eq:kappa}, $\epsP$ in \eqref{def:epsP}, and the fact that $\scexp\cdot\epsP \geq 1$,  
there exists $C' = C'(l_0)<\infty$ such that 
for all $k\geq 0$,  and $L_0 \geq C'$, 
\begin{equation}\label{eq:condl0L0:b}
\binlog(l_k^{2d}) - \funcP(L_k) \leq 
- \kappa_{k+1} 2^{k+1} - 1 .\
\end{equation}
Plugging \eqref{eq:condl0L0:a} and \eqref{eq:condl0L0:b} into \eqref{eq:indRHS}, 
we obtain that \eqref{eq:indStatement} holds for $k+1$. 
We finish the proof by specifying the order in which we choose large constants $r_0$, $l_0$ and $L_0$. 
We first choose $r_0\geq \RP$ (the constant from the condition \ppp{}) so that \eqref{eq:condr0} holds. 
We then choose $l_0$ so that \eqref{eq:condl0L0:a} holds.
In other words, there exists a finite constant $C = C(\delta) \geq \RP$ such that for all $l_0, r_0\geq C$, 
\eqref{eq:condr0} and \eqref{eq:condl0L0:a} hold. 
Finally, we choose $L_0$ to satisfy \eqref{eq:seedEst} and \eqref{eq:condl0L0:b}. 
In particular, if the limit in \eqref{eq:decoupling:condition} exists, then there exists 
$C' = C'(\delta,\badseed,l_0)<\infty$ such that for all $L_0\geq C'$, 
\eqref{eq:seedEst} and \eqref{eq:condl0L0:b} hold. 
We have thus finished the proof of Theorem~\ref{thm:badseed:proba} in the case of increasing events $\badseed_{x}$. 
\qed

\section{Proof of Theorem~\ref{thm:shapeThm}}\label{section:shape_proof}

We begin by briefly outlining our proof of Theorem \ref{thm:shapeThm}, which
 adapts the standard recipe for shape theorems (see, e.g.\ \cite{frog}) to our setting.

In Section \ref{subsection_extension_of_chemdist} we define the pseudometric $\distSZ(\cdot,\cdot)$ on $\Z^d$ satisfying 
$\distSZ(x,y) = \distS(x,y)$ for $x,y\in\set_\infty$. 
This trick will allow us to work with $\P^u$ rather than the more cumbersome 
$\P^u [ \cdot \, \vert \, 0 \in \set_\infty]$.
After stating some properties of $\distSZ(\cdot,\cdot)$, 
we rephrase Theorem \ref{thm:shapeThm} in 
terms of $\distSZ(\cdot,\cdot)$ as Proposition \ref{prop:shapeThm_ext}.

In Section \ref{subsection_proof_of_ext_shape_proposition} we prove Proposition \ref{prop:shapeThm_ext}.
We first use Kingman's subadditive ergodic theorem  \cite{kingman} to deduce in Lemma \ref{lemma_norm}
 that the large-scale behaviour of $\distSZ(0,\cdot)$ along half-lines can be $\mathbb P^u$-a.s. 
approximated by some norm $p_u(\cdot)$ on $\R^d$. 
The convex set $D_u$ will turn out to be the unit ball with respect to this norm. 
The proof of Proposition \ref{prop:shapeThm_ext} combines the result of Lemma \ref{lemma_norm} with
a covering argument in which Theorem \ref{thm:Sinfty:chemdist} serves as a preliminary bound.

\subsection{A suitable pseudometric on $\Z^d$}\label{subsection_extension_of_chemdist}

In this section we define the pseudometric $\distSZ(\cdot,\cdot)$ on $\Z^d$ satisfying 
$\distSZ(x,y) = \distS(x,y)$ for $x,y\in\set_\infty$. 
For that we assume that 
\begin{equation}\label{eq:set:nonempty}
\set_\infty\neq\emptyset .\
\end{equation}
We first
choose a bijection $\ell : \Z^d \to \N$ which satisfies
\[ \forall\, x,y \in \Z^d \; : \;  |x|_\infty < |y|_\infty \quad \implies \quad \ell(x)<\ell(y). \]
Thus $\ell$ gives us a labelling of $\Z^d$ such that vertices with smaller norm have smaller labels.
Given $x \in \Z^d$ and $\emptyset \neq V \subseteq \Z^d$ 
we  define $\Phi(x, V ) \in V$ to be one of the $l^{\infty}$-closest vertices of
 $V$ to $x$ by 
\[\Phi(x, V )=x^*, \qquad \text{where} \qquad  
 \ell(x^*-x)=\min \{ \, \ell(y-x) \; : \; y \in V \, \}.\]
Note that 
\begin{equation}\label{eq:Psi:xinV}
\Phi(x, V)=x\quad\text{for all} \quad x\in V,
\end{equation}
and
\begin{equation}\label{translation_inv_psi}
\Phi(x+y, V+y )=\Phi(x, V )+y \quad \text{for all} \quad x,y \in \Z^d.
\end{equation}
We define $\distSZ~:~\Z^d\times\Z^d \to \Z_+\cup\{\infty\}$ by the formula 
\begin{equation}\label{def:distSZ}
\distSZ(x,y)=
\distS \big(\Phi(x, \set_\infty  ), \, \Phi(y,\set_\infty )\big), \qquad
x,y \in \Z^d.
\end{equation}
By \eqref{eq:set:nonempty}, the function $\distSZ$ is well-defined. 
In fact, $\distSZ(\cdot,\cdot)$ is a pseudometric on $\Z^d$. 
By \eqref{eq:Psi:xinV}, 
\begin{equation}\label{chemdist_restriction}
\forall\,  x,y \in \set_\infty\; : \; \distSZ(x,y)=\distS(x,y) \geq |x-y|_1.
\end{equation}
Also note that $\distSZ(x,y)<\infty$ for all $x,y\in\Z^d$ if and only if $\set_\infty$ is connected. 

The following lemma provides a preliminary bound on the shape of balls in $\Z^d$ with respect to $\distSZ$. 
For $x\in\Z^d$ and $r\geq 0$, let $\ballSZ(x,r)$ be the ball in $\Z^d$ with center at $x$ and radius $\lfloor r\rfloor$ with respect to $\distSZ$, i.e., 
\begin{equation}\label{def:ballSZ}
\ballSZ(x,r) = \left\{y\in\Z^d~:~\distSZ(x,y)\leq r\right\} .\
\end{equation}
\begin{lemma}\label{l:chemdistball:crudebound}
Assume that the family of probability measures $\mathbb P^u$, $u\in(a,b)$, satisfies \p{} -- \ppp{} and \s{} -- \sss{}. 
For any $u\in(a,b)$, there exists $\chemconstt=\chemconstt(u)<\infty$ such that 
for all $x\in\Z^d$ and $R\geq 1$,  
\begin{equation}\label{eq:chemdistball:crudebound}
\mathbb P^u\left[\ballZ(x,R)\subseteq \ballSZ(x,\chemconstt R)\right]\geq
 1 - \chemconstt e^{-(\binlog R)^{1+\constS}/\chemconstt} ~,\
\end{equation}
where $\constS = \constS(u)>0$ is defined in \eqref{eq:funcS}.
\end{lemma}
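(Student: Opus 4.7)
The plan is to combine Theorem~\ref{thm:Sinfty:chemdist} with a covering argument that controls the $\ell^\infty$-distance from each $y\in\ballZ(x,R)$ to its projection $\Phi(y,\set_\infty)$. By condition \p{} together with the translation equivariance \eqref{translation_inv_psi}, the joint law of $(\distSZ(x,x+z))_{z\in\Z^d}$ does not depend on $x$, so I may assume $x=0$ throughout.

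Fix $r=\lceil R^{1/2}\rceil$ and pick a set $G\subseteq\ballZ(0,R+r)$ of cardinality $|G|\le C(R/r)^d$ such that $\ballZ(0,R)\subseteq\bigcup_{z\in G}\ballZ(z,r)$. I intersect the two favorable events
\[
E_1=\bigcap_{z\in G}\{\set_\infty\cap\ballZ(z,r)\neq\emptyset\},\qquad
E_2=\bigl\{\forall\,x',y'\in\set_{2R}\cap\ballZ(0,2R):\distS(x',y')\le 2C_0R\bigr\},
\]
where $C_0=C_0(u)$ is the constant furnished by \eqref{eq:Sinfty:chemdist} applied at scale $2R$. By translation invariance, \eqref{eq:C1:infty}, a union bound, and condition \s{} (which gives $\funcS(u,r)\ge(\log r)^{1+\constS}\ge 2^{-(1+\constS)}(\log R)^{1+\constS}$ for large $R$), one has $\P^u[E_1^c]\le C(R/r)^d e^{-c\funcS(u,r)}$; and Theorem~\ref{thm:Sinfty:chemdist} yields $\P^u[E_2^c]\le Ce^{-c(\log R)^{1+\constS}}$. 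Absorbing the polynomial prefactor $(R/r)^d$ into the stretched-exponential tail produces $\P^u[E_1\cap E_2]\ge 1-\chemconstt e^{-(\log R)^{1+\constS}/\chemconstt}$ for a suitable $\chemconstt=\chemconstt(u)$.

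On $E_1\cap E_2$ the remaining argument is deterministic. For any $y\in\ballZ(0,R)$ pick $z\in G$ with $|y-z|_\infty\le r$; then $E_1$ supplies some $w\in\set_\infty\cap\ballZ(z,r)\subseteq\ballZ(y,2r)$, and since the labelling $\ell$ defining $\Phi$ respects the $\ell^\infty$-norm order, $\Phi(y,\set_\infty)$ is an $\ell^\infty$-closest point of $\set_\infty$ to $y$, whence $|\Phi(y,\set_\infty)-y|_\infty\le|w-y|_\infty\le 2r$. Consequently $\Phi(y,\set_\infty)\in\set_\infty\cap\ballZ(0,R+2r)\subseteq\set_{2R}\cap\ballZ(0,2R)$ for large $R$, and the same applies with $y$ replaced by $0$. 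The event $E_2$ then forces $\distSZ(0,y)=\distS(\Phi(0,\set_\infty),\Phi(y,\set_\infty))\le 2C_0R$ by \eqref{def:distSZ}, which is the claimed inclusion for $\chemconstt\ge 2C_0$; further enlargement of $\chemconstt$ takes care of the finitely many small $R$. The only real balancing act is the choice of $r$: it must be small enough that $(\log r)^{1+\constS}$ dominates $d\log(R/r)$ and large enough that $\ballZ(y,2r)\subseteq\ballZ(0,2R)$. Any polynomial scale $r=R^\alpha$ with $\alpha\in(0,1)$ works, and $\alpha=1/2$ is a clean choice that makes both stretched-exponential error terms of the same order.
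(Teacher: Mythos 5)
Your proposal is correct and follows essentially the same route as the paper: control the displacement $|\Phi(y,\set_\infty)-y|_\infty$ for $y\in\ballZ(0,R)$ using \eqref{eq:C1:infty}/\eqref{psi_y_close_to_y}, then invoke Theorem~\ref{thm:Sinfty:chemdist} on a comparable scale, reduce to $x=0$ by \p{} and \eqref{translation_inv_psi}, and absorb polynomial prefactors into the stretched-exponential tail. The covering at intermediate scale $r=R^{1/2}$ is however an unnecessary complication: a single point of $\set_\infty$ in $\ballZ(0,R)$ (which \eqref{eq:C1:infty} supplies directly with error $C_1e^{-c_1\funcS(u,R)}$) already forces $\Phi(y,\set_\infty)\in\ballZ(y,2R)\subseteq\ballZ(0,3R)$ for every $y\in\ballZ(0,R)$, since all such $y$ lie within $\ell^\infty$-distance $2R$ of that point; so there is no balancing act to perform and no $(R/r)^d$ prefactor to absorb. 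This is a minor inefficiency, not a gap.
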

\begin{proof}[Proof of Lemma~\ref{l:chemdistball:crudebound}]
By \p{}, \eqref{translation_inv_psi}, and \eqref{def:distSZ}, it suffices to prove the lemma for $x = 0\in\Z^d$. 
Fix $u\in(a,b)$. 
By \eqref{eq:C1:infty}, there exist $c = c(u)>0$ and $C = C(u)<\infty$ such that for all $y\in\Z^d$ and $R\geq 1$, 
\begin{equation}\label{psi_y_close_to_y}
\P^u \left[ \, |\Phi(y, \set_\infty )-y|_\infty > R \, \right] \leq C e^{-c\funcS(u,R)}.
\end{equation}
Together with Theorem~\ref{thm:Sinfty:chemdist} and \eqref{eq:funcS}, \eqref{psi_y_close_to_y} implies that 
there exists $\chemconstt = \chemconstt(u)<\infty$ such that for all $R\geq 1$, 
\begin{equation}\label{useful_for_referees_expect_remark}
\mathbb P^u\left[\exists \; z\in \ballZ(0,R)~:~\distSZ(0,z) > \chemconstt R\right] \leq \chemconstt e^{-(\binlog R)^{1+\constS}/\chemconstt}~ .\
\end{equation}
This implies \eqref{eq:chemdistball:crudebound} for $x=0$,  hence the proof of Lemma~\ref{l:chemdistball:crudebound} is complete. 
\end{proof}

Now we state a variant of Theorem \ref{thm:shapeThm} in which $\distS$ and 
$\P^u [ \cdot \, \vert \, 0 \in \set_\infty]$ are replaced by $\distSZ$ and $\P^u$, respectively. 

\begin{proposition}  \label{prop:shapeThm_ext}
Assume that the family of probability measures $\mathbb P^u$, $u\in(a,b)$, satisfies \p{} -- \ppp{} and \s{} -- \sss{}. 
For any $u\in(a,b)$, there exists a convex compact subset $D_u \subset \R^d$ 
such that for all $0<\varepsilon<1$ there exists a 
$\P^u$-a.s.\ finite 
random variable $\tilde{\shaperad}_{\varepsilon,u}$ satisfying
\begin{align} \label{eq:shapeThm_ext_1}
 \forall \,
\shaperad \geq \tilde{\shaperad}_{\varepsilon,u} \; : \; \Z^d \cap (1-\varepsilon)\shaperad \cdot D_u 
&\subseteq \ballSZ (0,\shaperad), \\
\label{eq:shapeThm_ext_2}
\forall \,
\shaperad \geq \tilde{\shaperad}_{\varepsilon,u} \; : \; 
\Z^d  \cap (1+\varepsilon)\shaperad \cdot D_u 
&\supseteq \ballSZ (0,\shaperad).
\end{align}
\end{proposition}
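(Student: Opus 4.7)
The plan is the standard recipe for shape theorems, adapted to the pseudometric $\distSZ$. First I would verify the hypotheses of Kingman's subadditive ergodic theorem for the array $X_{m,n} = \distSZ(mx, nx)$ indexed by $0 \leq m < n$, for each fixed $x \in \Z^d$. Subadditivity $X_{0,n} \leq X_{0,m} + X_{m,n}$ is the triangle inequality for $\distSZ$. Stationarity and ergodicity of the shift along $x$ are inherited from \p{} via \eqref{translation_inv_psi} and \eqref{def:distSZ}. Integrability of $\distSZ(0,x)$ follows from Lemma~\ref{l:chemdistball:crudebound}: the stretched-logarithmic tail in \eqref{eq:chemdistball:crudebound} applied at scale $R = |x|$ yields $\E^u[\distSZ(0,x)] \leq C(u)\, |x|$. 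Kingman's theorem then provides a deterministic limit
\[
p_u(x) \; := \; \lim_{n \to \infty} \frac{\distSZ(0, nx)}{n}, \qquad x \in \Z^d,
\]
with convergence both $\P^u$-a.s.\ and in $L^1$.

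Next I would show that $p_u$ extends to a norm on $\R^d$. Positive homogeneity on $\Z^d$ is immediate from the definition; subadditivity $p_u(x+y) \leq p_u(x) + p_u(y)$ follows from stationarity combined with $\distSZ(0, n(x+y)) \leq \distSZ(0, nx) + \distSZ(nx, n(x+y))$. The upper bound $p_u(x) \leq \chemconstt\, |x|$ is inherited from Lemma~\ref{l:chemdistball:crudebound}, and permits extending $p_u$ to $\mathbb{Q}^d$ by homogeneity and to $\R^d$ by continuity. For strict positivity I would combine the lower bound $\distSZ(x,y) \geq |\Phi(x,\set_\infty) - \Phi(y,\set_\infty)|_1$ (from \eqref{chemdist_restriction}) with $|\Phi(nx,\set_\infty) - nx|_\infty / n \to 0$ $\P^u$-a.s., which follows from \eqref{psi_y_close_to_y} by a Borel--Cantelli argument using that $\funcS(u,R)/\log R \to \infty$; this yields $p_u(x) \geq |x|_1$. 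The set $D_u := \{x \in \R^d : p_u(x) \leq 1\}$ is then convex and compact and inherits any symmetries of $\P^u$.

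Finally I would upgrade convergence along fixed rays to the uniform inclusions \eqref{eq:shapeThm_ext_1}--\eqref{eq:shapeThm_ext_2}. Fix $\varepsilon \in (0,1)$. By continuity of $p_u$ and compactness of $\partial D_u$, choose a finite set $z_1, \dots, z_N \in \Z^d$ such that the vectors $z_i / p_u(z_i)$ form a $\delta$-net of $\partial D_u$, with $\delta = \delta(\varepsilon, \chemconstt)$ chosen so small that a spatial deviation of order $\delta R$ alters both $R\, p_u(\cdot/R)$ and $\distSZ(0, \cdot)$ by at most $\varepsilon R / 4$ under the estimates below. On the $\P^u$-a.s.\ event provided by Kingman for each $z_i$, one has $(1 - \varepsilon/3)\, n\, p_u(z_i) \leq \distSZ(0, n z_i) \leq (1 + \varepsilon/3)\, n\, p_u(z_i)$ for all $n$ beyond some $n_0(\varepsilon)$. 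For arbitrary $x \in \Z^d$ with $|x|$ of order $R$, pick $i$ and $n$ so that $|n z_i - x| \leq \delta R$; then estimate $\distSZ(0,x)$ from both sides via the triangle inequality against $\distSZ(0, n z_i)$ plus $\distSZ(n z_i, x)$, the latter being controlled by applying Lemma~\ref{l:chemdistball:crudebound} at the shifted center $n z_i$ and at scale $\delta R$. This yields $(1 - \varepsilon)\, p_u(x) \leq \distSZ(0,x) \leq (1 + \varepsilon)\, p_u(x)$ uniformly for $x$ ranging over $\ballZ(0, CR)$, from which both \eqref{eq:shapeThm_ext_1} and \eqref{eq:shapeThm_ext_2} follow.

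The main obstacle is the last step: turning ray-wise convergence along $N$ directions into a uniform statement over the $\mathrm{O}(R^d)$ points of $\ballZ(0, CR)$. The resolution is a union bound using the tail in Lemma~\ref{l:chemdistball:crudebound}, whose stretched-logarithmic decay $\chemconstt\, e^{-(\log R)^{1+\constS}/\chemconstt}$ beats any polynomial in $R$ and therefore survives a union bound over a $\delta R$-net of $\ballZ(0, CR)$. A Borel--Cantelli argument along a geometric subsequence of radii then produces the $\P^u$-a.s.\ finite $\tilde{\shaperad}_{\varepsilon,u}$ claimed in the statement, completing the proof.
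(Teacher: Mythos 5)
Your proposal is correct and follows essentially the same strategy as the paper: Kingman's subadditive ergodic theorem applied to $\distSZ(mx,nx)$ to build the norm $p_u$, definition of $D_u$ as its unit ball, and a covering argument combining ray-wise convergence along finitely many rational directions with the crude bound of Lemma~\ref{l:chemdistball:crudebound} and Borel--Cantelli. The only superficial difference is framing the covering as a uniform two-sided comparison $(1-\varepsilon)p_u(x)\leq\distSZ(0,x)\leq(1+\varepsilon)p_u(x)$ over $\ballZ(0,CR)$ rather than as two separate inclusions of scaled copies of $D_u$, but the ingredients and the handling of the key obstacle (going from rays to uniformity) are the same.
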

Before we prove Proposition \ref{prop:shapeThm_ext} in Section \ref{subsection_proof_of_ext_shape_proposition}, 
we  deduce Theorem \ref{thm:shapeThm} from it.

\begin{proof}[Proof of Theorem \ref{thm:shapeThm}] Let $u\in(a,b)$. We first observe that
\begin{eqnarray*}
\P^u\left[0 \in \set_\infty\right]
&\underset{\eqref{eq:shapeThm_ext_2}}{\stackrel{\eqref{eq:shapeThm_ext_1}}{=}}
&\P^u\left[0 \in \set_\infty, \;  
 \forall \,
\shaperad \geq \tilde{\shaperad}_{\varepsilon,u} \; : \; 
\begin{array}{c}
\set_\infty \cap (1-\varepsilon)\shaperad \cdot D_u \subseteq 
\set_\infty \cap \ballSZ (0,\shaperad)\\   
\set_\infty \cap (1+\varepsilon)\shaperad \cdot D_u \supseteq
\set_\infty \cap \ballSZ (0,\shaperad)
\end{array}
\right]\\
&\stackrel{\eqref{chemdist_restriction}}{=}
&\P^u\left[0 \in \set_\infty, \;  
 \forall \,
\shaperad \geq \tilde{\shaperad}_{\varepsilon,u} \; : \; 
\begin{array}{c}
\set_\infty \cap  (1-\varepsilon)\shaperad \cdot D_u \subseteq \ballS (0,\shaperad)\\
\set_\infty \cap  (1+\varepsilon)\shaperad \cdot D_u \supseteq \ballS (0,\shaperad)
\end{array}
 \right].
\end{eqnarray*}
Now the statement of Theorem \ref{thm:shapeThm} follows 
(with the same $D_u$ and  $\tilde{\shaperad}_{\varepsilon,u}$ as in Proposition \ref{prop:shapeThm_ext})
if we divide the above equality by 
$\P^u\left[0 \in \set_\infty\right]$ (which is positive by \sss{}).
\end{proof}

\subsection{Proof of Proposition \ref{prop:shapeThm_ext}}\label{subsection_proof_of_ext_shape_proposition}

\begin{lemma}\label{lemma_norm} 
Assume that the family of probability measures $\mathbb P^u$, $u\in(a,b)$, satisfies \p{} -- \ppp{} and \s{} -- \sss{}.
For any $u\in(a,b)$, there exists a unique norm $p_u(\cdot)$ on $\R^d$ 
such that for any $x \in \Z^d$,
\begin{equation}\label{subergodic_limit}
p_u(x)= \lim_{n \to \infty} \frac{1}{n} \distSZ(0,n  x)\qquad \mathbb P^u\text{-a.s.}
\end{equation}
\end{lemma}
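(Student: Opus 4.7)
The plan is to fix $x \in \Z^d$ and apply Kingman's subadditive ergodic theorem \cite{kingman} to the process $X_{m,n} := \distSZ(mx, nx)$, $0 \leq m \leq n$. Subadditivity $X_{0,n} \leq X_{0,m} + X_{m,n}$ is immediate from the pseudometric property of $\distSZ$ noted just below \eqref{def:distSZ}. Stationarity of the increments under the shift $\tau_x$ follows from \p{} together with the translation equivariance \eqref{translation_inv_psi}, which gives $X_{m+1,n+1} = X_{m,n}\circ \tau_x$ $\P^u$-a.s. Integrability $\E^u[\distSZ(0,x)] < \infty$ is obtained by integrating the tail bound underlying \eqref{useful_for_referees_expect_remark} from the proof of Lemma \ref{l:chemdistball:crudebound}, whose stretched-exponential decay in $\log R$ is more than summable. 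Kingman's theorem thus produces $p_u(x) := \lim_{n\to\infty} n^{-1} \distSZ(0,nx)$ $\P^u$-a.s.\ and in $L^1$, a priori as a $\tau_x$-invariant random variable.

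To upgrade to a deterministic limit, I will show $p_u(x)\circ\tau_y = p_u(x)$ $\P^u$-a.s.\ for every $y \in \Z^d$, so that full lattice-shift invariance combined with ergodicity from \p{} forces $p_u(x)$ to be a.s.\ constant. By the triangle inequality,
\[
|\distSZ(0,nx) - \distSZ(y, nx+y)| \leq \distSZ(0,y) + \distSZ(nx, nx+y),
\]
where the first term is a.s.\ finite and, by shift invariance, the second has the same distribution as $\distSZ(0,y)$. Lemma \ref{l:chemdistball:crudebound} gives $\P^u[\distSZ(nx, nx+y) > \varepsilon n] \leq \chemconstt\, e^{-c(\log n)^{1+\constS}}$ for large $n$ and some $c=c(u,y,\varepsilon)>0$, which is summable, so Borel--Cantelli yields $n^{-1}\distSZ(nx, nx+y)\to 0$ $\P^u$-a.s. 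Dividing the display by $n$ and letting $n\to\infty$ gives the desired equality of limits.

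Once $p_u$ is known to be deterministic on $\Z^d$, I would verify it extends to a norm on $\R^d$. Homogeneity $p_u(kx)=|k|p_u(x)$ for $k \in \Z$ is immediate from the definition for $k\geq 0$ and follows from the symmetry $\distSZ(0,-nx) \stackrel{d}{=} \distSZ(0,nx)$ together with the deterministic nature of the Kingman limits for $k<0$. Subadditivity is cleanest in expectation: from $\distSZ(0,n(x+y)) \leq \distSZ(0,nx) + \distSZ(nx, n(x+y))$, taking $\P^u$-expectations and using shift invariance yields $\E^u\distSZ(0,n(x+y)) \leq \E^u\distSZ(0,nx) + \E^u\distSZ(0,ny)$; dividing by $n$ and using $L^1$ convergence from Kingman gives $p_u(x+y) \leq p_u(x)+p_u(y)$. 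An upper bound $p_u(x)\leq \chemconstt |x|_\infty$ comes from integrating Lemma \ref{l:chemdistball:crudebound}. For positivity, \eqref{chemdist_restriction} yields
\[
\distSZ(0,nx) \geq |\Phi(0,\set_\infty) - \Phi(nx,\set_\infty)|_1 \geq |nx|_1 - |\Phi(0,\set_\infty)|_1 - |\Phi(nx,\set_\infty)-nx|_1,
\]
and a Borel--Cantelli argument using \eqref{psi_y_close_to_y} (at $R=\sqrt n$, say) shows $n^{-1}|\Phi(nx,\set_\infty)-nx|_1\to 0$ $\P^u$-a.s., whence $p_u(x) \geq |x|_1 > 0$ for $x\neq 0$.

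The extension from $\Z^d$ to $\R^d$ then proceeds via $\Q^d$-homogeneity and the uniform continuity of $p_u$ on $\Q^d$ supplied by the Lipschitz bound, with uniqueness of the extension following from the fact that \eqref{subergodic_limit} determines $p_u$ on $\Z^d$ and therefore on $\R^d$ by density. The main obstacle, in my view, is the a.s.\ (rather than merely distributional) upgrade of $\tau_y$-invariance of the Kingman limit for $y$ not an integer multiple of $x$; this is exactly where the quantitative stretched-exponential bound of Lemma \ref{l:chemdistball:crudebound} becomes essential, as a Borel--Cantelli argument with only polynomially decaying tails would not close.
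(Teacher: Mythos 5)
Your proof is correct and follows the same Kingman-based route as the paper, applied to the same process $x_{m,n}=\distSZ(mx,nx)$ with the same three ingredients (subadditivity from the pseudometric, stationarity from \p{} plus \eqref{translation_inv_psi}, integrability from the stretched-exponential tail). Where you genuinely add something is the step after Kingman. The paper asserts that ``the above shift is ergodic by \p{}'' and thus reads off a deterministic limit via \eqref{ergodic_limit_inf}; but \p{} only gives ergodicity of the full $\Z^d$-action, which does \emph{not} in general imply ergodicity of the single shift $\tau_x$. A priori Kingman's theorem therefore yields only a $\tau_x$-invariant random variable. Your fix --- deriving $p_u(x)\circ\tau_y=p_u(x)$ a.s.\ for every $y\in\Z^d$ via the triangle inequality and a Borel--Cantelli argument with Lemma~\ref{l:chemdistball:crudebound} applied at scale $R\asymp\varepsilon n$, and then invoking genuine $\Z^d$-ergodicity --- is the right way to close this, and it is where the quantitative tail bound earns its keep, as you observe. (For the specific models in Sections~\ref{sec:bp}--\ref{sec:gff} the measures are mixing, so each $\tau_x$ is in fact ergodic and the gap is harmless there, but in the abstract framework of \p{} your argument is the more careful one.) The remaining steps --- homogeneity from subsequences, symmetry from $\distSZ(0,-nx)\stackrel{d}{=}\distSZ(0,nx)$ plus determinacy of the limit, subadditivity via $L^1$-convergence, and the two-sided norm bounds via \eqref{chemdist_restriction} and \eqref{psi_y_close_to_y} --- match the paper's \eqref{norm_properties_opposite_subadd}--\eqref{norm_properties_bounded_above_below} up to cosmetic rearrangement, and the continuous extension to $\R^d$ is standard.

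Two small points of precision worth flagging. First, you should make explicit that $\distSZ(0,y)$ is a.s.\ finite (it is, since $\set_\infty$ is a.s.\ connected by \eqref{eq:Sinfty}), otherwise the first term in your triangle-inequality display is not obviously $o(n)$. Second, when you apply Lemma~\ref{l:chemdistball:crudebound} to get $\P^u[\distSZ(nx,nx+y)>\varepsilon n]\le \chemconstt e^{-c(\log n)^{1+\constS}}$, you should note that this requires choosing the radius $R$ in the lemma to scale like $\varepsilon n/\chemconstt$ (so that $y\in\ballZ(nx,R)$ for all large $n$), which is implicit but not spelled out; the dependence $c=c(u,y,\varepsilon)$ you write is really a dependence on $u$ together with a threshold in $n$ depending on $y,\varepsilon$.
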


\begin{proof}
The proof is similar to the analogous result about the chemical distance on the unique infinite cluster of
supercritical Bernoulli percolation in \cite[Section 5, Remark (g)]{GM90}.
We use \cite[Theorem 1]{kingman} to deduce \eqref{subergodic_limit}.
In accordance with the notation of \cite{kingman}, we fix $u \in (a,b)$ and $x \in \Z^d$ and define 
\[
x_{m,n}:=\distSZ (m  x,\, n  x), \quad m \leq n \in \N.
\] 
Now we check that  the family $(x_{m,n} \, : \; m \leq n \in \N)$ of 
random variables is a \emph{subadditive process} for $\mathbb P^u$, i.e., it satisfies
\cite[Conditions $\mathbf{S}_1,\mathbf{S}_2,\mathbf{S}_3$]{kingman}.
In our setting, these conditions correspond to 
\eqref{triangle_subadditive}, \eqref{distances_translation_inv}, and \eqref{expected_chem_dist_finite} below.

For $l\leq m \leq n \in \N$ the subadditivity condition 
 \begin{equation}\label{triangle_subadditive}
 x_{l,n} \leq x_{l,m} + x_{m,n}
\end{equation}
follows from the triangle inequality for $\distSZ(\cdot,\cdot)$.

By \p{} and \eqref{translation_inv_psi}, the joint law of
$\distSZ$-distances of the vertices of $\Z^d$ under $\P^u$ is translation invariant, 
in particular, 
\begin{equation}\label{distances_translation_inv}
\text{ the joint distribution of $(x_{m+1,n+1} \, : \; m \leq n \in \N)$ is
the same as that of $(x_{m,n} \, : \; m \leq n \in \N)$,}
\end{equation}
moreover, the above shift is ergodic by \p{}.
For all $n \in \N$ we have
\begin{equation}\label{expected_chem_dist_finite}
0 \leq \mathbb E^u [ x_{0,n} ] = \sum_{r=0}^{\infty} \mathbb P^u [\distSZ (0,\, n  x)>r ]
 \stackrel{ \eqref{useful_for_referees_expect_remark} }{<} \infty.
\end{equation}
Having checked that
$(x_{m,n} \, : \; m \leq n \in \N)$  is a subadditive process for $\mathbb P^u$, 
we can use \cite[Theorem 1]{kingman} to obtain \eqref{subergodic_limit} with
\begin{equation}\label{ergodic_limit_inf}
 p_u(x) =
\inf_{n \in \N}  \frac{1}{n}~ \mathbb E^u[ \distSZ(0,n x) ]
=
\lim_{n \to \infty} \frac{1}{n}~ \mathbb E^u[ \distSZ(0,n x) ].
\end{equation}

Now we extend $p_u(\cdot)$ from $\Z^d$ to $\R^d$, so that $p_u(\cdot)$ becomes a norm on $\R^d$.
We first observe that the identity $p_u(n  x)=n  p_u(x)$ follows from \eqref{subergodic_limit}  
 for all $x \in \Z^d$ and $n \in \N$. This allows us to consistently extend $p_u(\cdot)$ to 
 $\mathbb{Q}^d$ by letting $p_u(x)=\frac{1}{n} p_u( n  x)$ for any $x \in \mathbb{Q}^d$,
where $n \in \N$ is chosen so that $n x \in \Z^d$.

By \p{},
 the triangle inequality for $\distSZ(\cdot,\cdot)$,  and \eqref{ergodic_limit_inf},
we obtain 
\begin{equation}\label{norm_properties_opposite_subadd}
 p_u(-x)=p_u(x),
 \qquad p_u(x+y) \leq p_u(x)+p_u(y), \qquad    x,y \in \mathbb{Q}^d.
\end{equation}
By \eqref{psi_y_close_to_y}, \eqref{chemdist_restriction}, \eqref{expected_chem_dist_finite},
 \eqref{ergodic_limit_inf}, and \eqref{norm_properties_opposite_subadd}, we have 
\begin{equation}\label{norm_properties_bounded_above_below}
 |x|_1 \leq p_u(x) \leq \max_{1\leq i \leq d}  \mathbb{E}[ \distSZ(0,e_i) ]   \cdot  |x|_1,
\qquad x \in \mathbb{Q}^d.
\end{equation}
Thus 
 $p_u(\cdot)$ is a norm on $\mathbb{Q}^d$, which admits a unique
 continuous extension to $\R^d$.
\end{proof}

In the sequel we denote by $\ballR(x,r)$ the closed $L_{\infty}$-ball in $\R^d$ with
 radius $r \in \R_+$ and center $x \in \R^d$.

Note that with the above definitions we have 
$\ballZ(x,r)= \Z^d \cap \ballR(x,r)$ for any $x \in \Z^d$ and $r \in \R_+$.

\begin{proof}[Proof of Proposition \ref{prop:shapeThm_ext}]
Let $u\in(a,b)$. We define the compact convex set 
\begin{equation}\label{def_eq_shape_set}
 D_u := \{x \in \R^d : p_u(x) \le 1\}
 \end{equation}
using the norm $p_u(\cdot)$ from Lemma \ref{lemma_norm}.
We will show that \eqref{eq:shapeThm_ext_1} and \eqref{eq:shapeThm_ext_2} hold with this choice of $D_u$.

\medskip

We first show the inclusion \eqref{eq:shapeThm_ext_1}. 
Recall the constant $\chemconstt$ appearing in \eqref{eq:chemdistball:crudebound}.
For every $0<\varepsilon<1$ we choose $k=k(\varepsilon) \in \N$ and
$x_1, \ldots, x_k \in \mathbb Q^d \cap (1-\varepsilon) D_u$ such that
\begin{equation} \label{eq:DuCover}
 (1-\varepsilon) D_u \subseteq \bigcup_{j=1}^k \ballR \left(x_j,\frac{\varepsilon}{3 \chemconstt}\right).
\end{equation}
We fix $n \in \N$ such that $n  x_j \in \Z^d$ for all $1 \le j \le k$ and define
\begin{equation} \label{def_eq_x_L_j}
x_j^\shaperad = \left\lfloor \frac{\shaperad}{n} \right\rfloor \cdot n  x_j \in \Z^d, \quad
 1 \leq j \leq k, \quad R \in \N. 
 \end{equation}
Note that we have $\frac{1}{R} x^\shaperad_j \to x_j$ as $\shaperad \to \infty$ for every
$1\leq j\leq k$, thus  there is a constant $C(\varepsilon)$ such that
\begin{equation}\label{trivi_approx_rationals}
 \forall \,
\shaperad \geq C(\varepsilon), \; 1 \leq j \leq k 
 \; : \; 
 \Z^d \cap \ballR \left(\shaperad x_j,\frac{\varepsilon}{3 \chemconstt} \shaperad \right)
\subseteq 
\ballZ \left(x^\shaperad_j,\frac{\varepsilon}{2 \chemconstt} \shaperad \right).
\end{equation}

From Lemma \ref{lemma_norm} we obtain
\begin{equation*}
\forall \; 1 \leq j \leq k  \; : \;
 \lim_{\shaperad \to \infty} \frac{1}{\shaperad} \distSZ \left(0,\,  x^\shaperad_j   \right)
\stackrel{\mathbb P^u\text{-a.s.}}{=} \frac{1}{n}p_u(nx_j) =  p_u(x_j) 
\stackrel{\eqref{def_eq_shape_set}}{\leq} 1-\varepsilon.
\end{equation*}
Thus, there exists a 
$\P^u$-almost surely finite random variable $\tilde{\shaperad}^1_{\varepsilon,u}$ such that (recall \eqref{def:ballSZ})
\begin{equation} \label{eq:xjDist}
  \forall \,
\shaperad \geq \tilde{\shaperad}^1_{\varepsilon,u}, \; 1 \leq j \leq k 
 \; : \; 
  x^\shaperad_j \;  \in \;
 \ballSZ \left(0,(1-\frac{\varepsilon}{2})\shaperad \right).
\end{equation}
By \eqref{eq:chemdistball:crudebound} and the Borel-Cantelli lemma,
there exists a 
$\P^u$-almost surely finite random variable $\tilde{\shaperad}^2_{\varepsilon,u}$ such that
\begin{equation}\label{eq:borel_cantelli_balls}
 \forall \,
\shaperad \geq \tilde{\shaperad}^2_{\varepsilon,u}, \; 1 \leq j \leq k 
 \; : \; 
 \ballZ \left(   x^\shaperad_j, \, \frac{\varepsilon}{2 \chemconstt} \shaperad \right) \subseteq
\ballSZ \left(   x^\shaperad_j, \, \frac{\varepsilon}{2} \shaperad \right).
\end{equation}
 Now we are ready to conclude the proof of \eqref{eq:shapeThm_ext_1}. We have that $\mathbb P^u$-almost surely, 
\begin{multline}\label{eq:proof_of_shape_ext_1}
 \forall \,
\shaperad \geq \max \{\tilde{\shaperad}^1_{\varepsilon,u}, \tilde{\shaperad}^2_{\varepsilon,u}, C(\varepsilon) \}
 \; : \; 
\Z^d \cap (1-\varepsilon)\shaperad \cdot D_u 
\stackrel{\eqref{eq:DuCover}}{\subseteq}
\Z^d \cap \bigcup_{j=1}^k \ballR \left(\shaperad x_j,\frac{\varepsilon}{3 \chemconstt} \shaperad \right) \\
\stackrel{\eqref{trivi_approx_rationals}}{\subseteq} 
\bigcup_{j=1}^k
 \ballZ \left(  x^\shaperad_j, \, \frac{\varepsilon}{2 \chemconstt} \shaperad \right)
\stackrel{\eqref{eq:borel_cantelli_balls}}{\subseteq} 
\bigcup_{j=1}^k \ballSZ\left(   x^\shaperad_j, \, \frac{\varepsilon}{2} \shaperad \right)
\stackrel{\eqref{eq:xjDist}}{\subseteq}  \ballSZ (0,\shaperad).
\end{multline}

It remains to show the inclusion \eqref{eq:shapeThm_ext_2}.
We first note that by \eqref{norm_properties_bounded_above_below} and \eqref{def_eq_shape_set}, 
\[
D_u \subseteq \ballR(0,1).
\]
For every $0<\varepsilon<1$,  there exists $k \in \N$ and
 $x_1, \ldots, x_k \in \mathbb Q^d \cap \ballR(0,2) \setminus (1+\varepsilon) D_u$ such that
\begin{equation}\label{eq:DuCover_ketto}
\ballR(0,2) \setminus (1+\varepsilon) D_u \subseteq \bigcup_{j=1}^k 
\ballR \left(x_j,\frac{\varepsilon}{3 \chemconstt}\right).
\end{equation}
Let us fix $n \in \N$ such that $n  x_j \in \Z^d$ for all $1 \le j \le k$. 
Define $x_j^\shaperad$ by the formula \eqref{def_eq_x_L_j}.
Similarly to \eqref{trivi_approx_rationals}, \eqref{eq:xjDist} and \eqref{eq:borel_cantelli_balls} 
there exists a constant $C'(\varepsilon)< \infty$ and 
$\P$-almost surely finite random variables $\tilde{\shaperad}^3_{\varepsilon,u}$, 
$\tilde{\shaperad}^4_{\varepsilon,u}$ such that
\begin{equation}\label{trivi_approx_rationals_ketto}
 \forall \,
\shaperad \geq C'(\varepsilon), \; 1 \leq j \leq k 
 \; : \; 
 \Z^d \cap \ballR \left(\shaperad x_j,\frac{\varepsilon}{3 \chemconstt} \shaperad \right)
\subseteq 
\ballZ \left(x^\shaperad_j,\frac{\varepsilon}{2 \chemconstt} \shaperad \right),
\end{equation}
\begin{equation} \label{eq:xjDist_ketto}
  \forall \,
\shaperad \geq \tilde{\shaperad}^3_{\varepsilon,u}, \; 1 \leq j \leq k 
 \; : \; 
  x^\shaperad_j \; \in \; 
\Z^d \setminus \ballSZ\left(0,(1+\frac{\varepsilon}{2})\shaperad \right),
\end{equation}
\begin{equation}\label{eq:borel_cantelli_balls_ketto}
 \forall \,
\shaperad \geq \tilde{\shaperad}^4_{\varepsilon,u}, \; 1 \leq j \leq k 
 \; : \; 
 \ballZ \left(   x^\shaperad_j, \, \frac{\varepsilon}{2 \chemconstt} \shaperad \right) \subseteq
\ballSZ \left(   x^\shaperad_j, \, \frac{\varepsilon}{2} \shaperad \right).
\end{equation}
Similarly to \eqref{eq:proof_of_shape_ext_1}, we obtain that $\mathbb P^u$-almost surely, 
\begin{multline}\label{complements_inclusion}
\forall \,
\shaperad \geq \max \{\tilde{\shaperad}^3_{\varepsilon,u}, \tilde{\shaperad}^4_{\varepsilon,u}, C'(\varepsilon) \}
 \; : \; 
  \ballZ(0,2\shaperad) \setminus (1+\varepsilon)\shaperad \cdot D_u   
\stackrel{ \eqref{eq:DuCover_ketto} }{\subseteq} 
\Z^d \cap \bigcup_{j=1}^k \ballR \left(\shaperad x_j,\frac{\varepsilon}{3 \chemconstt} \shaperad \right)
\stackrel{\eqref{trivi_approx_rationals_ketto}}{\subseteq}
\\
 \bigcup_{j=1}^k
\ballZ \left(  x^\shaperad_j, \, \frac{\varepsilon}{2 \chemconstt} \shaperad \right)
\stackrel{\eqref{eq:borel_cantelli_balls_ketto}}{\subseteq} 
\bigcup_{j=1}^k \ballSZ \left(   x^\shaperad_j, \, \frac{\varepsilon}{2} \shaperad \right)
\stackrel{\eqref{eq:xjDist_ketto}}{\subseteq} \Z^d \setminus 
\ballSZ (0,\shaperad).
\end{multline}

By  \eqref{psi_y_close_to_y}, \eqref{chemdist_restriction},
 and the Borel-Cantelli lemma, we obtain that
there exists a 
$\P^u$-a.s.\ finite random variable $\tilde{\shaperad}^5_{\varepsilon,u}$ such that
\[
\forall \,
\shaperad \geq \tilde{\shaperad}^5_{\varepsilon,u} \; : \; 
\ballSZ(0, \shaperad) \subseteq \ballZ(0, 2\shaperad).
\] 
If we combine this observation with \eqref{complements_inclusion}, we obtain 
\eqref{eq:shapeThm_ext_2}, which concludes the proof of  Proposition \ref{prop:shapeThm_ext}
and Theorem \ref{thm:shapeThm}.
\end{proof}

\paragraph{Acknowledgements.}
A large part of the work on this paper was done while the authors were at ETH Z\"urich. 
During that time, the research of AD was supported by an ETH Fellowship, and  
the research of BR and AS was supported by the grant ERC-2009-AdG 245728-RWPERCRI.
We thank an anonymous referee for reading and commenting on the manuscript.


\begin{thebibliography}{99}


\bibitem{frog}
O. S. M. Alves, F. P. Machado and S. Yu Popov (2002)
The shape theorem for the frog model.
{\it Ann. Appl. Probab.} {\bf 12} (2), 533--546.


\bibitem{antal_pisztora}
P. Antal and A. Pisztora (1996) On the chemical distance for supercritical Bernoulli percolation.
{\it Ann. Probab.} {\bf 24} (2), 1036--1048. 

\bibitem{Barlow}
M. T. Barlow (2004) Random walks on supercritical percolation clusters. 
{\it Ann. Probab.} {\bf 32}, 3024--3084.


\bibitem{BenjaminiSznitman}
I. Benjamini and A.-S. Sznitman (2008) 
Giant component and vacant set for random walk on a discrete torus. 
{\it J. Eur. Math. Soc.} {\bf 10} (1), 133--172.

\bibitem{BergerBiskup}
N. Berger and M. Biskup (2007) 
Quenched invariance principle for simple random walk on percolation cluster. 
{\it Probab. Theory Rel. Fields} {\bf 137}, 83--120.

\bibitem{BLM}
J. Bricmont, J. L. Lebowitz and C. Maes (1987) 
Percolation in strongly correlated systems: the massless Gaussian field. 
{\it J. Stat. Phys.} {\bf 48} (5/6), 1249--1268.


\bibitem{broadbent_hammersley}
S. R. Broadbent and J.M. Hammersley (1957)
Percolation processes I. Crystals and mazes.
{\it Proc. Camb. Phil. Soc.} {\bf 53}, 629--641. 

\bibitem{CP}
J. \v{C}ern\'y and S. Popov (2012) 
On the internal distance in the interlacement set. 
{\it Electron. J. Probab.} {\bf 17}, no. 29, 1--25.

\bibitem{DRS}
A. Drewitz, B. R\'ath and A. Sapozhnikov (2012) 
Local percolative properties of the vacant set of random interlacements with small intensity. 
{\it (to appear in Ann. de l'Inst. Henri Poincar\'e)}
arXiv:1206.6635.
 
\bibitem{DunfordSchwartz}
N. Dunford and J.T. Schwartz (1958) 
{\it Linear operators}, Volume 1, Wiley-Interscience, New York.
 
\bibitem{garet_marchand}
O. Garet  and R. Marchand (2004)
Asymptotic shape for the chemical distance and first-passage percolation on the infinite Bernoulli cluster.
{\it ESAIM: Probab. Stat.} {\bf 8}, 169--199.


\bibitem{Gartner_Molchanov}
J. G\"artner and S. A. Molchanov (1990). 
Parabolic problems for the Anderson model.
{\it Comm. Math. Phys.} {\bf 132}, 613--655.


\bibitem{GM90}
G. R. Grimmett and J.M. Marstrand (1990) The supercritical phase of percolation is well behaved. 
{\it Proc. Roy. Soc. London Ser. A} {\bf 430}, 439--457.


 \bibitem{Grimmett}
 G. R. Grimmett (1999) {\it Percolation.} Springer-Verlag, Berlin, second edition.

 \bibitem{GrimmettCluster}
G. R. Grimmett (2006) {\it The random-cluster model.} Springer-Verlag, Berlin.

\bibitem{kingman}
J. F. C. Kingman (1973)
Subadditive ergodic theory.
{\it Ann. Probab.} {\bf 1}, 883--909. 

\bibitem{LT}
H. Lacoin and J. Tykesson (2013) 
On the easiest way to connect $k$ points in the random interlacements process. 
{\it ALEA} {\bf 10(1)}, 505--524.

\bibitem{LSS}
T. M. Liggett, R. H. Schonmann, and A. M. Stacey (1997)
Domination by product measures. {\it Ann. Probab.} {\bf 25}, 71--95.


\bibitem{MathieuRemy}
P. Mathieu and E. Remy (2004) 
Isoperimetry and heat kernel decay on percolation clusters. 
{\it Ann. Probab.} {\bf 32}, 100--128.

\bibitem{MathieuPiatnitski}
P. Mathieu and A. L. Piatnitski (2007) 
Quenched invariance principles for random walks on percolation clusters. 
{\it Proceedings of the Royal Society A.} {\bf 463}, 2287--2307. 
 

\bibitem{Okamura}
K. Okamura (2013)
Large deviations for simple random walk on percolations with long-range correlations.
arXiv:1308.2282.


\bibitem{PR}
S. Popov and B. R\'ath (2013)
On decoupling inequalities and percolation of excursion sets of the Gaussian free field.
arXiv:1307.2862.

 
\bibitem{PoTe}
S. Popov and A. Teixeira (2012)
Soft local times and decoupling of random interlacements.
{\it (to appear in the J. of the Eur. Math. Soc.)}
arXiv:1212.1605.

\bibitem{PrShel}
E. Procaccia and E. Shellef (2012)
On the range of a random walk in a torus and random interlacements. 
{\it (to appear in Ann. Probab.)}
arXiv:1007.1401v2.
 

\bibitem{PT}
E. Procaccia and J. Tykesson (2011) Geometry of the random interlacement. 
{\it Electron. Commun. Probab.} {\bf 16}, 528--544.

\bibitem{PRS_isoperimetric}
E. Procaccia, R. Rosenthal and A. Sapozhnikov (2013)
Quenched invariance principle for simple random walk on clusters in correlated percolation models.
arXiv:1310.4764


\bibitem{RS}
B. R\'ath and A. Sapozhnikov (2012) 
Connectivity properties of random interlacement and intersection of random walks. 
{\it ALEA} {\bf 9}, 67--83.

\bibitem{RS:Transience}
B. R\'ath and A. Sapozhnikov (2011) On the transience of random interlacements. 
{\it Electron. Commun. Probab.} {\bf 16}, 379--391.
 
\bibitem{RS:Disordered} 
B. R\'ath and A. Sapozhnikov (2011) The effect of small quenched noise on connectivity properties of random interlacements. 
{\it Electron. J. of Prob.} {\bf 18} (4), 1--20.

\bibitem{RodSz}
P. F. Rodriguez and A.-S. Sznitman (2013)
Phase transition and level-set percolation for the Gaussian free field. 
{\it Commun. Math. Phys.} {\bf 320}, 571--601.



\bibitem{SidoraviciusSznitman_RW}
V. Sidoravicius and A.-S. Sznitman (2004) 
Quenched invariance principles for walks on clusters of percolation or among random conductances. 
{\it Prob. Theory Rel. Fields} {\bf 129}, 219--244.

\bibitem{SidoraviciusSznitman_RI}
V. Sidoravicius and A.-S. Sznitman (2009) Percolation for the vacant set of random interlacements.
{\it Comm. Pure Appl. Math.} {\bf 62} (6), 831--858.

\bibitem{SznitmanAM}
A.-S. Sznitman (2010) Vacant set of random interlacements and percolation. 
{\it Ann. Math.} {\bf 171} (2), 2039--2087.
 
\bibitem{Sznitman:Decoupling}
A.-S. Sznitman (2012) 
Decoupling inequalities and interlacement percolation on $G\times \Z$. 
{\it Invent. Math.} {\bf 187} (3), 645--706.

\bibitem{Teixeira_AAP}
A. Teixeira (2009) On the uniqueness of the infinite cluster of the vacant set of random interlacements.
{\it Ann. Appl. Probab.} {\bf 19} (1), 454--466. 

\bibitem{Teixeira}
A. Teixeira (2011) On the size of a finite vacant cluster of random interlacements with small intensity. 
{\it Probab. Theory Related Fields} {\bf 150} (3--4), 529--574.
 
\bibitem{teixeira_windisch}
A. Teixeira and D. Windisch (2011) 
On the fragmentation of a torus by random walk. 
{\it Comm. Pure Appl. Math.} {\bf 64} (12), 1599--1646.

\bibitem{windisch_torus}
D. Windisch (2008) Random walk on a discrete torus and random interlacements. 
{\it Electron. Commun. Probab.} {\bf 13}, 140--150.
 
\end{thebibliography}
\end{document}